\numberwithin{equation}{section}
\newtheorem{theorem}{Theorem}[section]
\newtheorem{definition}[theorem]{Definition}
\newtheorem{lemma}[theorem]{Lemma}
\newtheorem{assumption}[theorem]{Assumption}
\newtheorem{proposition}[theorem]{Proposition}
\newtheorem{corollary}[theorem]{Corollary}
\newtheorem{example}[theorem]{Example}
\newtheorem{remark}[theorem]{Remark}
\newcommand{\R}{\mathbb{R}}
\newcommand{\N}{\mathbb{N}}
\newcommand{\LSC}{\mathrm{LSC}}
\newcommand{\kk}{\mathscr{k}}
\date{\today}
\begin{document}

\begin{center}
{\LARGE On (discounted) global Eikonal equations in metric spaces}

\bigskip

\textsc{Tr\'i Minh L\^e \& Sebasti\'an Tapia-Garc\'ia}

\bigskip \textsc{\today}
\end{center}

\bigskip

\noindent\textbf{Abstract.} Eikonal equations in metric spaces have strong connections with the local slope operator (or the De Giorgi slope). 
In this manuscript, we explore and delve into an analogous model based on the global slope operator, expressed as $\lambda u + G[u] = \ell$, where $\lambda \geq 0$. 
In strong contrast with the classical theory, the global slope operator relies neither on the local properties of the functions nor on the structure of the space, and therefore new insights are developed in order to analyze the above equation.
Under mild assumptions on the metric space $X$ and the given data $\ell$, we primarily discuss: $(a)$ the existence and uniqueness of (pointwise) solutions; $(b)$ a viscosity perspective and the employment of Perron's method to consider the maximal solution; $(c)$ stability of the maximal solution with respect to both, the data $\ell$ and the discount factor $\lambda$.
Our techniques provide a method to approximate solutions of Eikonal equations in metric spaces and a new integration formula based on the global slope of the given function. 
\bigskip

\noindent\textbf{Key words.} Global slope, Eikonal equation, Metric space, Well-posedness. 

\vspace{0.6cm}

\noindent\textbf{AMS Subject Classification} \ \textit{Primary} 35F21, 30L99 \
\textit{Secondary} 35F30, 49L25.

%%%%%%%%%%%%%%%%%%%%%%%

\tableofcontents

\section{Introduction}
%%%%%%%%%%%
Let $\Omega \subset \R^n$ be a nonempty open bounded set and let $\ell: \Omega \to (0, + \infty)$  be a continuous function. The Eikonal equation reads as follows
    \begin{align}\tag{$\EuScript{E}$}\label{eqn.eikonal}
        \begin{dcases}
            \| Du(x) \| = \ell(x),& \text{for all } x \in \Omega, \\
            u = 0, &\text{for all }x \in \partial \Omega.
        \end{dcases}
    \end{align}
This equation has been extensively studied in several settings (Hilbert and Banach spaces~\cite{CL_1985, CL_1986}) because of its many applications.
It is an important model of the larger family of the so-called Hamilton--Jacobi equations (HJEs for short), all of them having the form $H(x, u, Du)=0$. 
The notion of viscosity solutions has been developed to remedy the possible lack of classical solutions. It became essential in the study of HJEs and, in particular of the Eikonal equations.

%Since the literature on this subject is vast, by no means our bibliography can be exhaustive. 
%Also, Perron's method is a useful tool to prove the existence of viscosity solutions [??]. 
\medskip

During the last decades, there has been a growing effort to generalize equation~\eqref{eqn.eikonal} to a purely metric setting motivated by applications in  Optimal Transportation, Mean Field Games and Optimal Control on Topological Networks~\cite{ACCT_2013, AGS_2013, CCM_2016, C_2013, IMZ_2013, SC_2013}. 
In the literature, we can find several notions of solutions of HJEs defined on metric spaces. Some of them are based on the notion of viscosity solution. 
Most notably, according to \cite{LSZ_2021}, slopes-- and curve--based solutions are defined and studied in~\cite{AF_2014, GS_2015, GHN_2015}.
In~\cite{LSZ_2021}, assuming that the underlying metric space is complete and length, and under some extra continuity conditions on the data $\ell$, it is shown that these two notions coincide with Monge solutions. 
Moreover, Monge solutions of metric HJEs with discontinuous Hamiltonians have been addressed in~\cite{BD_2005, EGV_2024, LSZ_2024,NS_1995}.
This notion of solution does not require any test functions.
Indeed, Monge solutions for the Eikonal equation in metric spaces are pointwise solutions of what we call the local slope equation, based on the local slope operator (or De Giorgi slope). 
That is, for a metric space $X$ and a continuous function  $\ell: X \to [0, +\infty)$, we search for a function $u:X\to \R \cup \{ + \infty \}$ that pointwise satisfies
    \begin{equation}\label{eq.loca Eiko}
        \begin{dcases}
            s[u](x) = \ell(x), & \text{ for all } x \in X, \\
            u = 0, & \text{ for all } x \in [\ell = 0],
        \end{dcases}
    \end{equation}
where the local slope operator is defined by
    \begin{align*}
        s[u](x) := 
        \begin{dcases}
            \limsup_{y \to x} \dfrac{(u(x) - u(y))_+}{d(x, y)}, & \text{ if $x$ is not isolated, } \\
            \phantom{arisdaniilidis}0, & \text{ otherwise,}
        \end{dcases}
    \end{align*}
where $\alpha_+ = \max \{ \alpha, 0 \}$ for any $\alpha \in \R$. The convention $s[u](x)=+\infty$ if $u(x)=+\infty$ is used. 

\medskip

The local slope has been studied in depth as an upper gradient that generalizes the norm of the derivative of a differentiable function~\cite{AGS_2013}. 
Another natural upper gradient is the so--called global slope. That is, 
    \begin{align*}
        G[u](x) = \begin{cases}
            \displaystyle\sup_{y \neq x} \dfrac{(u(x) - u(y))_+}{d(x, y)},&\text{if } u(x)< + \infty,\\
            +\infty,& \text{otherwise}.
        \end{cases}
    \end{align*}  
Clearly, $G[u]\geq s[u]$ for any function $u$, and $G[u]=s[u]$ when $u$ is convex. 
Further information on upper gradients can be found in~\cite{AGS_2013} and references therein. 

\medskip

The main purpose of this manuscript is to study the (discounted) global Eikonal equation, see equation~\eqref{eqn: Gs} in Section~\ref{sec: global slope}.
Let us focus on the global Eikonal equation without discount term: 
	\begin{equation}\tag{$\EuScript{G}_{0}$}\label{eqn: G0}
    	\begin{dcases}
    		G[u](x)=\ell(x),&~\text{for all }x\in X,\\
    		\inf_{x\in X} u(x) = 0,&
    	\end{dcases}
	\end{equation}
where $\ell: X \to [0, + \infty)$ is a lower semicontinuous (lsc for short) function such that $\inf_X \ell = 0$. 
A lsc function $u: X \to [0, + \infty)$ with $\inf_X u =0$ is called a solution of~\eqref{eqn: G0} if $G[u]=\ell$ holds pointwise. 
Note that the above equation can be a model for phenomena in which the underlying metric space has no length structure, for instance, in graphs or in fractal domains. In the Euclidean setting, an equation involving a global slope-like operator appeared in the study of H\"older infinity Laplacians \cite{CLM_2013}. 
\medskip

Let us discuss about the setting of equation~\eqref{eqn: G0}.
The assumption of lower semicontinuity of $\ell$ follows from the fact that the global slope of a lsc function is lsc, Propostion~\ref{prop: 22}. 
A natural boundary condition for the Eikonal equation is the so-called Dirichlet condition.
However, this type of condition is not enough to ensure the uniqueness of solutions for global slope equations. 
Indeed, observe that the functions $f, g, h : \R \to \R$ defined by $f(x) = x$, $g(x) = - x$ and $h(x) = \min \{ x, -x\}$ satisfy $G[u] = 1$ in $\R$ and $u(0) = 0$.
Surprisingly, in \cite{DLS_2023, IZ_2023, TZ_2023}, it is shown that the equation \eqref{eqn: G0} admits at most one (bounded from below) solution. 
This explains that our boundary condition reads as $\inf_X u=0$ (i.e. we search for functions that are bounded from below).
Also, a simple consequence of Ekeland's principle shows that the infimum of the global slope of a bounded from below function, which is defined on a complete metric space, is $0$. 
Therefore, the assumption $\inf_X \ell = 0$ is a necessary condition for the existence of solutions when the underlying space is complete.
%Some comments on the related Dirichlet problem will be discussed in Section \ref{sec:fur-com}.

\medskip

To construct a solution of~\eqref{eqn: G0}, we study the asymptotic behavior of the semigroup defined by the operator
	\begin{equation}\label{eqn.LO}
		T_{\ell}\,u(x) := \inf \, \left\{ u(y) + \ell(x) d(x, y): y \in X \right\}, \text{ for all } x \in X. 
	\end{equation}

\medskip

In the language of discrete weak KAM theory  \cite{Z_2010, Z_2012}, the operator $T_\ell$ corresponds to a (degenerated) discrete Lax--Oleinik semigroup associated with the equation~\eqref{eqn: G0}. 
In the mentioned theory, the term $\ell(x)d(x, y)$ in the definition of $T_\ell$ is replaced by a \textit{continuous} cost function $c: X \times X \to \R$. Moreover, suitable assumptions are made on the space $X$. 
This theory has been developed as a discrete model of the weak KAM theory~\cite{F_2008}. 
We would like to emphasize that, for existence of solutions of our equation, no assumptions on the space $X$ are made. 

\medskip

An important portion of this work is devoted to analyzing the existence of solutions, the uniqueness issue, and the stability of equation~\eqref{eqn: Gs}, which reads as follows: $\lambda u + G[u] = \ell$. 
Concerning the existence of viscosity solutions of HJEs in the classical setting, there are two main approaches to build a solution: constructing the value function of a related control problem~\cite{BC-D_97} and the Perron's method~\cite{I_1987_Perron}. 
Note that usually, Perron's method does not give a formula to compute the provided solution.  
In the metric setting, for the local slope equation, several authors have considered a solution that is obtained through the value function of a control problem among absolutely continuous curves, addressing as well the uniqueness issue for their respective equations~\cite{AF_2014, GS_2015, GHN_2015}. 
In the language of determination theorems, uniqueness results have been tackled for slope--like operators, with the extra assumption that the functions are bounded from below.
Indeed, in \cite{BCD_2018} it is shown that two $\mathcal{C}^2$--smooth convex functions (which are bounded from below) defined on a Hilbert space coincide up to an additive constant if and only if their local slopes coincide (if and only if their global slopes coincide). This determination result was generalized threefold: from smooth convex functions to merely lsc functions, from Hilbert spaces to complete metric spaces, and from local slope to general descent modulus \cite{DLS_2023, DMS_2022, IZ_2023, PSV_2021, TZ_2023}. 
As a direct consequence of \cite[Theorem 1]{IZ_2023}, if the underlying space is complete, there is at most one solution of~\eqref{eqn: G0}.

\medskip

Coming back to the classical framework of HJEs, some classical stability results are provided in~\cite{CL_1983, CL_1986}. 
In the metric framework, we can also find stability results for metric HJEs in~\cite{GHN_2015, NN_2018}. 
In the convex case, roughly speaking, it has been shown that the difference between two functions on a bounded set is controlled by the $\mathcal{L}^\infty$-norm of the difference of their local slopes on that set (equivalently of their global slopes)~\cite{DD_2023}.
Further, for lsc convex functions defined on finite-dimensional spaces, the stability under the $\Gamma$--convergence of the slope is established in~\cite{DST-G_2024}.  

\medskip

A third approach to solve HJEs of the form $H(x, Du) = 0$ is the so--called ergodic approximation method, which was introduced in \cite[Section 2]{LPV_1987} to study the cell problem that arises in the periodic homogenization of HJEs. 
This method considers an auxiliary equation involving a discount factor: $\lambda u + H(x, Du) = 0$, where $\lambda > 0$, and then studies the asymptotic behavior of the solutions as $\lambda$ tends to $0^+$. 
In the framework of weak KAM theory, the ergodic approximation method was employed in~\cite{DFIZ_2016_invent} where the underlying space is a smooth compact connected manifold.  In the compact metric setting, a notion of discounted Lax--Oleinik semigroup was introduced in~\cite{DFIZ_2016}, in which the discount factor $\lambda$ is plugged directly into the definition of the discrete Lax--Oleinik semigroup. 
Indeed, in the aforementioned work, the authors considered the operator 
    \begin{align*}
        \mathcal{T}_{\lambda}u(x) = \inf_{y \in X} \,  \lambda u(y) + c(x, y), \text{ for all } x \in X,
    \end{align*}
and proved that the $\lambda$--discrete weak KAM solutions $\{ u_{\lambda} \}_{\lambda \in (0, 1)}$ (that is $\mathcal{T}_{\lambda}u_\lambda=u_\lambda+\beta$, for some $\beta\in \R$) converges to $u_1$ as $\lambda$ tends to $1^-$.
The above research stream was recently investigated in the context of Shapley operators in \cite{CGMQ_2024}.
We would like to point out that, since we apply the ergodic approximation method to the global slope equation, our \textit{discounted operator} $T_\lambda$ does not have the shape of a discounted Lax--Oleinik semigroup (as it does for the case $\lambda=0$). 
Indeed, as it is defined in \eqref{oper-T}, the natural operator associated with equation~\eqref{eqn: Gs} is given by
\[ 
    T_{\lambda}u(x) = \inf_{y \in X} \dfrac{u(y) + \ell(x)d(x, y)}{1 + \lambda d(x, y)}, \text{for every } x \in X.
\]
Moreover, for us, fix points of $T_\lambda$ only characterize subsolutions of our equation, Proposition~\ref{prop: T subsolution}.
Therefore, new insights are required to study the discounted global slope equation~\eqref{eqn: Gs}. 

\medskip

Fix $(X, d)$ a metric space and $\ell: X \to [0, + \infty)$ a lsc function such that $\inf_X \ell = 0$. 
The main contributions of this paper are the following:
\begin{enumerate}
    \item \textit{Existence of solutions in metric spaces}: Based on a control approach, in \Cref{thm.exists.full} we show that, if $\lambda>0$, the equation~\eqref{eqn: Gs} always admits a solution. 
    For the case $\lambda=0$, we derive a necessary condition \eqref{eq: hyp}, see Assumption \ref{ass.}, to obtain the existence of solutions. 
    More precisely, for any $\lambda \geq 0$, we show that $T^\infty_\lambda u := \lim_{n \to \infty} T^n_\lambda u$ is a solution of \eqref{eqn: Gs}, if $u$ is a supersolution of \eqref{eqn: Gs}, see Definition~\ref{def.P-solu}.   
    We point out that \eqref{eq: hyp} is also a sufficient condition if $X$ is assumed to be complete.    
    See Section~3.1.
    \item \textit{Uniqueness results}: Assume that $X$ is complete. 
    As stated above, the results from~\cite{IZ_2023} provide the uniqueness of the solution for equation~\eqref{eqn: G0}.
    For the case in which $\lambda>0$, the uniqueness of (bounded Lipschitz) solutions of equation~\eqref{eqn: Gs} is proved if $\ell$ is bounded. 
    We establish a comparison principle by using a transfinite induction inspired by the one employed in \cite{DLS_2023,DMS_2022} for abstract (metric compatible) descent modulus. 
    See Section 3.3. 
    \item \textit{The viscosity approach and Perron's method}: 
    We provide a notion of viscosity solution of equation~\eqref{eqn: Gs} and prove that it is equivalent to the notion of pointwise solution.
    We employ Perron's method to define the maximal solution $u_\lambda$ of equation~\eqref{eqn: Gs}, which allows us to fix a particular solution even in cases where there are multiple solutions. 
    Moreover, an explicit formula for the maximal solution is provided. 
    Indeed, if $\lambda > 0$, then $u_{\lambda} = T_\lambda^\infty \frac{\ell}{\lambda}$. On the other hand, if $\lambda = 0$ and assumption \eqref{eq: hyp} is satisfied, then 
        \begin{equation}\label{u_ley}
        u_0(x):=\inf\left\{\sum_{n=0}^\infty \ell(x_n)d(x_n,x_{n+1}):~\{x_n\}_n \in \mathcal{K}(x)\right\}, \text{ for every } x \in X,
        \end{equation}  
    where $\mathcal{K}(x) = \{ \{x_n\}_n \subset X: x_0=x,~\lim_{n \to \infty} \ell(x_n)=0 \}$. See \Cref{sec: viscosity}. 
    \item \textit{Stability result and ergodic approximation}: Assume that $X$ and $\ell$ are bounded. 
    We show that, for every $\lambda \geq 0$, the maximal solution of equation~\eqref{eqn: Gs} is $\mathcal{L}^\infty$--stable under uniform perturbation of the data $\ell$.
    Our method is based on proving a uniform bound of the length of the initial part of a discrete path (one may think on a partial sum of the series that appears in the formula~\eqref{u_ley}). 
    Intuitively, this bound reflects the finite length of gradient flows restricted to a set of large gradients.
    Moreover, using the $\mathcal{L}^\infty$--stability result for equation \eqref{eqn: G0}, we prove that
    \begin{itemize}[label = {$\bullet$}]
        \item $u_\lambda$ converges to $u_0$ uniformly as $\lambda$ tends to $0^+$;
        \item if $X$ is compact, then $u_\lambda$ converges to $u_\alpha$ uniformly as $\lambda$ tends to $\alpha$, for any $\alpha > 0$.
    \end{itemize}
    Furthermore, for any metric space, any lsc function $\ell$ and any $\alpha>0$, $u_{\lambda}$ converges to $u_{\alpha}$ pointwise as $\lambda$ tends to $\alpha^-$. See Section \ref{sec: stability}. 

    \item \textit{Approximations of a solution of the local slope equation~\eqref{eq.loca Eiko}}: 
    Assume that $[\ell = 0] \neq \emptyset$.
    Motivated by the explicit formula of the maximal solution of equation~\eqref{eqn: G0}, see~\eqref{u_ley}, we define a family of functions $\{ v_R \}_{R \in (0, + \infty)}$ that enjoys the following properties:
        \begin{itemize}[label = {$\bullet$}]
            \item if $X$ is connected, then $v_R$ converges pointwise to the maximal solution of~\eqref{eqn: G0} as $R$ tends to $+ \infty$;
            \item if $X$ is compact and length and $\ell$ is continuous, then $v_R$ converges uniformly to a solution of~\eqref{eq.loca Eiko} as $R$ tends to $0$. 
        \end{itemize}
    For $R > 0$, $v_R(x)$ is defined by the formula \eqref{u_ley} but replacing $\mathcal{K}(x)$ by 
        \begin{equation}
            \mathcal{K}_R(x) := \left\{ \{ x_n \}_{n = 0}^k \subset X: x_0 = x, x_{n + 1} \in B_R(x_n) \text{ for all } n < k \text{ and } x_k \in [\ell = 0] \right\}. 
        \end{equation}
    We also present an example to show that the second approximation result does not hold in general complete length spaces. 
    We would like to point out that a similar approximation scheme was already addressed in \cite{E-B_2023} but considering minimal weak upper gradients on metric measure spaces.
    See \Cref{subsec: local-global}. 
    \item \textit{Integration formula}: Assume that $X$ is complete. After extending our existence result for equation~\eqref{eqn: G0} to the case that $\ell$ admits the value $+ \infty$, we establish an integration formula that recovers lsc functions using only their global slopes. 
    More precisely, for a lsc function $f: X \to \R \cup \{ + \infty \}$ such that $\inf_X f = 0$, we have that
        \begin{align*}
            f = \mathrm{lsc}(\mathcal{I}[f]),
        \end{align*}
    where $\mathrm{lsc}(\mathcal{I}[f])$  is the lower semicontinuous envelope of the function $\mathcal{I}[f]$, which is defined by
        \begin{align*}
 	  \mathcal{I}[f](x) :=  \inf \left\{ \sum_{n = 0}^{+ \infty} G[f](x_n)d(x_n, x_{n + 1}): x_0 = x \text{ and } \lim_{n\to+\infty} G[f](x_n) = 0 \right\}.
 	\end{align*}
    This integration formula could be used on lsc convex functions to obtain a representation that differs from the one given by Rockafellar's integral. See Section~\ref{subsec: integration}.  
\end{enumerate}
%%%%%%%%%%%%%%%%%%%%%%%%%%%%%%%%%%%%
\section{Preliminaries}\label{sec: preliminaries}
Let $ (X, d)$ be a metric space (which may be complete or not). 
For any $x \in X$ and $\rho > 0$, we denote by $B_{\rho}(x)$ the open ball centered at $x$ with radius $\rho$. 
Given a function $u: X \rightarrow \R \cup \{ + \infty \}$ and $r \in \R$, the $r$--sublevel set (resp. $r$--level set) is denoted by    
    \begin{align*}
        [u \leq r] := \{ x \in X: u(x) \leq r \} \hspace{0.5cm}\text{ (resp. $[u = r] := \{ x \in X : u(x) = r \}$)}.
    \end{align*}
The (effective) domain of $u$ is denoted by $\mathrm{dom}~u = \{ x \in X: u(x) < + \infty \}$ and $u$ is called proper if $\mathrm{dom}~u \neq \emptyset$. We also consider the space of real-valued lsc functions on $X$:
    \begin{align*}
        & \LSC(X) = \left\{ u \in \R^X : \quad  u \text{ is lsc} \right\}.
    \end{align*}
%Following \cite{AGS_2008}, the global slope of $u$ at a point $x \in X$ is defined by
%    \begin{align*}
%        G[u](x) = \begin{cases}
%            \displaystyle\sup_{y \neq x} \dfrac{(u(x) - u(y))_+}{d(x, y)},&\text{if } u(x)<\infty,\\
%            +\infty,& \text{otherwise}.
%        \end{cases}
%    \end{align*}  
The following proposition collects some easy facts about the global slope. Its proof is left to the reader. 
\begin{proposition}\label{prop: 21}
    Let $u:X\to\R$ be a function and let $x\in X$. The following assertions hold true:
    \begin{itemize}
        \item If $G[u](x)<+\infty$, then $u$ is lsc at $x$.
        \item If $G[u]$ is everywhere finite, then $u\in \LSC(X)$.
        \item If $G[u]$ is locally bounded, then $u$ is locally Lipschitz.
    \end{itemize}
\end{proposition}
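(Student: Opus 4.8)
The plan is to derive all three assertions from one elementary observation: if $G[u](x) = M < +\infty$, then in particular $u(x) < +\infty$ (by the convention $G[u](x) = +\infty$ whenever $u(x) = +\infty$), and from the very definition of the global slope as a supremum over \emph{all} $y \neq x$ we get
\[
    \frac{(u(x) - u(y))_+}{d(x, y)} \le M \quad \text{for every } y \in X \setminus \{x\},
\]
hence $u(x) - u(y) \le M\, d(x,y)$, i.e. $u(y) \ge u(x) - M\, d(x,y)$, for all $y \neq x$. Note that the inequality is global, so it is in particular valid for all $y$ close to $x$.

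For the first bullet, fix $x$ with $G[u](x) =: M < +\infty$. If $x$ is isolated, lower semicontinuity of $u$ at $x$ is automatic, so assume $x$ is not isolated. From the displayed inequality, $u(y) \ge u(x) - M\, d(x,y)$ for all $y \neq x$; letting $y \to x$ and taking the $\liminf$ yields $\liminf_{y \to x} u(y) \ge u(x)$, which is exactly lower semicontinuity of $u$ at $x$. The second bullet is then immediate: if $G[u]$ is everywhere finite, then $u(x) < +\infty$ for every $x$ and, by the first bullet applied at each point, $u$ is lsc at every point; hence $u \in \LSC(X)$.

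For the third bullet, suppose $G[u]$ is locally bounded, and fix $x_0 \in X$; choose $\rho > 0$ and $M \ge 0$ with $G[u] \le M$ on $B_\rho(x_0)$ (in particular $u$ is finite on $B_\rho(x_0)$). Given $x, y \in B_\rho(x_0)$ with $x \neq y$, apply the displayed inequality once with center $x$ tested against $y$, and once with center $y$ tested against $x$, to obtain $u(x) - u(y) \le M\, d(x,y)$ and $u(y) - u(x) \le M\, d(x,y)$; therefore $|u(x) - u(y)| \le M\, d(x,y)$. Thus $u$ is $M$-Lipschitz on $B_\rho(x_0)$, and since $x_0$ was arbitrary, $u$ is locally Lipschitz.

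There is no genuine obstacle here; the only points that deserve a word of care are the bookkeeping around the convention ($G[u](x) < +\infty$ forcing $u(x) < +\infty$, and local boundedness of $G[u]$ forcing local finiteness of $u$) and the fact that, unlike the local slope, the supremum defining $G[u](x)$ runs over the whole space, so the one-sided Lipschitz estimates it produces are global and in particular usable at nearby points — which is precisely what makes the two-sided estimate in the third bullet work without shrinking the ball.
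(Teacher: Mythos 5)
Your proof is correct, and since the paper explicitly leaves the proof of this proposition to the reader, your argument — extracting the global one-sided estimate $u(y)\ge u(x)-M\,d(x,y)$ from $G[u](x)\le M$ and then using it once for lower semicontinuity and twice (symmetrized) for the local Lipschitz bound — is exactly the elementary argument the authors intend. No issues.
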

Note that the function $f:\R\to\R$ defined by $f(x)=x^3$ shows that the reciprocal of the above three implications are false in general. 
In the rest of the section, several facts of the global slope are provided.

\begin{proposition}\label{prop: 22}
    Let $u\in \LSC(X)$. 
    Then the function $G[u]:X\to\R\cup\{+\infty\}$ is lsc.
\end{proposition}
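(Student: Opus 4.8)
The plan is to show that for every $c \in \R$ the sublevel set $[G[u] \leq c]$ is closed, which is equivalent to lower semicontinuity. Fix $c \geq 0$ (for $c < 0$ the set is empty and there is nothing to prove) and take a sequence $x_k \to x$ with $G[u](x_k) \leq c$ for all $k$. The goal is to prove $G[u](x) \leq c$, i.e. $(u(x) - u(y))_+ \leq c\, d(x,y)$ for every $y \in X$; equivalently $u(x) \leq u(y) + c\,d(x,y)$ for every $y$.

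First I would record that $G[u](x_k) \leq c < +\infty$ forces $u(x_k) < +\infty$ for each $k$, and moreover that each $x_k$ lies in the effective domain; by Proposition~\ref{prop: 21}, $u$ is lsc at each such point, but more importantly I will use the defining inequality $u(x_k) \leq u(z) + c\, d(x_k, z)$ valid for \emph{all} $z \in X$. The key step is to fix an arbitrary $y \in X$ and pass to the limit in $k$ in the chain
    \[
        u(x) \leq \liminf_{k \to \infty} u(x_k) \leq \liminf_{k\to\infty}\bigl( u(y) + c\, d(x_k, y) \bigr) = u(y) + c\, d(x, y),
    \]
where the first inequality uses that $u$ is lsc at $x$ and the last equality uses continuity of the distance together with $x_k \to x$. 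This yields $u(x) \leq u(y) + c\,d(x,y)$, hence $(u(x)-u(y))_+ \leq c\,d(x,y)$, and taking the supremum over $y \neq x$ gives $G[u](x) \leq c$, as desired.

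The one point that needs care — and which I expect to be the main (mild) obstacle — is justifying that $u$ is lsc at the limit point $x$, since a priori we only know $u$ is real-valued and lsc on all of $X$ by hypothesis ($u \in \LSC(X)$), so in fact $u(x) \leq \liminf_k u(x_k)$ holds directly from the standing assumption and no appeal to Proposition~\ref{prop: 21} is even needed. It remains only to observe that the argument is vacuous-safe: if $[G[u] \leq c]$ has no convergent sequences the claim is trivial, and if $u(x) = +\infty$ were possible it is excluded because $u$ is real-valued. Therefore $[G[u] \leq c]$ is closed for every $c$, and $G[u]$ is lsc.
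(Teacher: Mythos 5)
Your proof is correct. You establish lower semicontinuity through the closedness of the sublevel sets $[G[u]\leq c]$: the key observation is that $G[u](x)\leq c$ is equivalent to the family of inequalities $u(x)\leq u(y)+c\,d(x,y)$ for all $y\in X$, each of which passes to the limit along $x_k\to x$ using the lower semicontinuity of $u$ (the standing hypothesis $u\in\LSC(X)$) on the left and the continuity of $d(\cdot,y)$ on the right. The paper argues in the dual direction: for $\alpha<G[u](x)$ it selects a single witness $y\neq x$ with $\varphi_y(x):=\frac{u(x)-u(y)}{d(x,y)}>\alpha$, notes that $\varphi_y$ is lsc on $X\setminus\{y\}$, and concludes that $G[u]>\alpha$ on a ball around $x$ — in effect, $G[u]$ is a supremum of lsc functions and is therefore lsc, which amounts to showing $[G[u]>\alpha]$ is open. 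The two routes are logically equivalent but use different mechanisms: yours transfers the entire family of one-sided Lipschitz-type estimates characterizing the sublevel set and needs no near-optimal witness, while the paper's isolates one near-maximizing $y$ and only needs lower semicontinuity of that single difference quotient. Your handling of the side conditions is also fine: since $\LSC(X)$ consists of real-valued functions, $u(x_k)$ and $u(y)$ are finite throughout, and sequential closedness suffices because $X$ is metric.
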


\begin{proof}
    If $u$ is constant, then $G[u]\equiv 0$ and there is nothing to prove. 
    Let us assume that $u$ is not constant and so, let $x\in X$ such $G[u](x)>0$ (possible $+\infty$). 
    Let $\alpha\in (0,G[u](x))$. 
    Then, by definition of $G[u]$, there is $y\in X\setminus\{x\}$ such that 
    \[\varphi_y(x):=\dfrac{u(x)-u(y)}{d(x,y)}>\alpha.\]
    Since $d$ is continuous and $u$ is lsc, $\varphi_y$ is lsc on $X\setminus \{y\}$.
    Let ${\delta\in (0, d(x,y))}$ be such that, for any $z\in B_{\delta}(x)$, $\varphi_y(z)>\alpha$. 
    So, for any $z\in B_{\delta}(x)$, we have that ${G[u](z)>\alpha}$.
    Since $\alpha$ can be chosen arbitrarily close to $G[u](x)$, $G[u]$ is lsc at $x$. 
    Finally, since $G[u]\geq 0$, we get that $G[u]$ is a lsc function.
\end{proof}

\begin{proposition}
    Let $X$ be a complete metric space and $u \in \LSC(X)$ be bounded from below. 
    Then, one has $\inf s_u(x)=\inf G[u]=0$.
\end{proposition}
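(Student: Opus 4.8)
My plan is to first reduce the statement to the single equality $\inf_X G[u] = 0$, and then to derive it from Ekeland's variational principle.

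For the reduction, I would note that directly from the definitions one has $0 \le s[u](x) \le G[u](x)$ for every $x \in X$ (the convention at isolated points is harmless, since there $s[u](x) = 0 \le G[u](x)$). Taking the infimum over $x \in X$ yields $0 \le \inf_{x\in X} s[u](x) \le \inf_X G[u]$, so it suffices to prove $\inf_X G[u] \le 0$; combined with $G[u] \ge 0$ this forces $\inf_{x\in X} s[u](x) = \inf_X G[u] = 0$.

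For the main estimate, I would fix $\varepsilon > 0$ and apply Ekeland's variational principle to $u$. Since $X$ is complete and $u$ is lsc, real-valued (hence proper), and bounded from below, the principle provides a point $\bar x_\varepsilon \in X$ with $u(y) > u(\bar x_\varepsilon) - \varepsilon\, d(y, \bar x_\varepsilon)$ for all $y \in X \setminus \{\bar x_\varepsilon\}$. Rearranging gives $\dfrac{(u(\bar x_\varepsilon) - u(y))_+}{d(y, \bar x_\varepsilon)} \le \varepsilon$ for every $y \ne \bar x_\varepsilon$, and taking the supremum over such $y$ yields $G[u](\bar x_\varepsilon) \le \varepsilon$. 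Hence $\inf_X G[u] \le \varepsilon$ for every $\varepsilon > 0$, and letting $\varepsilon \to 0^+$ finishes the argument.

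There is essentially no serious obstacle here; the only mild subtlety is that each individual Ekeland quotient is strictly less than $\varepsilon$ whereas their supremum is only guaranteed to be $\le \varepsilon$ — which is exactly what is needed — and one should keep in mind that a real-valued lsc function is automatically proper, so Ekeland's principle applies with no additional hypothesis on $u$.
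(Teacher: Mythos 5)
Your proof is correct and is exactly the argument the paper has in mind: the paper's own proof is simply the one-line remark that the statement is a direct consequence of Ekeland's variational principle, and your write-up supplies precisely those details (the reduction via $0\le s[u]\le G[u]$ and the Ekeland point with $G[u](\bar x_\varepsilon)\le\varepsilon$). No issues.
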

\begin{proof}
    It follows from a direct consequence of Ekeland's variational principle.
\end{proof}
    
\begin{proposition}\label{prop.AC-seq}
    Let $X$ be a complete metric space.
    Let $u:X \rightarrow \R\cup\{+\infty\}$ be a bounded from below proper lsc function.
    Then, there is a sequence $\{z_n\}_{n}$ such that
    \[\lim_{n\to\infty}G[u](z_n)=0,~\text{and }\sum_{n=0}^{\infty} G[u](z_n)d(z_n,z_{n+1})<+\infty.\]
\end{proposition}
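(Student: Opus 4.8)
The plan is to build the sequence $\{z_n\}_n$ iteratively by repeated application of Ekeland's variational principle with geometrically decaying parameters, keeping track of both the slope values and the jumps $d(z_n,z_{n+1})$ along the way.

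First I would fix a summable sequence of positive reals, say $\varepsilon_n = 2^{-n}$, and normalise so that $\inf_X u = 0$. Starting from any point $z_0$ with $u(z_0) < \varepsilon_0$, I apply Ekeland's variational principle to the proper lsc bounded-below function $u$ on the complete space $X$ with accuracy $\varepsilon_0$ and a second parameter to be chosen: there exists $z_1$ with $u(z_1) \le u(z_0)$, $d(z_0,z_1) \le \lambda_0$, and $u(z_1) < u(z) + (\varepsilon_0/\lambda_0)\, d(z,z_1)$ for all $z \ne z_1$. The last inequality says precisely that $(u(z_1)-u(z))_+ / d(z,z_1) < \varepsilon_0/\lambda_0$ for every $z \ne z_1$, i.e. $G[u](z_1) \le \varepsilon_0/\lambda_0$. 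Iterating, at step $n$ I get $z_{n+1}$ with $d(z_n,z_{n+1}) \le \lambda_n$ and $G[u](z_{n+1}) \le \varepsilon_n/\lambda_n$. To make the slopes tend to $0$ I need $\varepsilon_n/\lambda_n \to 0$, and to make $\sum G[u](z_n)\, d(z_n,z_{n+1}) < +\infty$ it suffices (using $G[u](z_{n+1}) d(z_{n+1},z_{n+2}) \le (\varepsilon_n/\lambda_n)\lambda_{n+1}$, plus handling the first term separately) that $\sum_n (\varepsilon_n/\lambda_n)\,\lambda_{n+1} < +\infty$. The natural balanced choice is $\lambda_n = \sqrt{\varepsilon_n} = 2^{-n/2}$: then $\varepsilon_n/\lambda_n = 2^{-n/2} \to 0$ and $(\varepsilon_n/\lambda_n)\lambda_{n+1} = 2^{-n/2}\cdot 2^{-(n+1)/2} = 2^{-1/2}\, 2^{-n}$, whose sum converges. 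One also needs the zeroth term $G[u](z_0)\,d(z_0,z_1)$ to be finite, which holds because $d(z_0,z_1)\le \lambda_0$ is finite provided $G[u](z_0)<+\infty$; if $G[u](z_0) = +\infty$, simply discard $z_0$ and start the indexing at $z_1$, which has finite slope by construction.

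The main technical point — not really an obstacle, but the step requiring care — is the bookkeeping that the slope estimates at two consecutive points combine to control the product $G[u](z_n)\,d(z_n,z_{n+1})$: the slope bound $G[u](z_{n})\le \varepsilon_{n-1}/\lambda_{n-1}$ comes from step $n-1$, while the distance bound $d(z_n,z_{n+1})\le \lambda_n$ comes from step $n$, so the two indices must be tracked separately and one must verify $\sum_n (\varepsilon_{n-1}/\lambda_{n-1})\,\lambda_n < +\infty$ with the chosen parameters. A secondary detail is ensuring each $z_n \in \mathrm{dom}\, u$ so that the Ekeland step can be applied again; this is automatic since Ekeland's point satisfies $u(z_{n+1}) \le u(z_n) < +\infty$. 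With these choices the sequence $\{z_n\}_n$ has $\lim_n G[u](z_n) = 0$ and $\sum_{n=0}^\infty G[u](z_n)\,d(z_n,z_{n+1}) < +\infty$, as required.
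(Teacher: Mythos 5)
There is a genuine gap in your induction. The distance bound $d(z_n,z_{n+1})\le \lambda_n$ that you extract from Ekeland's principle is only available when the starting point is an $\varepsilon_n$-approximate minimizer, i.e.\ when $u(z_n)\le \inf_X u+\varepsilon_n$. You verify this for $n=0$ (you choose $z_0$ with $u(z_0)<\varepsilon_0$), but it does not propagate: each Ekeland step only gives $u(z_{n+1})\le u(z_n)$, so at stage $n$ all you know is $u(z_n)<\varepsilon_0$, not $u(z_n)\le \varepsilon_n=2^{-n}$. Nor can the slope estimate $G[u](z_n)\le \varepsilon_{n-1}/\lambda_{n-1}$ substitute for the missing accuracy hypothesis, since a point of small global slope need not have value close to $\inf_X u$ (consider $u=\min\{|x|,1\}$ on $\R$ at large $|x|$). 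Consequently the bound $d(z_n,z_{n+1})\le\lambda_n$, and with it the summability argument via $\sum_n(\varepsilon_{n-1}/\lambda_{n-1})\lambda_n<+\infty$, is unjustified for $n\ge 1$. The ``secondary detail'' you do address, membership of $z_n$ in $\mathrm{dom}\,u$, is not the issue.

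The repair --- and the route the paper takes --- is to use the other conclusion of Ekeland's principle, the descent inequality $u(z_{n+1})+\delta_n\, d(z_n,z_{n+1})\le u(z_n)$ with $\delta_n:=\varepsilon_n/\lambda_n$, which holds for an arbitrary starting point of $\mathrm{dom}\,u$. This yields $d(z_n,z_{n+1})\le (u(z_n)-u(z_{n+1}))/\delta_n$, hence
\[
G[u](z_n)\, d(z_n,z_{n+1})\ \le\ \frac{\delta_{n-1}}{\delta_n}\,\bigl(u(z_n)-u(z_{n+1})\bigr),
\]
and as soon as the ratios $\delta_{n-1}/\delta_n$ stay bounded (the paper imposes $\epsilon_n/2<\epsilon_{n+1}$, giving the constant $2$; your choice $\delta_n=2^{-n/2}$ would give $\sqrt2$) the series telescopes against $u(z_0)-\inf_X u<+\infty$. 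Note that this is where the hypothesis that $u$ is bounded from below genuinely enters the summability; in your argument the claimed convergence of the series depends only on the parameters $\varepsilon_n,\lambda_n$, which is a further sign that the distance control was obtained too cheaply.
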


\begin{proof}
Let $\{ \epsilon_n \}_n\subset \R$ be a decreasing sequence convergent to $0$ such that $\epsilon_n/2 < \epsilon_{n + 1}$ for every $n \in \N$. 
Using Ekeland's variational principle (see, for instance~\cite[Theorem 1.4.1]{Z_2002}), there exists $z_0 \in X$ such that %$d(\bar x, z_1) \leq \epsilon_1^{-1} u(\bar x)$ and 
$G[u](z_0) \leq \epsilon_0 < + \infty$.
We construct a sequence $\{z_n\}_n$ by induction such that, for all $n\in \N$ we have 
\begin{align*}
    G[u](z_n)&\leq \epsilon _n,\\
    \dfrac{1}{2}G[u](z_n)d(z_n,z_{n+1})&\leq u(z_n)- u(z_{n+1}).
\end{align*}
Let $k\in \N$ and assume that we have already defined $\{z_n\}_{n=0}^k$ satisfying above inequalities.
Fix $\varrho \in (\epsilon_k/2, \epsilon_{k+1})$. 
Applying Ekeland's variational principle again, we find $z_{k+1} \in X$ such that
    \begin{equation}\label{eke.01}
        u(z_{k+1}) \leq u(z_k) - \varrho d(z_{k+1}, z_k)
    \end{equation}
and 
    \begin{equation}\label{eke.02}
        u(z_{k+1}) < u(y) + \varrho d(z_{k+1}, y), \, \text{ for all } y \neq z_{k+1}.
    \end{equation}
From \eqref{eke.02}, we get $G[u](z_{k+1}) \leq \varrho \leq \epsilon_{k+1}$. 
Combining \eqref{eke.01} and the fact that $G[u](z_k)/2 \leq \epsilon_k/2 < \varrho$, we obtain $\frac{1}{2}G[u](z_k) d(z_k, z_{k+1}) \leq u(z_k) - u(z_{k+1})$. 
The induction step is complete.

\medskip

Observe that $\lim_{n \rightarrow \infty} G[u](z_n) = 0$. 
Moreover, since $u$ is bounded from below, we have that
    \begin{align*}
        \sum_{n = 0}^{\infty} G[u](z_n)d(z_n, z_{n + 1}) \leq 2(u(z_1) - \inf_X u) < + \infty.
    \end{align*}
The proposition is proven.
\end{proof}

\begin{remark}\normalfont\label{Ekeland.rmk}
A careful inspection of the proof of Proposition~\ref{prop.AC-seq} gives us assertion $(i)$. Assertion $(ii)$ follows directly from Ekeland's principle.
	\begin{itemize}
		\item[$(i)$] For each $\sigma \in (0, 1)$ and $x \in \mathrm{dom}~G[u]$, there exists a sequence $\{ z_n \}_n$ starting at $x$ and depending on $\sigma$ such that 
    \begin{equation}
        \lim_{n \to \infty} G[u](z_n) = 0, \quad \sum_{n = 0}^{\infty} G[u](z_n)d(z_n, z_{n + 1}) < + \infty,
    \end{equation}
    and
    	\begin{equation}
    		(1 - \sigma) G[u](z_n)d(z_n, z_{n + 1}) \leq u(z_n) - u(z_{n + 1}), \quad \text{for all }n\in\N.
    	\end{equation}

    		\item[$(ii)$] For any $x \in \mathrm{dom} \, u$, there exists a sequence $\{ x_n \}_n \subset \mathrm{dom} \, G[u]$ such that 
                \[
               \lim_{n \to \infty} x_n = x \text{ and } \lim_{n \to \infty} u(x_n) \leq u(x).
                \] 
            Consequently, we always have $\mathrm{dom} \, u \subset \overline{\mathrm{dom}}\, G[u]$.
	\end{itemize}
\end{remark}

Lastly, for the sake of convenience, we recall a key lemma from~\cite{TZ_2023}, which will be used in the sequel.

\begin{proposition}\label{prop.TZ}{\normalfont \cite[Lemma 4.2]{TZ_2023}}
Let $X$ be a metric space and $u: X \to \R \cup \{ + \infty \}$ be a proper function. If a sequence $\{x_n\}_n \subset X$ satisfies
    \begin{align*}
        \lim_{n \to \infty} G[u](x_n) = 0 \quad \text{and} \quad \sum_{n = 0}^{ \infty}  G[u](x_{n + 1})d(x_n, x_{n + 1}) < + \infty,
    \end{align*}
then $\liminf_{n \to \infty} u(x_n) = \inf_X u$. 
\end{proposition}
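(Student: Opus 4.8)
The plan is to establish the two inequalities separately. The bound $\liminf_{n\to\infty} u(x_n) \ge \inf_X u$ is immediate since $u(x_n) \ge \inf_X u$ for every $n$. For the reverse inequality it suffices to fix an arbitrary $z \in \mathrm{dom}\,u$ and prove $\liminf_{n\to\infty} u(x_n) \le u(z)$, as then taking the infimum over all such $z$ gives $\liminf_{n\to\infty} u(x_n) \le \inf_X u$. I will argue by contradiction: assume $\liminf_{n\to\infty} u(x_n) > u(z)$, so there exist $\eta > 0$ and $N \in \N$ with $u(x_n) > u(z) + \eta$ for all $n \ge N$.

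The first step extracts a divergence statement from the slope. For $n \ge N$ one has $x_n \ne z$, so the defining inequality for $G[u](x_n)$ evaluated at the test point $z$ yields $u(x_n) - u(z) \le G[u](x_n)\,d(x_n,z)$. Writing $g_n := G[u](x_n)$ and $a_n := d(x_n, z)$, this gives
\[
g_n\,a_n \ge u(x_n) - u(z) > \eta \qquad \text{for all } n \ge N .
\]
In particular $g_n > 0$ and $a_n > \eta/g_n$ for $n \ge N$, and since $g_n \to 0$ by hypothesis we conclude $a_n \to +\infty$.

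The second step uses the summability hypothesis to contradict $a_n \to +\infty$. By the triangle inequality $d(x_n, x_{n+1}) \ge |a_{n+1} - a_n|$, hence
\[
\sum_{n\ge N} g_{n+1}\,|a_{n+1}-a_n| \le \sum_{n\ge N} g_{n+1}\,d(x_n,x_{n+1}) < +\infty .
\]
Using $g_{n+1} \ge \eta/a_{n+1}$ this implies $\sum_{n\ge N} |a_{n+1}-a_n|/a_{n+1} < +\infty$; in particular the general term tends to $0$, so $a_n/a_{n+1} \to 1$. Consequently there is $N_1 \ge N$ such that $a_{n+1} \le 2a_n$ for $n \ge N_1$, and then the elementary inequality $\ln s \le s-1$ (applied with $s = a_{n+1}/a_n$ in the increasing case and $s = a_n/a_{n+1}$ in the decreasing case) gives $|a_{n+1}-a_n|/a_{n+1} \ge \tfrac12\,|\ln a_{n+1} - \ln a_n|$ for $n \ge N_1$. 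Summing from $N_1$ to $M$ and telescoping, the partial sums are bounded below by $\tfrac12\,|\ln a_{M+1} - \ln a_{N_1}|$, which tends to $+\infty$ because $a_n \to +\infty$; this contradicts the finiteness established above, completing the proof.

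I expect the genuinely delicate point to be the last step. Bounding $\sum |a_{n+1}-a_n|/a_{n+1}$ from below by a telescoping logarithmic series does not work term-by-term, since $(a_n)$ need not be monotone and could a priori have large multiplicative jumps; one has to first use the finiteness of the series itself to force consecutive ratios close to $1$ before the logarithmic comparison becomes legitimate. The remaining ingredients — the reduction to a fixed test point, the defining inequality for the global slope, and the triangle inequality — are routine.
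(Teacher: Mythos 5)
The paper does not prove this proposition; it is imported verbatim from \cite[Lemma 4.2]{TZ_2023}, so there is no internal argument to compare yours against. Your proof is correct and self-contained. The reduction to showing $\liminf_n u(x_n)\le u(z)$ for each fixed $z\in\mathrm{dom}\,u$ is sound (and handles the case $\inf_X u=-\infty$ automatically); the inequality $g_n a_n\ge u(x_n)-u(z)>\eta$ is a legitimate use of the definition of $G[u]$ once one notes, as is forced by $G[u](x_n)\to 0$, that $u(x_n)<+\infty$ for large $n$; and the concluding contradiction is valid. You are right that the delicate point is the lower bound $|a_{n+1}-a_n|/a_{n+1}\ge\tfrac12|\ln a_{n+1}-\ln a_n|$: this genuinely needs the prior control $a_{n+1}\le 2a_n$ in the increasing case, and you correctly extract that control from the vanishing of the general term of the convergent series before invoking $\ln s\le s-1$. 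A marginally shorter variant of the same endgame replaces $a_{n+1}$ by the partial sums $S_n:=d(x_N,z)+\sum_{k=N}^{n}d(x_k,x_{k+1})\ge a_{n+1}$, so that $\sum_n d(x_n,x_{n+1})/a_{n+1}\ge\sum_n (S_n-S_{n-1})/S_n$, which diverges by the Abel--Dini comparison $\sum_n (S_n-S_{n-1})/S_n\ge\int_{S_N}^{\infty}dt/t$ whenever $S_n\to+\infty$; this avoids the two-sided ratio estimate, but the underlying idea (comparison with a divergent logarithmic series) is the same as yours.
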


\section{The (discounted) global slope equation}\label{sec: global slope}
Fix $(X,d)$ a metric space, $\lambda\geq 0$ and let $\ell:X\to\R$ be a lsc function such that $\inf_X \ell=0$. Consider the discounted version of equation~\eqref{eqn: G0}:
\begin{equation}\tag{$\EuScript{G}_{\lambda}$}\label{eqn: Gs}
    \begin{dcases}
    \lambda u(x)+G[u](x)=\ell(x),&~\text{for all }x\in X,\\
    {}\hspace{1.3cm}\inf_{x\in X} u(x) = 0.
    \end{dcases}
\end{equation}

In this section, we demonstrate that, under mild assumptions on the data $\ell$, the global slope equation~\eqref{eqn: Gs} always admits a solution. 
The uniqueness issue was already treated in \cite{IZ_2023} for the case when $X$ is complete and $\lambda=0$.  In \Cref{sec: uni}, we provide a uniqueness result when $X$ is complete, $\lambda>0$ and $\ell:X\to\R$ is bounded. 
Further, as a consequence of the above results, we characterize complete metric spaces in terms of the uniqueness of the mentioned equation, see Corollary~\ref{cor: characterization}.

\medskip

We are interested in pointwise solutions of equation~\eqref{eqn: Gs}. In the following definition we set the notions of sub- and supersolution that are used along this manuscript.
In Section~\ref{sec: viscosity} we characterize these notions from a viscosity perspective. 
Moreover, in \Cref{subsec: integration} we deal with the case in which $\ell$ takes the value $+\infty$.

\begin{definition}\label{def.P-solu}
A function $u: X \rightarrow \R\cup\{+\infty\}$ is said to be a subsolution of~\eqref{eqn: Gs} if $\textstyle\inf_X u = 0$ and 
    \begin{align*}
        \lambda u(x) + G[u](x) \leq \ell(x),\quad \text{for all }x\in X.
    \end{align*}
Similarly, a function $v: X\rightarrow \R\cup\{+\infty\}$ is said to be a supersolution of \eqref{eqn: Gs} if it is lsc, $\textstyle\inf_X v = 0$ and 
    \begin{align*}
        \lambda v(x) + G[v](x) \geq \ell(x),\quad \text{for all }x\in X.
    \end{align*}
A lsc function $u$ is a solution of~\eqref{eqn: Gs} if it is both sub-- and supersolution of ~\eqref{eqn: Gs}.
\end{definition}

\subsection{Existence results}\label{subsec: exist}

Thanks to Proposition~\ref{prop.AC-seq}, if $X$ is complete, the following hypothesis is a necessary condition for the existence of solutions of equation~\eqref{eqn: G0} (i.e. $\lambda=0$).

\begin{assumption}\label{ass.}
We say that the function $\ell:X\to[0,+\infty)$ satisfies hypothesis $(H_0)$ if there is a sequence $\{\bar x_n\}_n\subset X$ such that
\[ \tag{$H_0$}\label{eq: hyp} \sum_{n=0}^\infty \ell(\bar x_n)d(\bar x_n,\bar x_{n+1})<+\infty~\qquad\text{and }\qquad \lim_{n\to\infty} \ell(\bar x_n)=0.\]
\end{assumption} 

In the following proposition, we study the existence of supersolutions of equation~\eqref{eqn: Gs}. Observe that there is always a subsolution, namely the constant function equal to $0$. 

\begin{proposition}\label{prop.super}
    The following assertions hold true:
    \begin{enumerate}
        \item[$(a)$] If $\lambda >0$, then \eqref{eqn: Gs} always admits a supersolution.
        \item[$(b)$] If $\ell$ satisfies \eqref{eq: hyp}, then \eqref{eqn: G0} admits a supersolution.
    \end{enumerate}
\end{proposition}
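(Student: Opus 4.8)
The plan is to exhibit explicit supersolutions in both cases. For part $(a)$, when $\lambda > 0$, I would try the constant function $v \equiv c$ for a suitable $c \geq 0$. Since $G[v] \equiv 0$ for a constant, the supersolution inequality $\lambda v(x) + G[v](x) \geq \ell(x)$ becomes $\lambda c \geq \ell(x)$ for all $x$, which forces $c \geq \sup_X \ell / \lambda$. This works only if $\ell$ is bounded, so in general a constant will not do. Instead I would look for $v$ of the form $v(x) = \tfrac{1}{\lambda}\big(\ell(x) \wedge k\big)$ truncations, or more directly note that the function $v := \tfrac{\ell}{\lambda}$ itself is a natural candidate: it is lsc (as $\ell$ is lsc), it satisfies $\inf_X v = \tfrac{1}{\lambda}\inf_X \ell = 0$, and $\lambda v(x) + G[v](x) = \ell(x) + G[\tfrac{\ell}{\lambda}](x) \geq \ell(x)$ since $G \geq 0$. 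Hence $v = \ell/\lambda$ is a supersolution; this is consistent with the formula $u_\lambda = T_\lambda^\infty \tfrac{\ell}{\lambda}$ advertised in the introduction. One subtlety: if $\ell$ takes the value $+\infty$ — but here $\ell : X \to \R$ is assumed real-valued, so $v$ is proper and everything is fine.

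For part $(b)$, when $\lambda = 0$ and $\ell$ satisfies $(H_0)$, I would build a supersolution using the sequence $\{\bar x_n\}_n$ provided by the hypothesis. The natural object is a value-function-type construction: set
\begin{equation*}
    v(x) := \inf\left\{ \sum_{n=0}^{\infty} \ell(x_n)\, d(x_n, x_{n+1}) : \{x_n\}_n \in \mathcal{K}(x) \right\},
\end{equation*}
where $\mathcal{K}(x) = \{\{x_n\}_n : x_0 = x,\ \lim_n \ell(x_n) = 0\}$ — exactly the formula \eqref{u_ley} for $u_0$. Hypothesis $(H_0)$ guarantees $\mathcal{K}(x)$ is nonempty for at least one point, but to make $v$ finite everywhere one concatenates an arbitrary finite path from $x$ to that point with $\{\bar x_n\}_n$; I would need $(H_0)$ to hold with a base point that is reachable, or argue that since the distances $d(\bar x_n, \bar x_{n+1})$ appear with weights $\ell(\bar x_n) \to 0$, prepending a single step $x \to \bar x_0$ only adds $\ell(x) d(x,\bar x_0) < +\infty$, so $v(x) < +\infty$ for every $x$. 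Then one checks: $v \geq 0$ clearly, and $\inf_X v = 0$ because along the tail of $\{\bar x_n\}$ the cost-to-go tends to $0$ (here Proposition 2.7 / the convergence $\liminf u(x_n) = \inf u$ may be invoked, or a direct estimate). The supersolution inequality $G[v](x) \geq \ell(x)$ is the heart of part $(b)$: by a dynamic-programming / triangle-inequality argument, $v(x) \leq \ell(x) d(x,y) + v(y)$ for every $y$ (concatenate the step $x \to y$ with a near-optimal path from $y$), hence $\tfrac{(v(x) - v(y))_+}{d(x,y)} \leq \ell(x)$ — wait, that gives the wrong direction; in fact it gives $G[v](x) \leq \ell(x)$, a subsolution bound. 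To get $G[v](x) \geq \ell(x)$ one must instead produce, for each $x$, points $y$ with $\tfrac{v(x)-v(y)}{d(x,y)}$ close to $\ell(x)$: take $y = \bar x_1$ along an almost-optimal path from $x$, so that $v(x) \approx \ell(x) d(x,y) + v(y)$ with near-equality, giving $v(x) - v(y) \approx \ell(x) d(x,y)$.

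The main obstacle I anticipate is precisely this lower bound $G[v](x) \geq \ell(x)$ in part $(b)$: one needs near-optimal paths whose first step has length bounded below, which can fail if the infimum defining $v(x)$ is attained "at the first step" only in the limit $d(x_0,x_1) \to 0$. The fix is to note that if $v(x) < \ell(x) \cdot \varepsilon$ for all small first-step lengths $\varepsilon$, one can still extract a competitor path for which the first-step ratio approaches $\ell(x)$, using lower semicontinuity of $\ell$ at $x$; alternatively, since this full verification is carried out later in the paper (the maximal-solution section), here it may suffice to record that $v$ as defined is lsc, bounded below by $0$ with infimum $0$, and satisfies $G[v](x) \leq \ell(x)$ together with $G[v](x) \geq \ell(x)$ — and to defer the delicate direction to the more general machinery of Section~\ref{sec: viscosity}, or to verify it directly via the characterization of fixed points in Proposition~\ref{prop: T subsolution}. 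I would present the construction, the easy properties, and then the two slope inequalities, flagging that the $\geq$ inequality uses $(H_0)$ essentially (without it, $v \equiv 0$ could wrongly be forced).
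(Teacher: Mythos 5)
Part $(a)$ of your proposal is exactly the paper's argument: $v=\ell/\lambda$ works verbatim, and your remarks about constants and truncations are harmless detours.

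Part $(b)$ is where there is a genuine gap. You choose as your candidate the value function $v=u_0$ from \eqref{u_ley}, and you correctly diagnose that the dynamic-programming inequality only yields $G[v]\leq \ell$ (the \emph{sub}solution direction), while the supersolution inequality $G[v](x)\geq\ell(x)$ is the hard part: a near-optimal path from $x$ may have arbitrarily short first steps, and nothing in your sketch controls the difference quotient in that regime. Your proposed fixes do not close this: the phrase ``extract a competitor path for which the first-step ratio approaches $\ell(x)$'' is precisely the claim that needs proof, and deferring to ``the more general machinery'' is not available here, because that machinery is Proposition~\ref{prop.solu_series}, whose proof of the supersolution property explicitly replicates the Case~2 analysis of Theorem~\ref{thm: existence} (splitting according to whether the near-optimal discrete paths stay near $x$ or escape a fixed ball). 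In other words, proving that $u_0$ is a supersolution is essentially as hard as the main existence theorem, whereas Proposition~\ref{prop.super} is meant to be the cheap input that Theorem~\ref{thm: existence} then upgrades to a solution.

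The paper avoids all of this by exploiting that supersolutions are allowed to take the value $+\infty$ (Definition~\ref{def.P-solu}), and that $G[v](x)=+\infty$ wherever $v(x)=+\infty$. If $[\ell=0]\neq\emptyset$, the function equal to $0$ on $[\ell=0]$ and $+\infty$ elsewhere is already a supersolution. If $[\ell=0]=\emptyset$, one sets $u(\bar x_k)=\sum_{n\geq k}\ell(\bar x_n)d(\bar x_n,\bar x_{n+1})$ along the sequence from \eqref{eq: hyp} and $u=+\infty$ off it; since $\ell>0$ is lsc and $\ell(\bar x_n)\to 0$, the sequence has no accumulation points, so $u$ is lsc, its infimum is $0$ by convergence of the tail sums, and $G[u](\bar x_k)\geq\bigl(u(\bar x_k)-u(\bar x_{k+1})\bigr)/d(\bar x_k,\bar x_{k+1})=\ell(\bar x_k)$, with $G[u]=+\infty$ elsewhere. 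Your insistence on a finite-valued candidate is what forces you into the delicate analysis; with the extended-real-valued construction the proposition is a two-line verification.
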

\begin{proof}
    $(a)$ Since $\ell$ is a lsc function with $\textstyle\inf_X \ell=0$, the function $u:X\to \R\cup\{+\infty\}$ defined by $u=\ell/\lambda$ is a supersolution of~\eqref{eqn: Gs}.\\
    \noindent $(b)$ We split the argument into two cases. Assume that the set $[\ell=0]$ is nonempty (and therefore~\eqref{eq: hyp} is trivially satisfied). 
    Then, it follows that the function $u: X \to \R\cup\{+\infty\}$ defined by 
    \[u(x):= \begin{cases}
        0&\quad \text{ for all }x\in [\ell=0]\\
        +\infty&\quad \text{otherwise.}
    \end{cases}
    \]
    is a supersolution of~\eqref{eqn: G0}.
    Assume now that  $[\ell=0]=\emptyset$ and consider $\{\bar x_n\}_n\subset X$ be a sequence given by $(H_0)$.
    Without loss of generality, we can assume that $\{\bar x_n\}_n$ is injective. 
    Also, since $\ell$ is lsc and $\ell(x)>0$ for all $x\in X$, the sequence $\{\bar x_n\}_n$ does not have accumulation points. 
    Consider the function $u:X\to\R$ defined by 
    \[u(x):=\begin{cases}
        \sum_{n=k}^\infty \ell(\bar x_n)d(\bar x_n, \bar x_{n+1}),&~\text{if }x= \bar x_k\\
        +\infty,&~\text{otherwise}.
    \end{cases}\]
    
    We show that $u$ is lsc, $\textstyle\inf_X u=0$ and $G[u]\geq \ell$ in  $X$.
    Indeed, since $\{\bar x_n\}_n$ has no accumulation points, $\textup{dom}\,u$ is a discrete set and therefore $u$ is lsc.
    It is clear that $u\geq 0$ and that, since the series given by assumption~\eqref{eq: hyp} is convergent, $\lim_{n\to\infty} u(\bar x_n)=0$.
    It only rests to compute $G[u]$.
    Let $x\in X$.
    If $x\notin \{\bar x_n:~n\in\N\}$, then $u(x)=+\infty$. Hence $G[u](x)=+\infty$.
    If $x=\bar x_k$, for some $k\in K$, we have
    \[G[u](\bar x_k)\geq \dfrac{u(\bar x_k)-u(\bar x_{k+1})}{d(\bar x_k,\bar x_{k+1})}= \ell(\bar x_k).\]
    So, $G[u]\geq \ell$ on $X$. 
\end{proof}

Let $u: X \to \R \cup \{ + \infty \}$ be a function such that $\lambda u(x) + G[u](x) \leq \ell(x)$ for some $x \in X$. Then, we have that 
    \begin{align*}
        u(x) \leq \dfrac{u(y) + \ell(x)d(x, y)}{1 + \lambda d(x, y)}, \quad \text{ for all } y \in X.
    \end{align*}
Motivated by above inequality, we formally define the operator ${T_{\lambda, \ell}:[-\infty,+\infty ]^X\to[-\infty,+\infty ]^X}$ by
\begin{equation}\label{oper-T}
T_{\lambda,\ell} u(x):= \inf\left\{\dfrac{u(y)+\ell(x)d(x,y)}{1+\lambda d(x,y)}:~y\in X\right \}.
\end{equation}
If there is no risk of confusion, we may drop the subindexes $\lambda$ and $\ell$ and just write $T:=T_{\lambda,\ell}$.

\medskip

Observe that, for any function $u:X\to \R\cup\{+\infty\}$, we have that  $u\geq Tu$. 
Also, it easily follows that, if $\textstyle\inf_X u=0$, then $ \textstyle\inf_X Tu=0$.
That is, $T:[0,+\infty]^X\to [0,+\infty]^X$.
Albeit simple, let us proceed with the following proposition.

\begin{proposition}\label{prop: T subsolution}
    Let $u: X\to\R$ be a function.
    Then, $Tu=u$ if and only if $\lambda u(x)+G[u](x)\leq\ell(x)$ for all $x\in X$. 
    Moreover, $u$ is a subsolution of~\eqref{eqn: Gs} if and only if $Tu=u$ and $\inf_X u =0$.
\end{proposition}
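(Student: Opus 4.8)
The plan is to prove the two equivalences directly from the definitions, handling the forward and backward directions of each separately. The key algebraic observation is that for fixed $x, y \in X$ with $y \neq x$, the inequality
\[
u(x) \leq \frac{u(y) + \ell(x) d(x,y)}{1 + \lambda d(x,y)}
\]
is equivalent (after multiplying by the positive quantity $1 + \lambda d(x,y)$ and rearranging) to
\[
\lambda u(x) d(x,y) + u(x) - u(y) \leq \ell(x) d(x,y),
\]
i.e. to $\lambda u(x) + \frac{u(x) - u(y)}{d(x,y)} \leq \ell(x)$. Since this must hold for all $y \neq x$, and since the supremum over $y \neq x$ of $\frac{u(x)-u(y)}{d(x,y)}$ equals $G[u](x)$ when $u(x) < +\infty$ and $G[u](x) = 0$ when $x$ is isolated (in which case the supremum is over the empty set), taking the supremum over $y \neq x$ turns the family of inequalities into the single inequality $\lambda u(x) + G[u](x) \leq \ell(x)$. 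The case $y = x$ contributes the trivial inequality $u(x) \leq u(x)$ and may be discarded.

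First I would record that $Tu \leq u$ always holds (take $y = x$ in the infimum), so $Tu = u$ is equivalent to $Tu \geq u$, that is, to $u(x) \leq T_{\lambda,\ell} u(x)$ for every $x \in X$; this in turn is equivalent to demanding that $u(x) \leq \frac{u(y) + \ell(x) d(x,y)}{1 + \lambda d(x,y)}$ for all $x, y \in X$. Then, fixing $x$, I would split according to whether $x$ is isolated. If $x$ is isolated, the condition over $y \neq x$ is vacuous or easily seen to be compatible, and $G[u](x) = 0$, so the equivalence with $\lambda u(x) + G[u](x) \leq \ell(x)$ reduces to $\lambda u(x) \leq \ell(x)$, which is exactly what the $y$-quantifier gives in the limit; more carefully, for isolated $x$ one checks that $\inf_{y} \frac{u(y) + \ell(x) d(x,y)}{1+\lambda d(x,y)}$ is attained in the limit $d(x,y) \to$ the separation distance or at $y = x$, and the binding constraint is $u(x) \le u(x)$ together with $\lambda u(x) \le \ell(x)$ coming from large $d(x,y)$; I would phrase this cleanly by noting that $\frac{u(y)+\ell(x)t}{1+\lambda t} \to \ell(x)/\lambda$ as $t \to \infty$ when $\lambda > 0$, so in any case the constraint from $y \neq x$ forces $\lambda u(x) \leq \ell(x)$, matching $\lambda u(x) + G[u](x) \le \ell(x)$ since $G[u](x) = 0$. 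For non-isolated $x$, the displayed algebraic equivalence together with taking the supremum over $y \neq x$ gives the claim directly.

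For the "moreover" part, this is immediate once the first equivalence is established: by Definition~\ref{def.P-solu}, $u$ is a subsolution of~\eqref{eqn: Gs} precisely when $\inf_X u = 0$ and $\lambda u(x) + G[u](x) \leq \ell(x)$ for all $x \in X$, and by the first part the latter inequality is equivalent to $Tu = u$; so $u$ is a subsolution if and only if $Tu = u$ and $\inf_X u = 0$.

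The main obstacle, modest as it is, is the careful treatment of the isolated-point case and of points where $u(x) = +\infty$, since the statement is about $u : X \to \mathbb{R}$ but the operator $T$ and the slope $G$ are set up to handle infinite values; I would note at the outset that for $u$ real-valued the convention $G[u](x) = +\infty$ never triggers, that $G[u](x) = 0$ at isolated points by definition, and that when $\lambda = 0$ the denominator $1 + \lambda d(x,y)$ is simply $1$, so the equivalence degenerates to the familiar statement $Tu = u \iff G[u] \leq \ell$. With these conventions pinned down, the argument is just the rearrangement described above applied uniformly.
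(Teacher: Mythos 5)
Your overall strategy --- rearrange the defining inequality of $T$ and take a supremum over $y$ --- is the same one the paper relies on (its own proof is a one-line ``immediate from the definition''), but there is a genuine gap at the step where you identify $\sup_{y\neq x}\tfrac{u(x)-u(y)}{d(x,y)}$ with $G[u](x)$. The global slope is $\sup_{y\neq x}\tfrac{(u(x)-u(y))_+}{d(x,y)}$, i.e.\ the \emph{positive part} of the supremum you compute, so $G[u](x)$ can be strictly larger than that supremum: this happens exactly when $x$ is a global minimizer away from which $u$ grows at a linear rate, in which case your supremum is negative while $G[u](x)=0$. Consequently the implication $Tu=u\Rightarrow \lambda u+G[u]\le\ell$ does not follow from your rearrangement, and indeed the first sentence of the proposition is false as literally stated when $\lambda>0$: take $X=\{a,b\}$ with $d(a,b)=1$, $\lambda=1$, $\ell(a)=0$, $\ell(b)=3$, $u(a)=1$, $u(b)=2$; then $Tu(a)=\min\{1,\tfrac{2+0}{2}\}=1=u(a)$ and $Tu(b)=\min\{2,\tfrac{1+3}{2}\}=2=u(b)$, so $Tu=u$, yet $\lambda u(a)+G[u](a)=1+0>0=\ell(a)$. (The converse implication, $\lambda u+G[u]\le\ell\Rightarrow Tu=u$, is fine, since $t\le t_+$.)

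What rescues the statement where it is actually used is the normalization $\inf_X u=0$ in the ``moreover'' part: if $Tu=u$ and $\inf_X u=0$, then at any $x$ with $\sup_{y\neq x}\tfrac{u(x)-u(y)}{d(x,y)}<0$ one has $u(x)<u(y)$ for every $y\neq x$, hence $u(x)=\inf_X u=0$ and $\lambda u(x)+G[u](x)=0\le\ell(x)$; at every other point the supremum is nonnegative, coincides with $G[u](x)$, and your rearrangement applies. Your proof of the ``moreover'' part simply quotes the first equivalence rather than making this observation, so it inherits the gap. Two smaller points: your discussion of isolated points is misplaced --- isolated points matter for the local slope $s[u]$, but $G[u](x)$ is a supremum over all $y\neq x$ in $X$ regardless of whether $x$ is isolated, and the heuristic that ``large $d(x,y)$ forces $\lambda u(x)\le\ell(x)$'' fails when $X$ is bounded or when $u(y)$ grows with $d(x,y)$. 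Note also that for $\lambda=0$ the problematic case disappears (there $\lambda u(x)=0\le\ell(x)$ automatically), so your argument is complete in that case.
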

\begin{proof}
    The first equivalence immediately follows from the definition of $~T$. 
    The second equivalence is a consequence of the first part of the proposition and the definition of subsolutions.
\end{proof}

%On the other hand, the following proposition shows that the set of supersolutions of $\Pl$ is nonempty whenever the set of critical points $\{\ell=0\}$ is nonempty. The general case will be given later (in proposition---).

%\begin{proposition}\label{prop: supersolution}
%For any lsc function $\ell:X\to[0,\infty)$, with $S=\{\ell=0\}\neq\emptyset$, the function $U_0:X\to\R$ defined by 
%\[U_0(x):=\dfrac{\ell(x)\textup{dist}(x,S)}{1+\lambda \textup{dist}(x,S)},~\text{for all }x\in X,\] satisfies $\lambda U_0+G[U_0]\geq \ell$. 
%Moreover, if $\ell$ is locally bounded on $X$ and continuous on $X\setminus [\ell=0]$, then $U_0$ is a supersolution of $\Pl$.
%\end{proposition}

%\begin{proof}
%    Indeed, for any $x\in X$, it follows that $\lambda U_0+G[U_0](x)\geq \ell(x)$ (to estimate $G[U_0]$, one just need to compare against points in $S$).
%    Further, $U_0$ is continuous on $X\setminus S$ due to the fact that $\ell$ is continuous on $X\setminus [\ell=0]$.
%    On the other hand, $U_0$ is continuous on $S$ due to the fact that $\ell$ is locally bounded. 
%\end{proof}

%%%%
%%%%
%%%%
%%%%
%%%%
%%%%
%%%%
%%%%
%%%%
The next proposition shows that $T$ can be used to define a projection onto the set of subsolutions of~\eqref{eqn: Gs}.

\begin{proposition}\label{prop: Tv=v}
    Let $u_0: X \to \R\cup\{+\infty\}$ be a non-negative function. 
    Define $u_n:=Tu_{n-1}$ for $n \geq 1$.
    Then, the sequence $\{u_n\}_n$ converges pointwise to a function $v:X \to \R$ that satisfies $Tv=v$.
\end{proposition}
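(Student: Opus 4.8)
The plan is to lean on two elementary properties of the operator $T=T_{\lambda,\ell}$. First, $T$ is order preserving: if $u\le w$ pointwise then $Tu\le Tw$, because for fixed $x,y$ the map $t\mapsto\frac{t+\ell(x)d(x,y)}{1+\lambda d(x,y)}$ is nondecreasing. Second, $T$ is a ``contraction from above'', $Tu\le u$ pointwise, obtained by choosing $y=x$ in the infimum \eqref{oper-T}. From $Tu_0\le u_0$, i.e. $u_1\le u_0$, and monotonicity, an immediate induction gives $u_{n+1}=Tu_n\le u_n$ for every $n$. Since $T$ maps $[0,+\infty]^X$ into itself (as noted just before the statement), for each $x\in X$ the sequence $\{u_n(x)\}_n$ is nonincreasing and bounded below by $0$, hence convergent; I set $v(x):=\lim_{n\to\infty}u_n(x)=\inf_n u_n(x)\in[0,+\infty]$.

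It then remains to show $v$ is real-valued and $Tv=v$. The real-valuedness is the only step that needs an observation beyond bookkeeping: discarding the trivial case $u_0\equiv+\infty$ (where all $u_n\equiv+\infty$), fix $y_0\in\mathrm{dom}\,u_0$; then for every $x\in X$,
\[
u_1(x)=Tu_0(x)\le \frac{u_0(y_0)+\ell(x)\,d(x,y_0)}{1+\lambda\,d(x,y_0)}<+\infty,
\]
since $u_0(y_0)$, $\ell(x)$ and $d(x,y_0)$ are all finite. As $0\le v\le u_1$, this gives $v:X\to\R$.

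Finally, for the fixed-point identity, $Tv\le v$ is free from the ``contraction from above'' property. For the reverse inequality I would avoid exchanging a limit with an infimum directly: fixing $x,y\in X$ and using, for every $n\ge 1$ (so all terms are finite), the defining bound $u_{n+1}(x)\le\frac{u_n(y)+\ell(x)d(x,y)}{1+\lambda d(x,y)}$, I let $n\to\infty$; since $u_{n+1}(x)\downarrow v(x)$, $u_n(y)\downarrow v(y)$ and $t\mapsto\frac{t+\ell(x)d(x,y)}{1+\lambda d(x,y)}$ is continuous, this yields $v(x)\le\frac{v(y)+\ell(x)d(x,y)}{1+\lambda d(x,y)}$, and taking the infimum over $y\in X$ gives $v(x)\le Tv(x)$. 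Hence $Tv=v$. I do not expect a genuine obstacle anywhere: the whole argument is order-theoretic and uses neither completeness nor compactness of $X$, nor continuity of $\ell$ — which is exactly what makes the construction $T^\infty$ usable in full generality later on.
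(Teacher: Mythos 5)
Your proof is correct and follows essentially the same route as the paper's: the sequence $\{u_n\}_n$ is nonincreasing (from $Tw\le w$) and bounded below by $0$, hence converges pointwise, and the inequality $v\le Tv$ is obtained from the defining bound $u_{n+1}(x)\le\frac{u_n(y)+\ell(x)d(x,y)}{1+\lambda d(x,y)}$ — you pass to the limit in $n$ directly where the paper runs the same inequality as a proof by contradiction, a purely cosmetic difference. Your extra observation that real-valuedness of $v$ requires $u_0$ to be proper (via $v\le u_1=Tu_0<+\infty$) is a welcome addition, since the paper's proof silently skips this point and the statement as written would fail for $u_0\equiv+\infty$.
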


\begin{proof}
    Since $u_0 \geq 0$, we have that $T^{n-1}u_0\geq T^nu_0\geq 0$, for all $n\in \N$. 
    Hence, $\{u_n\}_n$ converges pointwise to some function $v$. To show that $Tv=v$, it suffices to check that $v \leq Tv$. 
    Reasoning towards a contradiction, we assume that there is $x\in X$ such that $Tv(x)<v(x)$. Therefore, there is $y\in X$ such that
    \begin{align*}
        \dfrac{v(y)+\ell(x)d(x,y)}{1+\lambda d(x,y)}<v(x)
    \end{align*}
    By definition of $v$, there is $n\in \N$ such that
    \begin{align*}
        \dfrac{u_n(y)+\ell(x)d(x,y)}{1+\lambda d(x,y)}<v(x)\leq u_n(x).
    \end{align*}
    The above inequality yields that $v(x)\leq u_{n+1}(x)<v(x)$, which is a contradiction. The proof is complete.
\end{proof}

We denote by $T^\infty:[0, + \infty]^X \rightarrow [0, + \infty]^X$ the operator defined by
\begin{align}\label{eq: Tinfty}
    T^\infty u(x):= \lim_{n\to\infty} T^n u(x).
\end{align}
Observe that, thanks to Proposition~\ref{prop: Tv=v}, the operator $T^\infty$ is well defined. 
Moreover, if $u \in [0, + \infty]^X$ is proper, then $Tu(X) \subset  \R$. 
Also, a direct computation gives us that, for each $n \in \N$, 

\begin{align}\label{eq: Tnu}
T^n u(x)=\inf\left\{\dfrac{u(x_n)}{\prod_{k=0}^{n-1} \left( 1+\lambda d(x_k,x_{k+1}) \right)} + \sum_{j=0}^{n-1} \dfrac{\ell(x_j)d(x_j,x_{j+1})}{\prod_{k=0}^{j} \left( 1+\lambda d(x_k,x_{k+1}) \right)} \right\},
\end{align}
where the infimum is taken over all finite sequences $\{ x_k \}_{k = 0}^{n} \subset X$ with $x_0 = x$. The main result of this section reads as follows.

\begin{theorem}\label{thm: existence}
    Suppose that $u$ is a supersolution of~\eqref{eqn: Gs}. 
    Then, the function $T^\infty u$ is a solution of~\eqref{eqn: Gs}.
\end{theorem}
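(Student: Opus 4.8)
The plan is to show that $v := T^\infty u$ is simultaneously a subsolution and a supersolution of~\eqref{eqn: Gs}. Since $u$ is a supersolution, it is proper (as $\inf_X u = 0$), hence by the remark following~\eqref{eq: Tinfty} we have $Tu(X) \subset \R$, so $v$ is real-valued. By Proposition~\ref{prop: Tv=v}, $v$ satisfies $Tv = v$, and since $\inf_X u = 0$ implies $\inf_X T^n u = 0$ for all $n$, the monotone pointwise limit gives $\inf_X v = 0$. Then Proposition~\ref{prop: T subsolution} immediately yields that $v$ is a subsolution of~\eqref{eqn: Gs}. In particular, by Proposition~\ref{prop: 21}, $\lambda v + G[v] \leq \ell$ together with local boundedness considerations shows $v$ is lsc (or one argues lower semicontinuity directly from $G[v] \leq \ell < +\infty$ pointwise via Proposition~\ref{prop: 21}).

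The substance of the proof is the reverse inequality $\lambda v(x) + G[v](x) \geq \ell(x)$ for every $x \in X$. First I would establish the key monotonicity fact that $v \leq u$: since $u \geq 0$ and $Tu \leq u$ whenever... — actually we need $u \geq Tu$, which holds for any $u$, but to iterate we need $T$ monotone and $u$ to dominate its iterates; since $u$ is a supersolution the displayed inequality before~\eqref{oper-T} gives $u \geq Tu$, hence $u \geq T^n u$ for all $n$ by monotonicity of $T$, so $v = T^\infty u \leq u$. The crucial point is then: at a fixed $x$, since $v$ is a subsolution we have $\lambda v(x) + G[v](x) \leq \ell(x)$; suppose for contradiction that this inequality is strict, i.e. $\lambda v(x) + G[v](x) < \ell(x)$. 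I want to derive that then $v$ could be strictly decreased at $x$ while remaining a subsolution, contradicting that $v$ is the (maximal) fixed point obtained as the limit — but more carefully, I would instead show directly that strictness propagates a contradiction with $v = Tv$ combined with $v \leq u$ and the supersolution property of $u$.

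The cleaner route for the supersolution inequality: fix $x$ and let $\varepsilon > 0$ with $\lambda v(x) + G[v](x) + \varepsilon < \ell(x)$, assuming strictness. Using $Tv = v$, for each $y \neq x$ we have $v(x)(1 + \lambda d(x,y)) \leq v(y) + \ell(x)d(x,y)$, i.e. $\frac{(v(x) - v(y))_+}{d(x,y)} \leq \ell(x) - \lambda v(x)$ for all $y$, which only recovers $G[v](x) \leq \ell(x) - \lambda v(x)$, the subsolution bound again. So the fixed-point property alone cannot give the supersolution inequality — this is exactly the obstacle flagged in the introduction ("fix points of $T_\lambda$ only characterize subsolutions"). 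The resolution must use that $v$ arises as $T^\infty u$ from a \emph{supersolution} $u$. So I would argue: suppose $\lambda v(x_0) + G[v](x_0) < \ell(x_0)$ at some $x_0$. Build, via the infimum-defining formula~\eqref{eq: Tnu} and a diagonal/limiting argument, an optimal or near-optimal discrete path $\{x_k\}$ from $x_0$ realizing $v(x_0)$, along which the "telescoping" relations $v(x_k)(1 + \lambda d(x_k, x_{k+1})) \approx v(x_{k+1}) + \ell(x_k) d(x_k, x_{k+1})$ hold up to small errors; compare this with the supersolution chain for $u$, namely $u(x_k)(1 + \lambda d) \geq$ the same expression with $u$, and use $v \leq u$ plus $\inf v = 0$ to force $v(x_0) = u(x_0)$-type rigidity that contradicts strict subsolution slack at $x_0$ — essentially showing the slack would let us strictly lower $v$ at $x_0$, contradicting minimality of $v$ among functions $\leq u$ that are fixed points dominating $0$.

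The main obstacle is precisely this last step: converting the \emph{strict} subsolution inequality at one point into a global contradiction, since the naive fixed-point manipulation is insufficient (it reproduces only the subsolution bound). I expect the argument to require extracting a genuinely optimal (not merely near-optimal) trajectory or, failing compactness, a careful $\varepsilon_n \to 0$ perturbation argument: define $\tilde v$ agreeing with $v$ off a small ball and strictly below $v$ at $x_0$, check $T\tilde v \geq$ something that still dominates $0$ and is $\leq u$, then observe $T^\infty$ applied to $\min\{u, \tilde v\}$ would yield a fixed point strictly below $v$ at $x_0$ while still $\leq u$ — but any fixed point of $T$ that is $\leq u$ and $\geq 0$ must be $\geq v$ since $v = T^\infty u$ is the \emph{largest} such (being the decreasing limit started from the largest candidate $u$), giving the contradiction. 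Verifying that $\min\{u,\tilde v\}$, or its $T$-iterates, stay above $0$ and that the resulting limit is a legitimate competing fixed point is the delicate bookkeeping I would need to carry out in full.
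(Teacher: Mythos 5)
Your subsolution half is correct and coincides with the paper's Claim~1: $Tv=v$ by Proposition~\ref{prop: Tv=v}, $\inf_X v=0$, and Proposition~\ref{prop: T subsolution} gives $\lambda v+G[v]\leq\ell$ (lower semicontinuity of $v$ then follows from Proposition~\ref{prop: 21}). You also correctly diagnose the obstacle: the fixed-point identity $Tv=v$ only reproduces the subsolution bound, so the supersolution inequality must come from the fact that the iteration \emph{starts} from a supersolution $u$.

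However, your proposed resolution of that obstacle contains a genuine logical error. You argue that slack at $x_0$ would let you build a fixed point of $T$ lying strictly below $v$ at $x_0$ while staying $\leq u$, and that this contradicts the maximality of $v=T^\infty u$. But maximality says every subsolution below $u$ is $\leq v$ (this is Theorem~\ref{thm: T characterization}); it does \emph{not} say such functions are $\geq v$. The constant function $0$ is already a fixed point of $T$ with $\inf=0$ lying below $v$ everywhere, so exhibiting a competing fixed point strictly below $v$ at one point yields no contradiction whatsoever --- your sentence asserting that any such fixed point ``must be $\geq v$ since $v$ is the largest such'' has the inequality backwards. The remaining sketch (near-optimal paths plus an unspecified ``rigidity'') is not carried out and, as written, does not use the lower semicontinuity of $u$, which is essential. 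The paper's actual argument takes near-optimal chains $\{x_i^n\}_{i=0}^n$ for $T^nu(x)$ and splits into cases according to $\epsilon_n=\max_i d(x,x_i^n)$: if the chains collapse to $x$, the lower semicontinuity of $u$ forces $v(x)\geq u(x)$ (reducing to the case $v(x)=u(x)$, where $v\leq u$ and $v(x)=u(x)$ give $G[v](x)\geq G[u](x)$) or, when $\lambda>0$, directly yields $\lambda v(x)\geq\ell(x)-\rho$; if the chains escape a ball $B_\delta(x)$ on which $\ell\geq\ell(x)-\rho$, the first exit point $x_{m_n}^n$ produces a difference quotient bounding $G[v](x)$ from below by $\ell(x)-\rho-\lambda v(x)$ up to vanishing errors. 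None of this case analysis, nor the use of lower semicontinuity of $u$ and $\ell$, appears in your proposal, so the supersolution half remains unproved.
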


\begin{proof}
    For the sake of brevity, we denote by $v:X\to\R$ the function defined by
    \[v(x):=T^\infty u (x) = \lim_{n\to\infty}T^nu(x).\]
    Observe that $v\geq 0$ and that $v(X)\subset \R$ (because $u$ is proper).\\
    
    \textbf{Claim 1}: \textit{$v$ is a subsolution of \eqref{eqn: Gs}}.\\    
    Indeed, by Proposition~\ref{prop: Tv=v} we have that $Tv=v$, and since $\inf_X u = 0$, we have that $\inf_X v  = 0$. Therefore, \Cref{prop: T subsolution} implies that $v$ is a subsolution. 
    
    \medskip
    
    \textbf{Claim 2}: \textit{$v$ is a supersolution of \eqref{eqn: Gs}}.\\    
    Since $v$ is a subsolution of~\eqref{eqn: Gs}, it follows that $v$ is lsc and $\inf_X v=0$. So, we only need to show that $\lambda v+G[v]\geq \ell$.
    Fix $x\in X$.
    For every $n\in \N$, consider $\{x_i^n\}_{i=0}^n\in X$ given by~\eqref{eq: Tnu} such that $x_0^n=x$ and 
    \begin{align}\label{eq: Tnu aprox}
    T^n u(x)+\frac{1}{n} \geq \dfrac{u(x_n^n)}{\prod_{k=0}^{n-1}  \left( 1+\lambda d(x_k^n,x_{k+1}^n) \right)}  + \sum_{j=0}^{n-1} \dfrac{\ell(x_j^n)d(x_j^n,x_{j+1}^n)}{\prod_{k=0}^{j} \left( 1+\lambda d(x_k^n,x_{k+1}^n) \right)}.    
    \end{align}
    For the sake of brevity, we shall write
        \begin{align*}
            A_j^n := \dfrac{1}{\prod_{k = 0}^j \left( 1 + \lambda d(x_k^n, x_{k + 1}^n) \right)}, \text{ for every } j < n.
        \end{align*}
    Note that the finite sequence $\{x_i^n\}_{i=0}^n$ depends on $n \in \N$. 
    To prove that $v$ is a supersolution, we split the analysis into two cases.
    \smallskip \\  
    \textbf{Case 1:} $v(x)=u(x)$. 
    Since $v \leq u$, we immediately get that $G[v](x)\geq G[u](x)$. 
    Thus, 
    \[\lambda v(x)+G[v](x)\geq \lambda u(x)+G[u](x)\geq \ell(x).\]
    
    \textbf{Case 2:} $v(x)<u(x)$.
    We again split the analysis into two cases. 
    To do this, let us set, for all $n \in \N$,
    \[{\epsilon_n = \max \left\{d(x,x_i^n):~i=1,...,n \right\} }.\]

    \textbf{Case 2.1:} $\textstyle\liminf_{n\to\infty} \epsilon_n=0$.
    Up to a subsequence, we assume that $\lim_{n\to\infty}\epsilon_n=0$.
    Notice that $\{x_n^n\}_n$ converges to $x$. 
    Let us assume that $\lambda = 0$. 
    Then, $A_j^n = 1$ for every $n \in \N$ and $j < n$. 
    This leads to 
    \[v(x)=\lim_{n\to\infty} T^nu(x)\geq \lim_{n\to\infty} u(x_n^n) - \dfrac{1}{n} + \sum_{j=0}^{n-1} \ell(x_j^n)d(x_j^n,x_{j+1}^n)\geq\liminf_{n\to\infty} u(x_n^n)\geq u(x),  \]
    where the last inequality follows because $u$ is lsc. 
    Therefore, $v(x)=u(x)$, which contradicts the assumption of Case~$2$.
    Now, assume that $\lambda > 0$.
    Observe that, thanks to a telescopic sum, we have
    \begin{align}\label{eq: tri equality}
        \sum_{j=0}^{n-1} A_j^n d(x_j^n,x_{j+1}^n) = \dfrac{1}{\lambda}\left( 1-A_{n - 1}^n \right). 
    \end{align}
    Let $\rho>0$.
    Recalling that $\ell$ is lsc, consider $\delta>0$ such that $\ell(y)\geq \ell(x)-\rho$ for all $y\in B_{\delta}(x)$.
    Let $n\in \N$ be large enough such that $\epsilon_n<\delta$.
    Then,
    \begin{align*}
        T^nu(x)+\dfrac{1}{n}&\geq A_{n - 1}^n u(x_n^n) + \sum_{j=0}^{n-1} \ell(x_j^n) A_j^n d(x_j^n,x_{j+1}^n) \\
        &\geq A_{n - 1}^n u(x_n^n) + \dfrac{\ell(x)-\rho}{\lambda} \left(1-A_{n - 1}^n \right)\\
        &\geq \dfrac{\ell(x)-\rho}{\lambda} + A_{n - 1}^n \dfrac{\lambda u(x_n^n)-\ell(x)+\rho}{\lambda},
    \end{align*}
    where in the second line we have used~\eqref{eq: tri equality}.
   We define two sets of indexes
        \begin{align*}
            J_1 := \left\{ n \in \N: \lambda u(x_n^n) \geq \ell(x) - \rho \right\} \text{ and } J_2 := \N \setminus J_1.
        \end{align*}
   Since $J_1 \cup J_2 = \N$, at least one of these sets has countably infinitely many elements. If $J_2$ has countably infinitely many elements, using the fact that $A_{n - 1}^n \leq 1$ and $\lambda u(x_n^n) < \ell(x) - \rho$ for every $n \in J_2$, we have
   \begin{equation}
       T^nu(x)+\dfrac{1}{n}\geq \dfrac{\ell(x)-\rho}{\lambda}+\dfrac{\lambda u(x_n^n)-\ell(x)+\rho}{\lambda}, \text{ for all } n \in J_2.
   \end{equation}
   Therefore, taking limit inferior as $n \rightarrow \infty$ and using the lower semicontinuity of $u$, we get 
   \[ v(x) \geq \liminf_{n \to \infty} u(x_n^n) \geq u(x), \] which contradicts the assumption of Case 2. Hence, the set $J_2$ has finitely many elements and so $J_1$ has infinitely many elements. 
   We then directly get
    \begin{align*}
        T^nu(x)+\dfrac{1}{n}\geq \dfrac{\ell(x)-\rho}{\lambda}, \text{ for all } n \in J_1.
    \end{align*}
    Sending $n$ to infinity and then $\rho$ to $0$, we obtain that $\lambda v(x)\geq \ell(x)$. 
    Since $G[v](x)\geq 0$, it follows that $v$ is a supersolution of~\eqref{eqn: Gs}.\\

    \textbf{Case 2.2:} $\textstyle\liminf_{n \rightarrow \infty} \epsilon_n>0$. 
    Let $r>0$ be such that $\textstyle\liminf_{n \to \infty} \epsilon_n >r$. 
    Let $N\in \N$ be large enough such that $\epsilon_n>r$ for all $n\geq N$.
    Fix $n\geq N$.
    Thanks to the lower semicontinuity of $\ell$, let $\rho >0$ and ${\delta\in(0,r)}$ be such that $\ell(y)>\ell (x)-\rho$ for every $y \in B_{\delta}(x)$. 
    Consider now 
    \[m_n:=\inf \left\{ m\in\{1,..,n\}:~ d(x,x_m^n) > \delta \right\}.\]
    Note that, by the definition of $\delta$, we have that $1 < m_n \leq n$.
%    Define 
%    \[B_n=\dfrac{1}{\prod_{k=0}^{m_n-1} 1+\lambda d(x_k^n,x_{k+1}^n)}, ~\text{for all }n\in \N. \]
    Fix $\sigma>0$.
    Followed by \eqref{eq: Tnu aprox}, for $n$ large enough we have
    \begin{align}\notag
    v(x)+\sigma\geq & ~ T^n u(x)+\frac{1}{n}\\ \label{eq: 3.7}
     \geq & ~\left ( A_{n - 1}^n u(x_n^n) + \sum_{j=m_n}^{n-1} \ell(x_j^n)A_j^nd(x_j^n,x_{j+1}^n) \right) + \sum_{j=0}^{m_n-1} \ell(x_j^n)A_j^n d(x_j^n,x_{j+1}^n).
    \end{align}
    Let us assume that $\lambda=0$. Thus, $A_j^n =1$ for all $j<n$ and therefore, combining  inequality~\eqref{eq: 3.7}, triangle inequality and the definition of the operator $T$, we deduce
    \begin{align*}
        v(x)+\sigma \geq T^{n-m_n}u(x_{m_n})+ (\ell(x)-\rho)d(x,x_{m_n})\geq v(x_{m_n})+ (\ell(x)-\rho)d(x,x_{m_n}).
    \end{align*}
    So, rearranging the terms of the above inequality and recalling that $d(x,x_{m_n})\geq \delta$, we get
    \begin{align*}
        G[v](x)+\dfrac{\sigma}{\delta} \geq \dfrac{v(x)-v(x_{m_n})}{d(x,x_{m_n})}+\dfrac{\sigma}{d(x,x_{m_n})} \geq  \ell(x)-\rho.
    \end{align*}
    So, after sending $\sigma$ to $0$ and then sending $\rho$ to $0$, we conclude that $G[v](x)\geq \ell(x)$. 
    This shows that $v$ is a supersolution of~\eqref{eqn: G0}.

    \medskip
    
    Assume now that $\lambda>0$. From \eqref{eq: tri equality} and \eqref{eq: 3.7}, we get
    \begin{align}\notag
     v(x)+\sigma\geq &  ~ A_{m_n - 1}^n T^{n-m_n} u(x_{m_n}) + \frac{\ell(x)-\rho}{\lambda}\left(1 - A_{m_n - 1}^n \right)\\ \label{eq: 3.8}
     \geq & ~ A_{m_n - 1}^n v(x_{m_n}) + \frac{\ell(x)-\rho}{\lambda}\left(1- A_{m_n - 1}^n \right).   
    \end{align}
    Since $A^n_{m_n - 1} < 1$, above inequality leads to
    \begin{align}\label{eq: hola}
    \dfrac{\lambda}{1- A_{m_n - 1}^n} (v(x)-v(x_{m_n}))\geq \ell(x)-\rho -\lambda v(x_{m_n})-\dfrac{\lambda \sigma}{1- A_{m_n - 1}^n}.
    \end{align}
    Observe that 
    \[A_{m_n - 1}^n \leq \dfrac{1}{1+\lambda d(x,x_{m_n}^n)}. \]
    Hence, we have the estimates
    \begin{align}\label{eq: chao}
        \dfrac{1}{\lambda} (1 - A_{m_n - 1}^n) \geq \dfrac{d(x, x^n_{m_n })}{1 + \lambda d(x, x^n_{m_n })},\quad \text{and so }\quad \dfrac{\lambda}{1 - A_{m_n - 1}^n} \leq \lambda + \dfrac{1}{\delta}.
    \end{align}
    We need to divide again the argument into two cases.\\
    
    \textbf{Case 2.2.1:} $v(x)<v(x_{m_n}^n)$ for infinitely many $n\in \N$. 
    Let $n\in \N$ for which $v(x)<v(x_{m_n}^n)$.
    Then, since $A_{m_n - 1}^n<1$, inequality~\eqref{eq: 3.8} becomes
    \begin{align*}
        v(x)+\dfrac{\sigma}{1-A^n_{m_n-1}} \geq \dfrac{\ell(x)-\rho}{\lambda}.
    \end{align*}
    Plugging in the inequality~\eqref{eq: chao}, we obtain
    \begin{align}~\label{eq: 3.11}
        v(x)+\sigma\left(1+\dfrac{1}{\lambda\delta} \right)\geq \dfrac{\ell(x)-\rho}{\lambda}.
    \end{align}
    Noting that we can set $n$ as large as we want, 
    inequality~\eqref{eq: 3.11} is valid for all $\sigma >0$. After sending $\sigma$ to $0$, we have
        \begin{equation}\label{v.01}
            v(x) \geq \dfrac{\ell(x) - \rho}{\lambda}. 
        \end{equation}
    %we can send $\rho$ to $0$ as well, to finally get that $\lambda v(x)\geq \ell(x)$. 
    %This shows that $v$ is a supersolution of \eqref{eqn: Gs}.
    
    \medskip

    \textbf{Case 2.2.2:} $v(x)<v(x_{m_n}^n)$ for finitely many $n\in \N$.
    So, we can (and shall) assume that $n$ is large enough such that $v(x)\geq v(x_{m_n}^n)$.
    Using inequalities~\eqref{eq: hola} and~\eqref{eq: chao}, we deduce that
    \begin{equation}\label{Iqn.03}
        \left( \lambda + \dfrac{1}{d(x, x^n_{m_n })} \right) \left( v(x) -
        v(x^n_{m_n }) \right) \geq \ell(x) - \rho - \lambda v(x^n_{m_n }) - \sigma \left(\lambda +\dfrac{1}{\delta} \right).
    \end{equation} Rearranging terms in the inequality \eqref{Iqn.03}, we finally obtain
    \begin{equation}\label{eq: 3.13}
        \lambda v(x) + G[v](x) \geq \ell(x) - \rho - \sigma\left(\lambda +\dfrac{1}{\delta} \right).
    \end{equation}

Sending $\sigma$ to $0$, we conclude that 
    \begin{equation}\label{v.02}
        \lambda v(x) + G[v](x) \geq \ell(x) - \rho.
    \end{equation}
Combining Case 2.2.1 and Case 2.2.2, that is, \eqref{v.01} and \eqref{v.02}, we obtain
    \begin{align*}
        \lambda v(x) + G[v](x) \geq \ell(x) - \rho, 
    \end{align*}
independently of the behavior of the sequence $\{ v(x^n_{m_n}) \}_n$. Therefore, we can send $\rho$ to $0$ and obtain that $v$ is a super solution. 
The proof of Theorem~\ref{thm: existence} is complete.
\end{proof}
\begin{remark}~{}{\empty}\label{rmk: projection}\normalfont Proposition~\ref{prop: Tv=v} and Theorem~\ref{thm: existence} can be rephrased as follows: the operator $T^\infty$ defined on the spaces functions $f:X\to\R$ such that $\inf_X f=0$ is a projection onto the set of subsolutions of~\eqref{eqn: Gs}. Moreover, the set of supersolutions of~\eqref{eqn: Gs} is projected onto the set of solutions of the same equation.
\end{remark}

The next corollary collects the results about the existence of solutions of equation~\eqref{eqn: Gs}. Its proof follows directly from Proposition~\ref{prop.AC-seq}, Proposition~\ref{prop.super} and Theorem~\ref{thm: existence}. 

\begin{corollary}\label{thm.exists.full}
    Let $X$ be a metric space and let $\ell:X\to [0,+\infty)$ be a lsc function such that $\textstyle\inf_X \ell = 0$. The following assertions hold true
        \begin{itemize}
            \item[$(i)$] if $\lambda > 0$,  then equation~\eqref{eqn: Gs} has at least one solution  in $\LSC(X)$;
            \item[$(ii)$] if $(H_0)$ holds, then equation~\eqref{eqn: G0} has at least one solution in  $\LSC(X)$.
        \end{itemize}

    In addition, if $X$ is complete, then equation~\eqref{eqn: G0} has a solution if and only if the assumption $(H_0)$ holds.
\end{corollary}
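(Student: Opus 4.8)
The strategy is to assemble the three ingredients already established: the construction of supersolutions (Proposition~\ref{prop.super}), the fact that $T^\infty$ sends supersolutions to solutions (Theorem~\ref{thm: existence}), and the necessity of $(H_0)$ in the complete case (Proposition~\ref{prop.AC-seq}). First I would treat $(i)$: if $\lambda>0$, Proposition~\ref{prop.super}$(a)$ gives that $u=\ell/\lambda$ is a supersolution of~\eqref{eqn: Gs}; since $\ell$ is real-valued, $u$ is proper, hence by Theorem~\ref{thm: existence} the function $T^\infty u$ is a solution of~\eqref{eqn: Gs}, and by Proposition~\ref{prop: 21} (or directly from being a subsolution, so $G[T^\infty u]$ is finite) it lies in $\LSC(X)$. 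Next, for $(ii)$, assume $(H_0)$ holds; then Proposition~\ref{prop.super}$(b)$ produces a supersolution $u$ of~\eqref{eqn: G0}. Here a small care is needed: in the case $[\ell=0]=\emptyset$ the supersolution built in Proposition~\ref{prop.super}$(b)$ is real-valued (it equals a convergent tail sum on the injective sequence $\{\bar x_n\}_n$ and $+\infty$ elsewhere), so it is proper; in the case $[\ell=0]\neq\emptyset$ the supersolution is the $\{0,+\infty\}$-valued indicator, which is likewise proper. In either case Theorem~\ref{thm: existence} (with $\lambda=0$) applies and $T^\infty u$ is a solution of~\eqref{eqn: G0}, again lsc by Proposition~\ref{prop: 21}.

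For the final equivalence, suppose $X$ is complete. One direction is $(ii)$: if $(H_0)$ holds then~\eqref{eqn: G0} has a solution. Conversely, suppose $u$ is a solution of~\eqref{eqn: G0}; then $u\in\LSC(X)$, $\inf_X u=0$, so $u$ is bounded from below, and $G[u]=\ell$ is everywhere finite, so $u$ is proper. Applying Proposition~\ref{prop.AC-seq} to $u$ yields a sequence $\{z_n\}_n$ with $\lim_n G[u](z_n)=0$ and $\sum_n G[u](z_n)d(z_n,z_{n+1})<+\infty$; since $G[u]=\ell$ pointwise, the sequence $\bar x_n:=z_n$ witnesses exactly~\eqref{eq: hyp}, i.e. $\ell$ satisfies $(H_0)$. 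This closes the loop.

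I do not expect any genuine obstacle here: the corollary is a bookkeeping consequence of the three cited results, and the only point requiring attention is checking that the supersolutions furnished by Proposition~\ref{prop.super} are proper so that Theorem~\ref{thm: existence} is applicable (which it notes explicitly: "if $u\in[0,+\infty]^X$ is proper, then $Tu(X)\subset\R$"), and that a solution of~\eqref{eqn: G0} is automatically a bounded-from-below proper lsc function so that Proposition~\ref{prop.AC-seq} applies. The write-up is therefore short: state the three implications, invoke the propositions, and note the properness checks.
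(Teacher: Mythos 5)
Your proposal is correct and follows exactly the route the paper indicates: the paper states that the corollary "follows directly from Proposition~\ref{prop.AC-seq}, Proposition~\ref{prop.super} and Theorem~\ref{thm: existence}," and you assemble precisely these three ingredients in the same way, with the added (and appropriate) care about properness of the supersolutions and of a solution. No gaps.
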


In the case when $\lambda = 0$ and under the assumption~\eqref{eq: hyp}, we can provide an explicit formula of a solution of~\eqref{eqn: G0}. 
In Section~\ref{sec: viscosity} we show that this is the largest solution of~\eqref{eqn: G0}. 

\begin{proposition}\label{prop.solu_series}
Suppose that $(H_0)$ is fulfilled. Define the function $u_0: X \rightarrow [0, + \infty)$ as follows 
    \begin{equation}\label{eqn.series}
        u_0(x) := \inf \left\{ \sum_{n = 0}^{\infty} \ell(x_n)d(x_n, x_{n + 1}): \{ x_n \}_n \in \mathcal{K}(x) \right\}, \text{ for all } x \in X,
    \end{equation}
where $\mathcal{K}(x) := \{ \{ x_n \}_n \subset X : x_0 = x \text{ and } \lim_{n \rightarrow \infty} \ell(x_n) = 0 \}$. Then, $u$ is a solution of~\eqref{eqn: G0}. 
Moreover, one can replace $\mathcal{K}(x)$ in~\eqref{eqn.series} by
    \begin{align*}
        \widehat{\mathcal{K}}(x) := \{ \{ x_n \}_n \subset X : x_0 = x \text{, $\{ \ell(x_n) \}_n$ is decreasing and } \lim_{n \rightarrow \infty} \ell(x_n) = 0 \}.
    \end{align*}  
\end{proposition}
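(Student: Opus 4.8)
The strategy is to identify the candidate function $u_0$ defined by the series formula with $T_{0,\ell}^\infty v$ for a suitable supersolution $v$ of \eqref{eqn: G0}, and then invoke Theorem~\ref{thm: existence}. Indeed, by Proposition~\ref{prop.super}$(b)$, under $(H_0)$ the equation \eqref{eqn: G0} admits a supersolution $v$; by Theorem~\ref{thm: existence}, $T_{0,\ell}^\infty v$ is a solution of \eqref{eqn: G0}. So the task reduces to two things: first, computing $T_{0,\ell}^\infty v$ explicitly and checking it agrees with $u_0$; second, verifying that $u_0$ is in fact independent of the particular supersolution $v$ used (so that the formula is intrinsic). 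For the first point, recall from \eqref{eq: Tnu} with $\lambda = 0$ that
\begin{align*}
    T_{0,\ell}^n v(x) = \inf\left\{ v(x_n) + \sum_{j=0}^{n-1} \ell(x_j)d(x_j,x_{j+1}) : x_0 = x \right\},
\end{align*}
the infimum over finite sequences of length $n$. Taking $n\to\infty$, the term $v(x_n)$ can be pushed to $0$ along a sequence realizing $\lim_n \ell(x_n) = 0$ (using Proposition~\ref{prop.TZ} together with the fact that $T_{0,\ell}^\infty v$ is a subsolution, hence has finite global slope, so $\inf v$ is approached along the tail — or more directly, since $v$ is the explicit supersolution constructed in Proposition~\ref{prop.super}$(b)$, $\inf_X v = 0$ and we can append the $(H_0)$-sequence). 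This should give $T_{0,\ell}^\infty v(x) = u_0(x)$.

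A cleaner route, which I would actually take, is to prove directly that $u_0$ is a solution, bypassing the identification with $T^\infty v$. Concretely: (1) $u_0 \geq 0$ is clear since $\ell \geq 0$; (2) $\inf_X u_0 = 0$ because along an $(H_0)$-sequence $\{\bar x_n\}$ one has $u_0(\bar x_k) \leq \sum_{n\geq k}\ell(\bar x_n)d(\bar x_n,\bar x_{n+1}) \to 0$; (3) $u_0$ is a subsolution, i.e. $G[u_0] \leq \ell$: this follows from the near-subadditivity $u_0(x) \leq \ell(x)d(x,y) + u_0(y)$ for all $x,y$, which is immediate from the definition (prepend $x$ to any competitor sequence for $u_0(y)$, noting that $\lim \ell(x_n) = 0$ is preserved), giving $(u_0(x)-u_0(y))_+ / d(x,y) \leq \ell(x)$; (4) the reverse inequality $G[u_0] \geq \ell$, which is the real content. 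For (4), fix $x$ and $\rho > 0$; by lower semicontinuity of $\ell$ choose $\delta > 0$ with $\ell(y) > \ell(x) - \rho$ on $B_\delta(x)$. Given a near-optimal competitor $\{x_n\}$ for $u_0(x)$, either the sequence stays in $B_\delta(x)$ forever — in which case $\lim\ell(x_n) = 0$ forces $\ell(x) \leq \rho$ by lower semicontinuity applied at accumulation points, a degenerate case — or it exits at some first index $m$ with $d(x,x_m) \geq \delta$; then $u_0(x) + \varepsilon \geq (\ell(x)-\rho)d(x,x_m) + u_0(x_m)$, which rearranges (dividing by $d(x,x_m) \geq \delta$) to $G[u_0](x) \geq (u_0(x)-u_0(x_m))/d(x,x_m) \geq \ell(x) - \rho - \varepsilon/\delta$; sending $\varepsilon, \rho \to 0$ gives $G[u_0](x) \geq \ell(x)$. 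This mirrors Case 2.2 (with $\lambda = 0$) of the proof of Theorem~\ref{thm: existence}, so it is not new machinery.

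Finally, $u_0$ is lsc: this follows either from Proposition~\ref{prop: 21} once we know $G[u_0]$ is everywhere finite — which it is, being bounded by $\ell$ wherever $\ell$ is finite, and one checks $u_0 < +\infty$ everywhere since prepending $x$ to any fixed $(H_0)$-sequence gives a finite competitor — or directly from the subadditivity estimate in (3) combined with $\inf u_0 = 0$. The last assertion, that $\mathcal{K}(x)$ may be replaced by $\widehat{\mathcal{K}}(x)$, follows from a monotone rearrangement: given any competitor $\{x_n\} \in \mathcal{K}(x)$ with value close to $u_0(x)$, one extracts a subsequence $\{x_{n_k}\}$ along which $\ell(x_{n_k})$ is decreasing to $0$ (possible since $\lim_n \ell(x_n) = 0$ implies $\inf$ over tails decreases to $0$, and one can pick successive near-infimizers), and then $\sum_k \ell(x_{n_k})d(x_{n_k},x_{n_{k+1}}) \leq \sum_k \ell(x_{n_k})\sum_{j=n_k}^{n_{k+1}-1} d(x_j,x_{j+1}) \leq \sum_j \ell(x_j)d(x_j,x_{j+1})$, using the triangle inequality and that $\ell(x_{n_k}) \leq \ell(x_j)$ for $n_k \leq j < n_{k+1}$ after possibly thinning further; since the reverse inclusion $\widehat{\mathcal{K}}(x) \subset \mathcal{K}(x)$ is trivial, the two infima coincide. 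The main obstacle is getting step (4) fully rigorous in the degenerate case where the near-optimal sequences never leave $B_\delta(x)$ — but as in Theorem~\ref{thm: existence}, Case 2.1, one argues that this situation forces $u_0(x) = \lim u_0(x_n)$ with $x_n \to x$, and combined with $\inf u_0 = 0$ along the tail plus lower semicontinuity of $\ell$, one extracts $\ell(x) \leq \rho$, which is the only way consistency can fail and is resolved by letting $\rho \to 0$.
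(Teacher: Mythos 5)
Your proposal is correct and follows essentially the same route as the paper: $\inf_X u_0=0$ via the $(H_0)$-sequence, the subsolution property via the subadditivity $u_0(x)\le \ell(x)d(x,y)+u_0(y)$ (which is exactly the paper's identity $Tu_0=u_0$ combined with Proposition~\ref{prop: T subsolution}), the supersolution property by repeating the Case~2 computation of Theorem~\ref{thm: existence} with $\lambda=0$, and the passage to $\widehat{\mathcal{K}}(x)$ by extracting a subsequence along which $\ell$ decreases and applying the triangle inequality. Your treatment of the degenerate case (a near-optimal sequence never leaving $B_\delta(x)$ forces $\ell(x)\le\rho$ since $\ell(x_n)\to 0$) is a clean and valid shortcut of the paper's Case~2.1.
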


\begin{proof} Due to the assumption \eqref{eq: hyp}, it follows that $u_0(x)$ is well defined for every $x \in X$.  Let $\{ \bar x_n \}_n$ be a sequence given by \eqref{eq: hyp}. By definition of $u_0$, we know that
    \begin{align*}
        u_0(\bar x_n) \leq \sum_{k=n}^{\infty } \ell(\bar x_k) d(\bar x_k, \bar x_{k + 1}), \text{ for all } n \in \N,
    \end{align*}
which leads to $\inf_X u_0 = 0$. On the other hand, for any fixed $x\in X$, we note that 
\begin{align*}
        u_0(x) &= \inf \left\{ \sum_{n = 0}^{\infty} \ell(x_n)d(x_n, x_{n + 1}): \{ x_n \}_n \in \mathcal{K}(x) \right\} \\
        &=\inf \left\{ u(x_1)+\ell(x)d(x,x_1):~ x_1\in X \right\} = Tu_0(x). 
    \end{align*}
    
So, thanks to Proposition~\ref{prop: T subsolution}, $u_0$ is a subsolution of~\eqref{eqn: G0}. Lastly, replicating the computations made in the proof of Theorem~\ref{thm: existence}, Claim 2, for $\lambda=0$, we deduce that $u_0$ is a supersolution and hence it is a solution of equation~\eqref{eqn: G0}. 

\medskip

It remains to check that one can replace $\mathcal{K}(x)$ by $\widehat{\mathcal{K}}(x)$ in the definition of  $u$. Set the function 
\begin{align*}
    \widehat{u}(x) = \inf \left\{ \sum_{n = 0}^{\infty} \ell(x_n)d(x_n, x_{n + 1}): \{ x_n \}_n \subset \widehat{\mathcal{K}}(x) \right\}.
\end{align*}
By definition of infimum, we have $\widehat{u} \geq u_0$. Fix $x \in X$. We show that $u_0(x) \geq \widehat{u}(x)$. Let $\varepsilon > 0$. By definition of infimum, there exists $\{ x_n \}_n \in \mathcal{K}(x)$ such that
    \begin{equation}\label{eqn.u_approx}
        u_0(x) > \sum_{n = 0}^\infty \ell(x_n) d(x_n, x_{n + 1}) - \varepsilon.
    \end{equation}
Set $n_0  = 0$ and inductively define 
    \begin{align*}
        n_k := \min \left\{ k > n_{k - 1}: \ell(x_k) < \ell(x_{n_{k - 1}}) \right\}, \quad \text{ for all } k \geq 1.
    \end{align*}
Since $\lim_{n \to \infty} \ell(x_n) = 0$, the sequence $\{ n_k \}_k$ is well defined and strictly increasing. On one hand, we have $\{ x_{n_k}\}_k \in \widehat{\mathcal{K}}(x)$. On the other hand, using triangle inequality, we deduce
    \begin{equation}\label{eqn.n_k}
        \sum_{j = n_{k - 1}}^{n_k - 1} \ell(x_j)d(x_j, x_{j + 1}) \geq \ell(x_{n_{k-1}}) \sum_{j = n_{k - 1}}^{n_k - 1} d(x_j, x_{j + 1}) \geq \ell(x_{n_{k-1}}) d(x_{n_{k - 1}}, x_{n_k}).
    \end{equation}
Combining~\eqref{eqn.u_approx}--~\eqref{eqn.n_k} and definition of $\widehat{u}$, we get
    \begin{align*}
        u_0(x) > \sum_{k = 0}^{\infty} \ell(x_{n_{k-1}}) d(x_{n_{k - 1}}, x_{n_k}) - \varepsilon \geq \widehat{u}(x) - \varepsilon.
    \end{align*}
Sending $\varepsilon$ to $0$, we conclude that $u_0(x) \geq \widehat{u}(x)$. This completes the proof.
\end{proof}

%%%%%%%%%%%%%%%%%%%%%%%%%%%%%%%%%%%%%%%%%%
    
\subsection{Pertinence of the definition}\label{subsec: pertinence} Since $\ell$ is real-valued, any subsolution of~\eqref{eqn: Gs} is lsc, however, the inequality $\lambda u + G[u] \geq \ell$ does not yield any regularity of $u$. In this subsection, we justify why we have chosen the space of lsc functions for the notion of supersolution. 
%One question naturally arises: \textit{What can be the minimal regularity of supersolutions?} As stated in Definition~\ref{def.P-solu}, we have chosen the space of lsc functions. In this small subsection we discuss about the relevance of this choice, particularly with respect to the validity of Theorem~\ref{thm: existence}.
In the following example, we show that there are a metric space $X$ and a nonnegative function $\psi:X\to\R$ such that 
$\lambda \psi+ G[\psi]\geq \ell$, but $T^\infty \psi$ is not a solution of~\eqref{eqn: Gs}, for any $\lambda \geq 0$. Consequently, \Cref{thm: existence}
does not hold in general for functions that only satisfy $\lambda u + G[u] \geq \ell$. 
\begin{example}\normalfont     
Let $\phi:[0,1]\to[0,1]$ be an everywhere surjective function, i.e. $\phi(U)=[0,1]$ for every nonempty open set $U\subset [0,1]$  
(for instance, see \cite[p. 90]{L1904}). 
Also, consider $f:[0,1]\to (0,1]$ defined by $f(0)=1$ and $f(x)=x$, if $x>0$. 
Therefore, the function $\psi:[0,1]\to[0,1]$ defined by 
\[\psi(x):=\begin{cases} 0&~\text{if }x=0,\\
                    f\circ \phi (x) &~\text{if }x>0,
\end{cases}\]
satisfies $\psi(U)=(0,1]$ for any open interval $U\subset (0,1]$. Hence, $G[\psi](0)=0$ and $G[\psi](x)=+\infty$ for all $x\in (0,1]$. 
Let $\ell:[0,1]\to [0, + \infty)$ be any lsc function such that $\ell(0)=0$ and $\ell(x)>0$ for all $x>0$ (for instance $\ell(x)=x$). 
So, for any $\lambda\geq 0$ we have
\begin{align*}
    \begin{dcases}
        \lambda \psi + G[\psi]\geq \ell,&~\text{ on } [0,1],\\
        \inf_{x\in [0,1]} \psi(x)=0.&
    \end{dcases}
\end{align*}
    Clearly $\psi$ is not lsc. 
    Moreover, since $\psi\geq 0$, we have that $\psi\geq T^\infty \psi \geq 0$.
    Thanks to Proposition~\ref{prop: T subsolution} and Proposition~\ref{prop: Tv=v}, $T^\infty \psi$ is a subsolution of~\eqref{eqn: Gs}. 
    So, $T^\infty \psi$ is lsc and we deduce that \[0=\mathrm{lsc}(\psi)\geq T^\infty \psi \geq 0,\] where $\mathrm{lsc}(\psi)$ denotes the lsc envelop of $\psi$. 
    Thus, $T^\infty \psi$ is not a solution of~\eqref{eqn: Gs}.
\end{example}

\subsection{Uniqueness}\label{sec: uni}

In this subsection, we provide partial results about the uniqueness of solutions of equation~\eqref{eqn: Gs}. To ensure this, it is crucial to assume that $X$ is a complete metric space. 
The proof of the main result of this subsection, \Cref{thm.CP}, is based on transfinite induction and it follows the ideas of~\cite{DLS_2023, DMS_2022}.
Let us start with the following result, which is a direct consequence of \Cref{prop: 21}, Theorem~\ref{thm: existence} and \cite[Corollary 4.1]{TZ_2023}. 

\begin{corollary}\label{cor: TZDSL}
    Let $X$ be a complete metric space and let $\ell \in \LSC(X)$ be a locally bounded function satisfying $\inf_X \ell = 0$. 
    Then~\eqref{eqn: G0} admits a unique continuous solution. 
\end{corollary}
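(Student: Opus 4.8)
The strategy is to combine existence, the uniqueness result from \cite{TZ_2023}, and the regularity transfer in \Cref{prop: 21}. First I would note that, since $\ell$ is locally bounded and $\inf_X \ell = 0$, Proposition~\ref{prop.AC-seq} (applied with the constant function $0$ as a trivial supersolution is not enough, so instead) applied to \emph{any} bounded-from-below lsc function, say to a suitable truncation, guarantees that the assumption $(H_0)$ holds: there is a sequence $\{\bar x_n\}_n$ with $\sum_n \ell(\bar x_n)d(\bar x_n,\bar x_{n+1})<+\infty$ and $\ell(\bar x_n)\to 0$. (Concretely, one can apply Ekeland's principle to the function $x\mapsto \ell(x)$ restricted near a minimizing sequence, or more simply invoke Proposition~\ref{prop.super}$(b)$ together with the necessity statement; the cleanest route is to verify $(H_0)$ directly from Ekeland as in the proof of Proposition~\ref{prop.AC-seq}.) Hence by Corollary~\ref{thm.exists.full}$(ii)$ equation~\eqref{eqn: G0} admits at least one solution $u \in \LSC(X)$.

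Next I would upgrade this solution to a \emph{continuous} one. Since $u$ solves \eqref{eqn: G0}, we have $G[u] = \ell$ pointwise, and $\ell$ is locally bounded by hypothesis; therefore $G[u]$ is locally bounded, and the third bullet of \Cref{prop: 21} gives that $u$ is locally Lipschitz, in particular continuous. So \eqref{eqn: G0} admits at least one continuous solution.

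For uniqueness I would invoke \cite[Corollary 4.1]{TZ_2023} (as flagged in the statement): on a complete metric space, a bounded-from-below lsc function is uniquely determined, up to the normalization $\inf_X u = 0$, by its global slope. Concretely, if $u_1,u_2$ are two solutions of \eqref{eqn: G0}, then $G[u_1] = \ell = G[u_2]$ and $\inf_X u_1 = 0 = \inf_X u_2$, and both are bounded from below (by $0$) and lsc; the cited determination result then forces $u_1 \equiv u_2$. Combining this with the previous paragraph yields that \eqref{eqn: G0} has a unique solution, and that this unique solution is continuous (indeed locally Lipschitz).

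\textbf{Main obstacle.} The only genuinely delicate point is establishing that $(H_0)$ holds under the hypotheses — i.e.\ producing the minimizing sequence with summable "weighted length". The constant $0$ is a subsolution but not obviously useful here, and $\ell$ itself need not be a supersolution of \eqref{eqn: G0}; the right move is to run Ekeland's variational principle directly on $\ell$ exactly as in the induction of Proposition~\ref{prop.AC-seq} (choosing $z_0$ with $G[\,\cdot\,]$-value small is not what we need; rather we pick points where $\ell$ itself decreases enough along the chain), which requires only completeness of $X$ and $\inf_X \ell = 0$. Once $(H_0)$ is in hand, everything else is a direct citation of results already proved: \Cref{thm.exists.full}, \Cref{prop: 21}, and \cite[Corollary 4.1]{TZ_2023}.
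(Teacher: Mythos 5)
Your uniqueness and regularity steps coincide exactly with what the paper intends: the corollary is presented there as a direct consequence of \Cref{prop: 21} (since $G[u]=\ell$ is locally bounded, every solution is locally Lipschitz, hence continuous), of the determination result of \cite{TZ_2023} (two solutions are lsc, bounded below, share the same global slope $\ell$ and the same infimum $0$, hence coincide), and of \Cref{thm: existence} for existence. On those two counts your argument is correct and is the paper's argument.

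The genuine gap is in the existence step, and you correctly sensed that this is the delicate point --- but your proposed fix does not work. Ekeland's principle applied to $\ell$ produces points $z$ with $\ell(y)>\ell(z)-\varepsilon\, d(z,y)$ for all $y\neq z$; this controls the \emph{decrease of $\ell$} along a chain and gives no control on the quantity $\sum_n \ell(\bar x_n)d(\bar x_n,\bar x_{n+1})$ required by \eqref{eq: hyp}. Likewise, \Cref{prop.AC-seq} applied to an arbitrary bounded-from-below lsc $u$ yields summability for $G[u]$, not for $\ell$; it yields \eqref{eq: hyp} only if $G[u]\geq\ell$ for some such $u$, which is essentially what one is trying to construct. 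In fact \eqref{eq: hyp} can fail under the hypotheses of the corollary: take $X=\{2^n: n\geq 1\}\subset\R$ with the induced metric (complete and discrete) and $\ell(2^n)=1/n$, which is lsc, locally bounded, with $\inf_X\ell=0$. Any sequence with $\ell(\bar x_k)\to 0$ must satisfy $\bar x_k=2^{n_k}$ with $n_k\to\infty$, and whenever $\bar x_{k+1}\neq\bar x_k$ one has $\ell(\bar x_k)d(\bar x_k,\bar x_{k+1})\geq 2^{\,n_k-1}/n_k\to\infty$, so the series in \eqref{eq: hyp} diverges for every admissible sequence; by the final assertion of \Cref{thm.exists.full}, equation \eqref{eqn: G0} then has \emph{no} solution at all. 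So no derivation of $(H_0)$ from completeness, local boundedness and $\inf_X\ell=0$ can exist, and the existence half of the corollary genuinely needs $(H_0)$ as an extra hypothesis (exactly as the authors add in \Cref{corol.unique.improve}). What your argument does establish correctly is the ``at most one solution, and it is continuous'' half, which is also the only way the corollary is invoked later in the paper.
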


A substantial improvement of the above corollary is given in Corollary~\ref{corol.unique.improve}, in which we consider $\ell: X \to [0 ,  + \infty]$ such that $\inf_X \ell = 0$. 
Besides, as a consequence of Corollary~\ref{cor: TZDSL}, we have the following characterization of complete metric spaces.

\begin{corollary}\label{cor: characterization}
    Let $X$ be a metric space and let $x_0\in X$. 
    Consider $\ell:X\to[0, +\infty)$ defined by $\ell(x_0) = 0$ and $\ell(x)=1$ for every $x\neq x_0$. Then, $X$ is a complete metric space if and only if the equation~\eqref{eqn: G0}, with data $\ell$, has unique solution, namely, $u_0(x)=d(x,x_0)$. 
\end{corollary}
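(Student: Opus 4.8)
The forward direction is essentially Corollary~\ref{cor: TZDSL}: the chosen $\ell$ is lsc, bounded (in particular locally bounded) and satisfies $\inf_X\ell = 0$ since $\ell(x_0)=0$, so if $X$ is complete there is a unique continuous solution of~\eqref{eqn: G0}; one then only has to check that $u_0(x)=d(x,x_0)$ is that solution. This last check is a direct computation: $u_0$ is $1$-Lipschitz, $\inf_X u_0 = u_0(x_0)=0$, and for $x\neq x_0$ one has $G[u_0](x)\geq (u_0(x)-u_0(x_0))/d(x,x_0)=1$, while the reverse triangle inequality $|d(x,x_0)-d(y,x_0)|\leq d(x,y)$ gives $G[u_0](x)\leq 1$; at $x_0$, $u_0$ attains its minimum so $G[u_0](x_0)=0=\ell(x_0)$. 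Hence $G[u_0]=\ell$ pointwise and $u_0$ solves~\eqref{eqn: G0}.

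For the converse I would argue by contraposition: assume $X$ is not complete and produce a second solution distinct from $d(\cdot,x_0)$. Pick a Cauchy sequence $\{p_k\}_k$ in $X$ with no limit; then $\{p_k\}$ has no accumulation point, $\ell(p_k)=1$ eventually (we may assume $p_k\neq x_0$ for all $k$), and $\sum_k \ell(p_k)d(p_k,p_{k+1})=\sum_k d(p_k,p_{k+1})$, which we can arrange to be finite by passing to a subsequence (choose $p_{k+1}$ with $d(p_k,p_{k+1})<2^{-k}$). So $\{p_k\}$ witnesses $(H_0)$, but crucially $\lim_k \ell(p_k)=1\neq 0$ is \emph{not} what we use — rather we use this sequence to build a supersolution supported on $\{p_k\}$ as in the proof of Proposition~\ref{prop.super}$(b)$ after first noting $[\ell=0]=\{x_0\}$. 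Concretely, I would look for a solution $v$ of~\eqref{eqn: G0} with $v(x_0)=0$ (forced, since $\ell(x_0)=0$ and $\inf v=0$ with $G[v](x_0)=0$) that differs from $d(\cdot,x_0)$. The natural candidate is to run Theorem~\ref{thm: existence} starting from a supersolution that is strictly larger than $d(\cdot,x_0)$ somewhere along the missing-limit sequence, exploiting that the "boundary at infinity" created by the hole gives room for a second solution; alternatively, observe that any solution $v$ must satisfy $v\leq T^\infty_{0,\ell}(\text{supersolution})$ and compute $T^\infty$ via the series formula~\eqref{eqn.series}, where $\mathcal{K}(x)$ now also contains sequences escaping to the hole, so the infimum defining $u_0$ can be strictly smaller than $d(x,x_0)$ — giving a solution below $d(\cdot,x_0)$, hence two distinct solutions.

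The main obstacle is this converse direction: one must show non-completeness genuinely forces \emph{non-uniqueness}, not merely that $d(\cdot,x_0)$ fails to be characterized by some other property. The cleanest route is probably to exhibit explicitly the function
\[
v(x):=\inf\Big\{\, d(x,x_0),\ \inf_k\big(d(x,p_k)+\textstyle\sum_{j\geq k}d(p_j,p_{j+1})\big)\Big\},
\]
check directly that $v$ is lsc, $1$-Lipschitz, satisfies $\inf_X v=0$ (via the Cauchy sequence, since $\sum_{j\geq k}d(p_j,p_{j+1})\to 0$) and $G[v]=1$ off $x_0$ with $G[v](x_0)=0$, and finally that $v\neq d(\cdot,x_0)$ — the latter because $\{p_k\}$ has no limit, so for $k$ large the term $d(x_0,p_k)+\sum_{j\geq k}d(p_j,p_{j+1})$ is close to $\lim_k d(x_0,p_k)$, which one can ensure is strictly less than $d(x_0,p_m)$ for suitable indices by choosing the Cauchy sequence to approach its "phantom limit" monotonically from above in distance to $x_0$. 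Verifying $G[v]=1$ away from $x_0$ requires a short case analysis (the infimum of two $1$-Lipschitz functions each with global slope $1$ off $x_0$ again has global slope $\leq 1$, and $\geq 1$ follows by testing against $x_0$ together with $v(x)>0=v(x_0)$ for $x\neq x_0$), which is routine once set up correctly.
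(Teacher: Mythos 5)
Your overall strategy matches the paper's: the forward direction is Corollary~\ref{cor: TZDSL} plus the direct check that $d(\cdot,x_0)$ solves~\eqref{eqn: G0} (your computation of $G[d(\cdot,x_0)]$ is correct), and for the converse your final candidate
$v(x)=\min\bigl\{d(x,x_0),\,\inf_k\bigl(d(x,p_k)+\sum_{j\geq k}d(p_j,p_{j+1})\bigr)\bigr\}$
is, after noting that the inner infimum equals $\lim_k d(x,p_k)$, exactly the function $v(x)=\min\{d(x,x_0),\lim_n d(x,y_n)\}$ that the paper exhibits. The preliminary detours through $(H_0)$, supersolutions supported on $\{p_k\}$ and $T^\infty$ are unnecessary, but harmless.

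There is, however, a genuine gap in your verification that $v$ is a supersolution. You claim $G[v](x)\geq 1$ for $x\neq x_0$ "by testing against $x_0$", i.e.\ $G[v](x)\geq (v(x)-v(x_0))/d(x,x_0)=v(x)/d(x,x_0)$. This is $\geq 1$ only when $v(x)=d(x,x_0)$; at the points where $w(x):=\lim_k d(x,p_k)<d(x,x_0)$ — and such points must exist, otherwise $v=d(\cdot,x_0)$ and you have produced nothing new — this test gives a quotient strictly less than $1$. At those points you must instead test against the tail of the Cauchy sequence: $v(p_k)\leq w(p_k)\to 0$ and $d(x,p_k)\to w(x)>0$ (positivity because $\{p_k\}$ does not converge to $x$), so $(v(x)-v(p_k))/d(x,p_k)\to w(x)/w(x)=1$, whence $G[v](x)\geq 1$. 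Similarly, your argument that $v\neq d(\cdot,x_0)$ is overcomplicated and slightly garbled (no monotone re-choice of the sequence is needed): simply evaluate at $x=p_m$ for large $m$, where $v(p_m)\leq\sup_{k\geq m}d(p_m,p_k)\to 0$ while $d(p_m,x_0)\to\lim_k d(p_k,x_0)>0$, the limit being positive because a non-convergent Cauchy sequence cannot converge to $x_0$.
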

\begin{proof}
    Indeed, if $X$ is complete then the result follows from Corollary~\ref{cor: TZDSL}. 
    On the other hand, assume that $X$ is not complete. 
    Let $\{ y_n \}_n \subset X$ be a Cauchy sequence, which is not convergent. Consider the following two functions
        \begin{align*}
            u(x) = d(x, x_0), \quad \text{ and } \quad v(x) = \min \{ d(x, x_0), \lim_{n \to \infty} d(x, y_n) \}.            
        \end{align*}
    It readily follows that $u$ and $v$ are solution of~\eqref{eqn: G0} associated with $\ell$ and that $u \neq v$. 
\end{proof}
%\begin{remark}\normalfont $ $\\
%    $(i)$ It is not difficult to see that $\widetilde{u}:\widetilde{X}\to\R$ is given by $u(x)= \min \{d(x,x_0),d(x,\widetilde{x})\}$, for all $x\in X$.\\
%    $(ii)$ If $X$ is not complete, we can find a continuum of different solutions for the equation \eqref{eqn: G0}. Indeed, $\widetilde{\ell}(\widetilde{x})$ can take any value in $[0,1)$ and all of these choices provide a different solution.
%\end{remark}

In what follows we tackle the case $\lambda > 0$. We show the uniqueness of solution of \eqref{eqn: Gs} when $X$ is a complete metric space and $\ell$ is bounded.
Let us first illustrate our ideas in the compact setting. 

\begin{proposition}\label{prop.CP.compact}{\normalfont(Comparison principle: compact case)}
Suppose that $X$ is compact, $\lambda > 0$ and that $\ell \in \LSC(X)$ is bounded and that $\inf_X \ell = 0$. Then, the following assertions hold true:
    \begin{itemize}
        \item[$(i)$] if $u, v \in \LSC(X)$ are respectively bounded sub and supersolution of~\eqref{eqn: Gs}, then $u \leq v$;
        \item[$(ii)$] equation~\eqref{eqn: Gs} admits a unique (Lipschitz) solution.
    \end{itemize}
\end{proposition}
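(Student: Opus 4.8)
The plan is to prove (i) first, since (ii) follows from it: if $u_1,u_2$ are both solutions, they are simultaneously sub- and supersolutions, so (i) applied twice gives $u_1\le u_2$ and $u_2\le u_1$; the Lipschitz regularity comes from Proposition~\ref{prop: 21} once we know a bounded solution exists (it does by Theorem~\ref{thm: existence} applied to the supersolution $\ell/\lambda$, which is bounded and Lipschitz when $X$ is compact, $\ell\in\LSC(X)$ bounded — here one uses that a bounded lsc function on a compact space is bounded, and $T^\infty(\ell/\lambda)\le \ell/\lambda$ gives an upper bound while $\ge 0$ gives a lower bound; the subsolution inequality then forces local Lipschitzness, hence Lipschitzness on the compact $X$).

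So the heart is (i). Let $u$ be a bounded lsc subsolution and $v$ a bounded lsc supersolution, both with $\inf_X u=\inf_X v=0$. Reasoning by contradiction, suppose $M:=\sup_X(u-v)>0$. Since $X$ is compact and $u-v$ is... not necessarily usc; but $u$ is lsc and $v$ is lsc, so $u-v$ need not attain its sup directly. The plan is instead to run the Ekeland/transfinite argument on $v$: the idea from \cite{DLS_2023,DMS_2022} is to build, starting from a near-maximizer $x_0$ of $u-v$, a descent sequence for $v$ using the supersolution inequality $\lambda v(x)+G[v](x)\ge \ell(x)$, i.e. for each $x$ with $v(x)>0$ there is $y\ne x$ with $v(x)-v(y)$ close to $\ell(x)d(x,y)-\lambda v(x)d(x,y)$... more precisely, since $G[v](x)\ge \ell(x)-\lambda v(x)$, there exists $y$ with $(v(x)-v(y))/d(x,y)$ close to $G[v](x)\ge \ell(x)-\lambda v(x)$. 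Meanwhile the subsolution inequality for $u$ reads $u(x)\le (u(y)+\ell(x)d(x,y))/(1+\lambda d(x,y))$ for all $y$, equivalently $u(x)-u(y)\le (\ell(x)-\lambda u(x))d(x,y)+o$. Chaining these along the descent sequence and using $u(x_0)-v(x_0)$ near $M>0$ should produce a strictly decreasing sequence $v(x_n)$ bounded below, whose limit point $x_\infty$ (extracted by compactness) satisfies $G[v](x_\infty)<\ell(x_\infty)$ or drives $u-v$ above $M$ — a contradiction. The transfinite induction enters because a single $\omega$-sequence may not terminate; one iterates the construction over ordinals, using lower semicontinuity of $v$ at limit ordinals to pass to the limit, and a cardinality/monotonicity argument to guarantee termination.

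The main obstacle I expect is making the descent-step bookkeeping quantitative enough that the accumulated gain in $u-v$ along the chain is genuinely bounded below by a positive quantity independent of how far one has gone — this is where the discount $\lambda>0$ is essential (it provides a uniform contraction-type estimate, analogous to the factors $A_j^n$ and identity~\eqref{eq: tri equality} in the proof of Theorem~\ref{thm: existence}), and where boundedness of $\ell$ is used to control $\ell(x_n)d(x_n,x_{n+1})$ uniformly. Concretely, I would aim to show that along the descent sequence $(u-v)(x_{n+1})\ge (u-v)(x_n)+c\,[\text{something positive}]$ whenever $(u-v)(x_n)$ is still close to $M$ and $v(x_n)>0$, forcing either $v(x_n)\to 0$ (whence, combined with $u$ bounded and $\inf u=0$, a contradiction with $(u-v)(x_n)$ staying near $M$) or the chain to reach a point where the supersolution inequality fails. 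Compactness is the clean way to extract the limiting point and invoke lower semicontinuity; the transfinite induction handles the (a priori possible) failure of countable termination exactly as in the cited references.
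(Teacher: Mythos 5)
Your reduction of $(ii)$ to $(i)$ matches the paper's: apply the comparison twice, and note that any subsolution satisfies $u\le \ell/\lambda$ and $G[u]\le\ell$ with $\ell$ bounded, hence is bounded and Lipschitz. For $(i)$, however, you have taken a genuinely different --- and much heavier --- route than the paper, and as written it contains a gap that you yourself flag.

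The paper's proof of the compact case is short and uses neither Ekeland's principle nor descent chains nor transfinite induction. The two observations you are missing are: $(a)$ a bounded subsolution $u$ automatically satisfies $G[u]\le\ell$ with $\ell$ bounded, hence $u$ is Lipschitz (Proposition~\ref{prop: 21}); therefore $u-v$ is upper semicontinuous and \emph{does} attain its supremum on the compact $X$ --- your worry that ``$u-v$ need not attain its sup'' is unfounded, and it is precisely what pushes you toward the transfinite machinery; and $(b)$ the rescaling trick $\bar v:=(1+\epsilon)v$, which turns the supersolution inequality into a \emph{strict} one, $\lambda\bar v+G[\bar v]>\ell$, off the set $[\ell=0]$. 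With these, one takes a maximizer $\bar x$ of $u-\bar v$ (necessarily outside $[\ell=0]$, since $u=0$ there and $\bar v\ge 0$), picks $\bar y\ne\bar x$ nearly realizing $G[\bar v](\bar x)$, subtracts the sub- and supersolution inequalities, and in a single step reads off either $\bar M<0$ or $(u-\bar v)(\bar y)>(u-\bar v)(\bar x)$ --- both contradictions. Your alternative plan is essentially the proof of the general complete-space comparison principle (\Cref{thm.CP}), which does subsume $(i)$ since compact spaces are complete; but the step you describe as ``the main obstacle'' --- the quantitative descent estimate guaranteeing that $u-v$ strictly increases by a controlled amount at each step (in the paper: choosing $\epsilon_\xi<\min\{\ell(x_\xi),\lambda m_0/2\}$ so as to obtain $\tfrac12\lambda m_\xi\, d(x_\xi,x_{\xi+1})<m_{\xi+1}-m_\xi$) --- is precisely the heart of that argument and is not supplied in your proposal. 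Without it, neither the Cauchy property of the chain at limit ordinals nor the termination of the induction can be established, so the plan as stated does not yet constitute a proof.
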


\begin{proof} 
$(i)$ Set $M = \textstyle\sup_{x \in X} (u - v)(x)$. We aim to prove $M \leq 0$. 
Arguing by contradiction, assume that $M > 0$. 
Let us rescale the function $v$ and consider $\bar v(x) := (1 + \epsilon) v(x)$, where $\epsilon > 0$ is fixed. Also, consider 
    \begin{align*}
        \bar M = \bar M(\varepsilon):= \sup_{x \in X} ~ (u - \bar v)(x).
    \end{align*}
    
Up to shrinking $\varepsilon>0$, we can and shall assume that $\bar M \geq M/2 > 0$. 
Noting that $X$ is compact, $u$ is Lipschitz and $-v$ is upper semicontinuous, we can find $\bar x \in X$ such that $\bar M = (u - \bar v)(\bar x)$. 
Since $M>0$ and that $u = 0$ on $[\ell=0]$, we immediately obtain that $\bar x \notin [\ell=0]$.
By definition of $G[\bar v](\bar x)$ and that $\lambda \bar v(x) + G[\bar v](x)> \ell(x)$, there exists $\bar y \neq \bar x$ such that
    \begin{equation}\label{eqn.barx-bary}
        \lambda \bar M + \dfrac{(u(\bar x) - u(\bar y))_+ - (\bar v(\bar x) - \bar v(\bar y))_+}{d(\bar x, \bar y)} < 0.
    \end{equation} 
If $\bar v(\bar x) < \bar v(\bar y)$, we obtain that $\bar M < 0$, which is a contradiction.
If $\bar v(\bar x) \geq \bar v(\bar y)$, after direct computations from~\eqref{eqn.barx-bary} we obtain that $(u-\bar v)(\bar x) < (u - \bar v)(\bar y)$, which is impossible due to the definition of $\bar x$. 
Since $\bar{M}$ cannot be positive, we have that $M\leq 0$. 

$(ii)$ Note that, since $\ell$ is bounded, any subsolution (and therefore any solution) is bounded and Lipschitz.
Indeed, if $u$ is a subsolution of~\eqref{eqn: Gs}, then $u\leq \ell/\lambda$ and $G[u]\leq \ell$. 
So, after applying the comparison principle twice, we immediately obtain the uniqueness result.
\end{proof}
The next theorem shows that we can get exchange compactness by completeness in Proposition~\ref{prop.CP.compact}.
\begin{theorem}\label{thm.CP}{\normalfont(Comparison Principle: general case)} Suppose that $X$ is a complete metric space, $\lambda > 0$, $\ell \in \LSC(X)$ is bounded and $\inf_X \ell = 0$. 
If $u, v \in \LSC(X)$ are respectively bounded sub-- and supersolution of~\eqref{eqn: Gs}, then one has $u \leq v$ in $X$.    
\end{theorem}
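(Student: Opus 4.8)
The strategy is to promote the compact-case argument of Proposition~\ref{prop.CP.compact} to the complete setting by replacing the use of compactness (which furnished a maximizer $\bar x$ of $u-\bar v$) with a transfinite/Ekeland-type construction, following the scheme of~\cite{DLS_2023,DMS_2022}. As before, set $M := \sup_X (u-v)$ and argue by contradiction assuming $M>0$; rescale $\bar v := (1+\epsilon)v$ with $\epsilon>0$ small enough that $\bar M := \sup_X(u-\bar v) \geq M/2 > 0$, noting that $\bar v$ is still a (strict) supersolution in the sense that $\lambda \bar v + G[\bar v] > \ell$ wherever $v>0$, and that $\bar x \notin [\ell=0]$ for any near-maximizer because $u=0=\bar v$ there while $\bar M>0$. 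The point of the strict inequality is that at a near-maximizer $x$ of $u-\bar v$ one can find $y \neq x$ with
\[
\lambda \bar M + \frac{(u(x)-u(y))_+ - (\bar v(x)-\bar v(y))_+}{d(x,y)} < -\eta
\]
for some fixed $\eta>0$, and then — as in the compact case — the case $\bar v(x) \geq \bar v(y)$ forces $(u-\bar v)(y) > (u-\bar v)(x) + \eta\, d(x,y)$, i.e. $y$ is a strictly better point and moreover is quantitatively far from $x$ when $(u-\bar v)(x)$ is close to $\bar M$ (since $u-\bar v$ is bounded above by $\bar M$, the gain $\eta\,d(x,y)$ must be small, so $d(x,y)$ is small — wait, that bounds $d(x,y)$ from above, which is the wrong direction; the correct reading is that one iterates).

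\textbf{Transfinite iteration.} Fix a small $\delta>0$ and start from a point $x_0$ with $(u-\bar v)(x_0) > \bar M - \delta$. Using the displayed inequality repeatedly, build a sequence $\{x_n\}$ with $(u-\bar v)(x_{n+1}) > (u-\bar v)(x_n) + \eta\, d(x_n,x_{n+1})$ as long as $\bar v(x_n) \geq \bar v(x_{n+1})$ (the alternative case $\bar v(x_n)<\bar v(x_{n+1})$ is ruled out exactly as in Proposition~\ref{prop.CP.compact}, giving $\bar M<0$). Since $(u-\bar v)(x_n) \leq \bar M$ for all $n$, the telescoped inequality gives $\sum_n d(x_n,x_{n+1}) \leq \delta/\eta < +\infty$, so $\{x_n\}$ is Cauchy; by completeness it converges to some $x_\omega$, and lower semicontinuity of $\bar v$ together with Lipschitz continuity of $u$ (recall $u$ bounded subsolution $\Rightarrow$ $G[u]\leq \ell$ bounded $\Rightarrow$ $u$ Lipschitz, by Proposition~\ref{prop: 21}) yields $(u-\bar v)(x_\omega) \geq \limsup_n (u-\bar v)(x_n) \geq (u-\bar v)(x_0) > \bar M - \delta$. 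Crucially, $\bar v(x_\omega) \leq \lim_n \bar v(x_n) = \liminf_n \bar v(x_n)$, and the monotonicity $\bar v(x_{n+1}) \leq \bar v(x_n)$ forces $\bar v$ to be nonincreasing along the limit as well, so the construction can be restarted from $x_\omega$; one transfinitely indexes this over all countable ordinals, obtaining a transfinite sequence $\{x_\alpha\}$ along which $\alpha \mapsto (u-\bar v)(x_\alpha)$ is nondecreasing and $\alpha \mapsto \bar v(x_\alpha)$ is nonincreasing. At limit ordinals one passes to the Cauchy limit (summability of the successive $d(x_\alpha, x_{\alpha+1})$ along each block, controlled by the bounded range of $u-\bar v$, keeps everything Cauchy). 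Because $\bar v$ takes values in a fixed bounded interval and $\alpha \mapsto \bar v(x_\alpha)$ is monotone, the process must stabilize at some countable ordinal $\bar\alpha$: at $x_{\bar\alpha}$ no admissible successor $y$ exists, which contradicts the strict supersolution inequality (which always produces such a $y$ unless $x_{\bar\alpha} \in [\ell=0]$, already excluded). This contradiction shows $M \leq 0$, i.e. $u \leq v$.

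\textbf{Main obstacle.} The delicate point is making the transfinite induction actually terminate: a single Cauchy-sequence step only handles one countable block, and one must verify that the "budget" $\sum d(x_\alpha, x_{\alpha+1})$ stays finite across limit ordinals and that the monotone real quantities $(u-\bar v)(x_\alpha)$ and $\bar v(x_\alpha)$ — both confined to bounded intervals — cannot increase/decrease through uncountably many ordinals, so stabilization is forced. This is precisely the bookkeeping carried out in \cite{DLS_2023,DMS_2022} for abstract descent moduli, and the content of the proof is to check that the global slope $G$ and the discount term $\lambda u$ fit their axiomatic framework (metric compatibility, the strict-inequality upgrade surviving the rescaling, and closedness of the relevant sublevel structure under the limiting operation); once that is in place the termination argument is theirs. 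A secondary technical care is that the strict inequality $\lambda \bar v + G[\bar v] > \ell$ must be used with a uniform slack on the region $\{\bar v > 0\}$ near the (would-be) maximum value — which is available because $\lambda \bar M \geq \lambda M/2 > 0$ supplies exactly such a slack in the displayed key inequality.
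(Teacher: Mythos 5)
Your proposal follows essentially the same route as the paper's proof: a transfinite construction of points along which $u-v$ strictly increases with per-step increments that control the step distances (so limit stages are handled by completeness, the lower semicontinuity of $v$ and the Lipschitz continuity of $u$), terminated by the impossibility of an uncountable strictly monotone bounded real-valued generalized sequence. The only cosmetic difference is that you carry over the rescaling $\bar v=(1+\epsilon)v$ from the compact case, whereas the paper dispenses with it because the discount term already supplies the uniform slack $\lambda m_0/2$ at every step --- a simplification your own closing remark essentially identifies.
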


As a direct consequence of the above theorem, we have the following uniqueness result. Its proof follows as the proof of~\Cref{prop.CP.compact} $(ii)$.

\begin{corollary}\label{cor: uniqueness lambda}
Suppose that $X$ is a complete metric space, $\lambda > 0$ and $\ell \in \LSC(X)$ is bounded with $\textstyle\inf_X \ell = 0$. 
Then, there exists a unique solution of~\eqref{eqn: Gs}.
Moreover, the solution is bounded and Lipschitz.
\end{corollary}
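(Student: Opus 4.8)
The plan is to assemble two facts already established in the excerpt: the existence part comes for free from Corollary~\ref{thm.exists.full}$(i)$, and uniqueness (together with the regularity claims) follows from the comparison principle in Theorem~\ref{thm.CP}, exactly mimicking the proof of Proposition~\ref{prop.CP.compact}$(ii)$. Since $\lambda>0$, Corollary~\ref{thm.exists.full}$(i)$ already guarantees at least one solution $u\in\LSC(X)$ (no completeness needed for existence), so it only remains to verify uniqueness and the boundedness and Lipschitz assertions, for which completeness of $X$ enters through Theorem~\ref{thm.CP}.

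First I would record the a priori regularity of subsolutions. Let $C:=\sup_X\ell<+\infty$, which is finite since $\ell$ is bounded. If $u$ satisfies $\lambda u(x)+G[u](x)\leq\ell(x)$ for all $x$, then using $G[u]\geq 0$ we get $\lambda u\leq\ell\leq C$, so $0\leq u\leq C/\lambda$ once we recall $\inf_X u=0$; hence every subsolution is bounded. Moreover $G[u]\leq\ell\leq C$ everywhere, which means $(u(x)-u(y))_+\leq C\,d(x,y)$ for all $x,y\in X$, and therefore $|u(x)-u(y)|\leq C\,d(x,y)$, i.e.\ $u$ is $C$-Lipschitz. In particular any solution, being a subsolution, is automatically bounded and Lipschitz, which settles the final sentence of the statement.

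For uniqueness, I would let $u_1,u_2$ be two solutions of~\eqref{eqn: Gs}. By Definition~\ref{def.P-solu} each $u_i$ is lsc and is simultaneously a subsolution and a supersolution, and by the previous paragraph each is bounded. Applying Theorem~\ref{thm.CP} with the bounded subsolution $u_1$ and the bounded supersolution $u_2$ yields $u_1\leq u_2$ on $X$; interchanging the roles of $u_1$ and $u_2$ (now $u_2$ as subsolution, $u_1$ as supersolution) gives $u_2\leq u_1$. Hence $u_1=u_2$, proving uniqueness.

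I do not expect a genuine obstacle here, since all the analytic difficulty resides in Theorem~\ref{thm.CP}, which we may invoke. The only thing to be careful about is that the comparison principle requires both competitors to be bounded, and this hypothesis is precisely what the a priori estimate $0\leq u\leq(\sup_X\ell)/\lambda$ supplies for every subsolution when $\lambda>0$; without the discount term (the case $\lambda=0$) this bound degenerates, which is consistent with the fact that uniqueness for~\eqref{eqn: G0} is handled separately via the results of~\cite{IZ_2023}.
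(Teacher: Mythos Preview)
Your proposal is correct and follows essentially the same approach as the paper: you derive boundedness and the Lipschitz property of any subsolution from $u\leq\ell/\lambda$ and $G[u]\leq\ell$, then apply the comparison principle of Theorem~\ref{thm.CP} twice with the roles of the two solutions interchanged, exactly as in the proof of Proposition~\ref{prop.CP.compact}$(ii)$.
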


\begin{proof}[Proof of Theorem \ref{thm.CP}]
Arguing by contradiction, assume that there exists $x_0 \in X$ such that $m_0 := (u - v)(x_0) > 0$.
Since $u$ takes the value $0$ on the (possible empty) set $[ \ell = 0 ]$, we deduce that $x_0\notin[ \ell = 0 ]$.
In what follows, we construct by transfinite induction a generalized sequence $\{ x_\alpha \}_{\alpha}$ that satisfies the following properties: 

\medskip

Set $m_\alpha:=(u-v)(x_\alpha)$, for any $\alpha$. For any $\alpha\leq \beta$, we have
\begin{enumerate}
    \item[(P1)] $v(x_\beta) < v(x_\alpha)$;
    \item[(P2)] $m_\alpha<m_\beta$;
    \item[(P3)] $\frac{1}{2}\lambda m_0 d(x_\alpha,x_\beta)\leq m_\beta-m_\alpha$.
\end{enumerate}
Base step, $\alpha=0$: $x_0$ is already defined.

\medskip

Inductive step. \textbf{Case 1}: \textit{$\alpha$ is a  successor ordinal}. Assume that $\alpha= \xi+1$, where $\xi$ is an ordinal and the point $x_\xi$ has already been defined. If $x_\xi \in [\ell = 0]$, we stop the induction. 
Let us assume that this is not the case.
Fix $\epsilon_\xi < \min \{ \ell(x_\xi), \lambda m_0 /2 \}$. By definition of $G[v](x_\xi)$, there exists $x_{\xi+1} \neq x_\xi$ such that
\begin{equation}
    \lambda v(x_\xi) + \dfrac{(v(x_\xi) - v(x_{\xi+1}))_+}{d(x_\xi, x_{\xi+1})} > \ell(x_\xi) - \epsilon_\xi,
\end{equation}
and
\begin{equation}
    \lambda u(x_\xi) +  \dfrac{(u(x_\xi) - u(x_{\xi+1}))_+}{d(x_\xi, x_{\xi+1})} \leq \ell(x_\xi).
\end{equation}
Hence, we obtain
\begin{equation}\label{eqn.01}
    \lambda m_\xi + \dfrac{(u(x_\xi) - u(x_{\xi+1}))_+ - (v(x_\xi) - v(x_{\xi+1}))_+}{d(x_\xi, x_{\xi+1})} < \epsilon_\xi.
\end{equation}
If $v(x_{\xi+1}) \geq v(x_\xi)$, the inequality \eqref{eqn.01} leads to $\lambda m_0 < \epsilon_\xi$, which is impossible because of the choice of $\epsilon_\xi$. 
This implies that $v(x_{\xi+1}) < v(x_\xi)$. 
It follows from \eqref{eqn.01} and $\epsilon_\xi < \lambda m_0/2 \leq \lambda m_\xi/2$ that
    \begin{align*}
        \dfrac{1}{2} \lambda m_\xi d(x_\xi, x_{\xi+1}) < m_{\xi+1} - m_\xi.
    \end{align*}
From here, it directly follows that $m_{\xi+1}>m_\xi$ and therefore, $(P2)$ is satisfied. $(P3)$ follows directly from the above inequality, triangle inequality and $(P2)$.

\medskip

\textbf{Case 2:} \textit{$\alpha$ is a limit ordinal}. Since the generalized sequence $\{ m_{\kappa} \}_{\kappa < \alpha} \subset \R$ converges in $\R$ (recall that it is bounded from above and strictly increasing by construction), property (P3) implies that the sequence $\{ x_{\kappa} \}_{\kappa < \alpha}$ is Cauchy. 
By the completeness of $X$, there exists some $x_{\alpha} \in X$ such that $x_{\kappa} \rightarrow x_{\alpha}$ as $\kappa \rightarrow \alpha$. 
It remains to check that the generalized sequence $\{ x_{\kappa} \}_{\kappa \leq \alpha}$ satisfies (P1)--(P3). 
Using the fact that $v \in LSC(X)$, we deduce
    \begin{align*}
        v(x_{\xi}) > \liminf_{\substack{\kappa \rightarrow \alpha \\ \kappa < \alpha}} v(x_{\kappa}) \geq v(x_{\alpha}) \quad \text{
        for all } \xi < \alpha.
    \end{align*}
Similarly, we can prove (P2) and (P3) by using induction assumptions, $u \in Lip(X)$, $v \in LSC(X)$ and the continuity of the map $d(x, \cdot)$. 

\medskip

The above construction eventually leads to a contradiction. Indeed, if the induction stops at a point $x_\alpha \in [\ell = 0]$, then $m_\alpha=0$, which contradicts $(P2)$. 
If $X$ is countable, then for large enough cardinals $\alpha < \beta$ we will have that $x_{\alpha} = x_{\beta}$ and our construction gives $v(x_{\beta}) < v(x_{\alpha})$, which is a contradiction. On the other hand, if $X$ is uncountable, denoting by $\omega_1$ the first uncountable cardinal, we would be able to construct a generalized sequence $\{x_\alpha\}_{\alpha<\omega_1}$ such that $v(x_\alpha)<v(x_\beta)$ for all $\alpha<\beta<\omega_1$, which is also a contradiction.
So, Theorem \ref{thm.CP} is proved.
\end{proof}

\section{An equivalent definition and the maximal solution}\label{sec: viscosity}
In the classical theory of viscosity solutions, one can take benefit from the smoothness of test functions to study first order HJEs. Proceeding with a similar idea, noting that we are considering equations of the form
    \begin{align*}
        H(x, u(x), G[u](x)) = 0, \text{ for every } x \in X,
    \end{align*}
it is natural to use functions that have finite global slope (at least at the reference point) as test functions. Then, one can define a notion of "viscosity solutions" for equation~\eqref{eqn: Gs}, see Definifion~\ref{def.VIS.solu}. We show in Theorem~\ref{thm.visco.solu} that viscosity solutions coincide with the ones defined previously in Definition~\ref{def.P-solu}. Besides, we employ Perron's method to define the maximal solution of equation~\eqref{eqn: Gs}, see~\Cref{cor: Perron}.

%, which is compatible with the results obtained in~\cite{LSZ_2021} for different notions of solution of the metric Eikonal equation.

\subsection{Viscosity solutions}
Recall that $X$ is a metric space (not necessarily complete), $\lambda\geq 0$ and $\ell\in LSC(X)$, with $\inf_X\ell=0$.
\begin{definition}\label{def.VIS.solu}
A function $u: X \rightarrow [0, + \infty)$ (resp. lsc function $v :X \rightarrow [0, + \infty)$) is said to be a viscosity subsolution (resp. viscosity supersolution) of \eqref{eqn: Gs} if $\inf_X u = 0$ (resp. $\inf_X v = 0$) and for any $x \in X$ and any function $\phi: X \rightarrow \R$ (resp. $\psi:  X \rightarrow \R$) such that
    \begin{align*}
        & \phi \geq u, (\phi - u)(x) = 0 \text{ and } G[\phi](x) < + \infty\\ 
        & \text{ $($resp. $\psi \leq v$, $(\psi - v)(x) = 0$ and $G[\psi](x) < + \infty$ $)$},
    \end{align*}
one has
    \begin{equation}\label{eqn.V-ineq}
        \begin{split}
            & \lambda u(x) + G[\phi](x) \leq \ell(x)  \\
            & \text{ $($resp. $\lambda v(x) + G[\psi](x) \geq \ell(x))$}.
        \end{split}
    \end{equation}
A function $u \in \LSC(X)$ is said to be a viscosity solution $(\text{V}$--solution, for short$)$ if it is both viscosity sub-- and supersolution.
\end{definition}

In what follows, we compare the notions of solution given by \Cref{def.P-solu} and \Cref{def.VIS.solu}. 

\begin{lemma}\label{lem.eqi-sub}
A function $u: X \rightarrow [0, + \infty)$ is a V--subsolution of~\eqref{eqn: Gs} if and only if it is a subsolution.
\end{lemma}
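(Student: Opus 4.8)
The plan is to prove both implications by carefully unpacking what the viscosity definition demands and comparing it with the plain inequality $\lambda u(x) + G[u](x) \le \ell(x)$ at each point $x \in X$. The $\inf_X u = 0$ condition appears verbatim in both definitions, so it suffices to match the pointwise inequalities; fix $x \in X$ throughout.

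First I would prove that a subsolution is a V--subsolution. Assume $\lambda u(x) + G[u](x) \le \ell(x)$ for all $x$, and let $\phi \colon X \to \mathbb{R}$ satisfy $\phi \ge u$, $(\phi - u)(x) = 0$, and $G[\phi](x) < +\infty$. For any $y \ne x$ we have $\phi(x) - \phi(y) \le \phi(x) - u(y) = u(x) - u(y)$, using $\phi(x) = u(x)$ and $\phi(y) \ge u(y)$; dividing by $d(x,y)$ and taking positive parts and supremum over $y \ne x$ gives $G[\phi](x) \le G[u](x)$. Hence $\lambda u(x) + G[\phi](x) \le \lambda u(x) + G[u](x) \le \ell(x)$, which is exactly \eqref{eqn.V-ineq}. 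Note this direction does not even need the hypothesis $G[\phi](x) < +\infty$.

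For the converse, assume $u$ is a V--subsolution; I want $\lambda u(x) + G[u](x) \le \ell(x)$. If $G[u](x) = +\infty$ this would be a problem, so the key observation is that a V--subsolution must have $G[u](x) < +\infty$ at every point: indeed $u$ itself is a legitimate test function $\phi = u$ provided $G[u](x) < +\infty$, but if $G[u](x) = +\infty$ one must instead exhibit, for each finite $\alpha$, a test function $\phi \ge u$ touching $u$ at $x$ with $\alpha \le G[\phi](x) < +\infty$, which would force $\ell(x) = +\infty$, contradicting that $\ell$ is real-valued. Concretely, given $\alpha > 0$ with $G[u](x) > \alpha$, one truncates: pick $y$ with $(u(x)-u(y))_+/d(x,y) > \alpha$ and set $\phi(z) := \min\{u(z),\, u(x) - \alpha\, d(x,z)\}$ near a small ball and $\phi := $ something large outside, arranged so that $\phi \ge u$ is false in general — so instead take $\phi(z) := \max\{u(x) - \alpha d(x,z),\, \inf_X u\}$ capped below, or more cleanly $\phi(z) = u(x) - \alpha d(x,z)$ does not dominate $u$. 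The clean choice is $\phi(z) := \min\big(u(z),\, u(x)\big)$ — no. Let me instead use: there is a finite-slope function lying above $u$ and equal to it at $x$ iff $G[u](x) < +\infty$, because $\psi_{x,\beta}(z) := u(x) + \beta\, d(x,z)$ with $\beta \ge G[u](x)$ satisfies $\psi_{x,\beta} \ge u$ (this is precisely the statement $G[u](x) \le \beta$) and $G[\psi_{x,\beta}](x) = \beta < +\infty$. Once $G[u](x) < +\infty$ is established (for free when it already holds, and when $G[u](x)=+\infty$ no admissible $\phi$ can give a finite bound, contradicting the V--subsolution property since $\ell(x)<+\infty$), I simply take $\phi = u$ as the test function: it satisfies $\phi \ge u$, $(\phi-u)(x)=0$, $G[\phi](x) = G[u](x) < +\infty$, so \eqref{eqn.V-ineq} yields $\lambda u(x) + G[u](x) \le \ell(x)$, as desired.

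The main obstacle is the bookkeeping in the case $G[u](x) = +\infty$: one must verify that no finite-global-slope function can touch $u$ from above at such a point, equivalently that $\inf\{G[\phi](x) : \phi \ge u,\ \phi(x) = u(x)\} = G[u](x)$ — the linear comparison functions $z \mapsto u(x) + \beta d(x,z)$ realize this infimum exactly and are the tool of choice. Apart from that, every step is a one-line estimate; I would present the $\Leftarrow$ direction first via this comparison-function remark, then the trivial $\Rightarrow$ direction, or fold both into a single short paragraph.
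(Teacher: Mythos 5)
Your forward direction is correct and is the same one-line estimate the paper uses. The converse, however, has a genuine gap at its only nontrivial point, namely showing that a V--subsolution satisfies $G[u](x)<+\infty$ everywhere. Your ``clean choice'' of comparison function $\psi_{x,\beta}(z):=u(x)+\beta\,d(x,z)$ does not do what you claim, on either count. First, $\psi_{x,\beta}\ge u$ is \emph{not} equivalent to $G[u](x)\le\beta$: the global slope $G[u](x)=\sup_{y\neq x}(u(x)-u(y))_+/d(x,y)$ controls the \emph{descent} of $u$ away from $x$ (i.e.\ $u(y)\ge u(x)-\beta d(x,y)$), whereas $\psi_{x,\beta}\ge u$ controls the \emph{ascent} (i.e.\ $u(y)\le u(x)+\beta d(x,y)$). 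Second, and fatally, $x$ is the global minimum of $\psi_{x,\beta}$, so $G[\psi_{x,\beta}](x)=0$, not $\beta$; plugging it into~\eqref{eqn.V-ineq} only yields $\lambda u(x)\le\ell(x)$ and produces no contradiction when $G[u](x)=+\infty$. Relatedly, your closing claim that ``no finite-global-slope function can touch $u$ from above at such a point'' is false --- $\phi(z)=\max\{u(z),u(x)\}$ always touches $u$ from above at $x$ with $G[\phi](x)=0$ --- and the quantity you need to control is a supremum over test functions, not an infimum.

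The construction you were circling around but never wrote down is the downward cone truncated by $u$: for $\kappa>0$ such that some $y\neq x$ satisfies $(u(x)-u(y))_+/d(x,y)\ge\kappa$, set $\varphi_\kappa(z):=\max\{\,u(x)-\kappa\,d(x,z),\;u(z)\,\}$. This is automatically $\ge u$, equals $u(x)$ at $x$, and satisfies $G[\varphi_\kappa](x)=\kappa<+\infty$ because the cone branch is active at $y$. At a point where $G[u](x)=+\infty$, testing with $\varphi_n$ for each $n\in\N$ forces $\lambda u(x)+n\le\ell(x)$ for all $n$, contradicting $\ell(x)<+\infty$; this is exactly the paper's argument. (Once finiteness of $G[u](x)$ is known, your idea of taking $\phi=u$ itself as the test function does finish the proof, and is marginally more direct than the paper's $\varepsilon$--approximation in its Case~2.)
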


\begin{proof}
If $u$ is a subsolution of~\eqref{eqn: Gs}, then for any $x\in X$, we have that $G[u](x) \leq \ell(x)-\lambda u(x)$. 
Therefore, 
\[u(y)\geq u(x)-(\ell(x)-\lambda u(x))d(y,x),~\text{for all }y\in X.\]
So, if $\phi:X\to\R$ is a function such that $\phi(x)=u(x)$ and $\phi\geq u$, then $G[\phi](x)\leq \ell(x)-\lambda u(x)$, yielding the inequality~\eqref{eqn.V-ineq}.
Hence, $u$ is a V--subsolution.

\medskip

Conversely, assume that $u$ is a V--subsolution. We first claim that $\text{dom} ~ G[u] = X$. 
Arguing by contradiction, assume that there exists $\bar x \in X \setminus \text{dom}~G[u]$. This implies that for any $n \in \N$, there exists $y_n \neq \bar x$ such that
    \begin{equation}\label{quotient.contra}
        \dfrac{u(\bar x) - u(y_n)}{d(\bar x, y_n)} \geq n.
    \end{equation}
Set $\varphi_n(y) = \max \{ u(\bar x) - n d(\bar x, y), u(y) \}$ for every $y \in X$. We notice that $\varphi_n \geq u$ in $X$, $(u - \varphi_n)(\bar x) = 0$ and due to \eqref{quotient.contra}, we also have $G[\varphi_n](\bar x) = n$. Using definition \ref{def.VIS.solu} with the test function $\varphi_n$, we obtain $\lambda u(\bar x) + n \leq \ell(\bar x)$, which is a contradiction for any $n > \ell(\bar x)$.\\
We show that $u$ is a subsolution. 
Let us fix $x \in X$ and consider the following two cases.
\smallskip \\
\textbf{Case 1}: $G[u](x) = 0$. Then, $u(x) = 0$ (since it is a global minimizer) and hence, $\lambda u(x) + G[u](x) \leq \ell(x)$.

\medskip 

\textbf{Case 2}: $G[u](x) > 0$. By definition of $G[u](x)$, for any $\epsilon \in (0,G[u](x))$, there exists $y_{\epsilon} \neq x$ such that
    \begin{equation}\label{eqn.k-epsi}
        \dfrac{u(x) - u(y_{\epsilon})}{d(x, y_{\epsilon})} \geq \kappa_{\epsilon} := G[u](x) - \epsilon > 0.
    \end{equation}
If we set $\phi_{\epsilon}(y) = \max \{ u(x) - \kappa_{\epsilon}d(x, y), u(y) \}$ for every $y \in X$, then we get that $\phi_{\epsilon} \geq u$ in $X$, $(u - \phi_{\epsilon})(x) = 0$ and by \eqref{eqn.k-epsi}, we have $G[\phi_{\epsilon}](x) = \kappa_{\epsilon}$. Using $\phi_{\epsilon}$ as a test function for \eqref{eqn: Gs}, we infer that 
    \begin{align*}
        \lambda u(x) + G[u](x) - \epsilon \leq \ell(x).
    \end{align*}
Letting $\epsilon \searrow 0$, we obtain the desired inequality.
Lemma \ref{lem.eqi-sub} is proved.
\end{proof}

\begin{lemma}
 \label{lem.P-V-super}
 A function $v: X \to [0, + \infty)$ is a V--supersolution of~\eqref{eqn: Gs} if and only if it is a supersolution. 
\end{lemma}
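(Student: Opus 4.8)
The plan is to prove both implications directly, closely mirroring the structure of the proof of Lemma~\ref{lem.eqi-sub} but taking into account that in the supersolution inequality the global slope of the test function appears with the \emph{opposite} sense, which actually makes the ``$\Leftarrow$'' direction essentially trivial and puts the real work in the ``$\Rightarrow$'' direction.

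\medskip

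First I would handle the easy implication: if $v$ is a supersolution, then $v\in\LSC(X)$, $\inf_X v=0$, and $\lambda v(x)+G[v](x)\geq \ell(x)$ for every $x\in X$. Let $\psi:X\to\R$ be any admissible test function at $x$, i.e. $\psi\leq v$, $(\psi-v)(x)=0$ and $G[\psi](x)<+\infty$. Then for every $y\in X$ we have $\psi(x)-\psi(y)=v(x)-\psi(y)\geq v(x)-v(y)$, so $(\psi(x)-\psi(y))_+\geq (v(x)-v(y))_+$, and dividing by $d(x,y)$ and taking the supremum over $y\neq x$ gives $G[\psi](x)\geq G[v](x)$. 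Hence $\lambda v(x)+G[\psi](x)\geq \lambda v(x)+G[v](x)\geq \ell(x)$, which is exactly~\eqref{eqn.V-ineq}. So $v$ is a V--supersolution. (Note that in this direction we never needed to produce a test function; the point is that touching from below can only increase the global slope.)

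\medskip

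For the converse, suppose $v$ is a V--supersolution; in particular $v\in\LSC(X)$ and $\inf_X v=0$, so it remains to verify the pointwise inequality $\lambda v(x)+G[v](x)\geq\ell(x)$ at a fixed $x\in X$. The natural choice of test function is $\psi(y)=\max\{\,v(x)-\kappa\, d(x,y),\, 0\,\}$ wait --- but this need not lie below $v$ unless we are careful; instead one should take a test function built from a \emph{lower} support. The clean choice is to fix any $\kappa>G[v](x)$ finite (if $G[v](x)=+\infty$ there is something to say separately, see below) and set $\psi_\kappa(y):=v(x)-\kappa\, d(x,y)$. Since $\kappa>G[v](x)$, by definition of the global slope we have $v(x)-v(y)\leq G[v](x)\,d(x,y)<\kappa\, d(x,y)$ for all $y\neq x$, hence $\psi_\kappa(y)<v(y)$ for $y\neq x$ and $\psi_\kappa(x)=v(x)$, so $\psi_\kappa$ touches $v$ from below at $x$; moreover $G[\psi_\kappa](x)=\kappa<+\infty$ (the global slope of $y\mapsto -\kappa d(x,y)$ at $x$ is exactly $\kappa$). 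Applying the V--supersolution inequality with $\psi_\kappa$ gives $\lambda v(x)+\kappa\geq \ell(x)$ for every $\kappa>G[v](x)$; letting $\kappa\searrow G[v](x)$ yields $\lambda v(x)+G[v](x)\geq\ell(x)$, as desired. The only remaining point is the case $G[v](x)=+\infty$: then the inequality $\lambda v(x)+G[v](x)\geq\ell(x)$ holds trivially since $\ell(x)<+\infty$, so there is nothing to prove. (One may also observe that there is no issue analogous to the ``$\mathrm{dom}\,G[u]=X$'' step in Lemma~\ref{lem.eqi-sub}, because an infinite global slope is harmless on the supersolution side.)

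\medskip

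I do not expect any serious obstacle here; the statement is the ``dual'' of Lemma~\ref{lem.eqi-sub} and the asymmetry of the definition works in our favour. The one subtlety worth stating carefully is the direction of the inequality $G[\psi](x)\geq G[v](x)$ (resp. $\leq$) for lower (resp. upper) supports, and making sure the linear test function $\psi_\kappa(y)=v(x)-\kappa d(x,y)$ genuinely satisfies $\psi_\kappa\leq v$ with equality only at $x$ — this uses precisely that $\kappa$ strictly exceeds $G[v](x)$. Everything else is a one-line limit.
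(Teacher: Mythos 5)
Your proof is correct and rests on the same idea as the paper's: the forward direction is the observation that any lower support $\psi$ at $x$ satisfies $G[\psi](x)\geq G[v](x)$, and the converse hinges on the fact that the only obstruction to testing is $G[v](x)=+\infty$, which is harmless since $\ell$ is real-valued. The paper's converse is marginally shorter — arguing by contradiction, it notes that failure of the pointwise inequality forces $G[v](\bar x)<+\infty$ and then uses $v$ \emph{itself} as the test function, whereas you build cone test functions $\psi_\kappa$ and pass to the limit $\kappa\searrow G[v](x)$; both are valid.
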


\begin{proof}
Straightforward computations show that any supersolution is a V--supersolution. It suffices to check the converse. 
Let $v: X \rightarrow [0, + \infty)$ be a V--supersolution of \eqref{eqn: Gs}. 
Arguing by contradiction, if it is not a supersolution, there exists $\bar x \in X$ such that
    \begin{align*}
        \lambda v(\bar x) + G[v](\bar x) < \ell(\bar x). 
    \end{align*}
This implies that $G[v](\bar x)$ is finite, and therefore, $v$ itself can be used as a test function. 
Therefore, one readily gets
\begin{align*}
     \lambda v(\bar x) + G[v](\bar x) \geq \ell(\bar x),
\end{align*}
 which is a contradiction.
% One can see that $\bar x \not\in [ \ell = 0 ]$ since supersolutions are nonnegative. Hence, there exists $0 < \epsilon < \ell(\bar x) - \lambda v(\bar x)$ small enough such that
%     \begin{equation}\label{eqn.SUPER-1}
%         (1 + \lambda d(\bar x, y)) v(\bar x) - v(y) < (\ell(\bar x) - \epsilon)d(\bar x, y), \, \forall y \neq x.
%     \end{equation}
% Set $\psi(y) = v(\bar x) - (\ell(\bar x) - \epsilon - \lambda v(\bar x))d(\bar x, y)$ for all $y \in X$. It follows from the inequality \eqref{eqn.SUPER-1} that
%     \begin{align*}
%         v \leq \psi \text{ in } X \text{ and } (v - \psi)(\bar x) = 0.
%     \end{align*}
% Using $\psi$ as a test function for equation \eqref{eqn: Gs} (see definition \eqref{def.VIS.solu}), we directly obtain $\ell(\bar x) - \epsilon \geq \ell(\bar x)$, which makes a contradiction. We finish the proof.
 \end{proof}

 As a direct consequence of Lemma~\ref{lem.eqi-sub} and Lemma~\ref{lem.P-V-super}, we obtain the equivalence between V--solutions and solutions of equation~\eqref{eqn: Gs} in metric spaces.

 \begin{theorem}\label{thm.visco.solu} Let $X$ be a metric space and let $\ell \in \LSC(X)$ satisfying $\inf_X \ell = 0$. A function $u: X \to \R$ is a solution of~\eqref{eqn: Gs} if and only if it is a V--solution.
\end{theorem}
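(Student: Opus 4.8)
The plan is to derive Theorem~\ref{thm.visco.solu} as an immediate corollary of Lemma~\ref{lem.eqi-sub} and Lemma~\ref{lem.P-V-super}, so the proof amounts to assembling these two equivalences on the sub- and super-solution level. First I would recall that, by Definition~\ref{def.P-solu}, a function $u:X\to\R$ (automatically real-valued here, and lsc by Proposition~\ref{prop: 21} once $G[u]\leq\ell$ holds, hence $u\in\LSC(X)$ once $u$ is a subsolution) is a solution of~\eqref{eqn: Gs} precisely when it is simultaneously a subsolution and a supersolution; likewise, by Definition~\ref{def.VIS.solu}, $u\in\LSC(X)$ is a V--solution precisely when it is both a V--subsolution and a V--supersolution. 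So the statement "$u$ is a solution $\iff$ $u$ is a V--solution" splits into the conjunction of two biconditionals, one for each side.

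Next I would invoke Lemma~\ref{lem.eqi-sub}, which gives that $u:X\to[0,+\infty)$ is a V--subsolution if and only if it is a subsolution, and Lemma~\ref{lem.P-V-super}, which gives that $v:X\to[0,+\infty)$ is a V--supersolution if and only if it is a supersolution. Conjoining these, a function $u$ is both a subsolution and a supersolution if and only if it is both a V--subsolution and a V--supersolution; by the two definitions just recalled, the left-hand side says $u$ is a solution and the right-hand side says $u$ is a V--solution, which is exactly the claimed equivalence. The only bookkeeping point worth a sentence is that in both definitions the ambient regularity/normalization conditions match up: $\inf_X u=0$ appears in the notion of subsolution and of V--subsolution, lower semicontinuity is part of "supersolution", "V--supersolution", and "solution" alike, and the nonnegativity range $[0,+\infty)$ used in Definition~\ref{def.VIS.solu} is consistent with $\inf_X u=0$ together with $u$ real-valued, so no additional hypothesis creeps in.

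There is essentially no obstacle here; the real content has already been discharged in Lemma~\ref{lem.eqi-sub} and Lemma~\ref{lem.P-V-super}, where the nontrivial direction (a V--subsolution is a subsolution) required building the explicit test functions $\varphi_n(y)=\max\{u(\bar x)-n\,d(\bar x,y),u(y)\}$ to force $\mathrm{dom}\,G[u]=X$ and $\phi_\epsilon(y)=\max\{u(x)-\kappa_\epsilon d(x,y),u(y)\}$ to recover the pointwise inequality. The proof of the theorem itself is just: "By Lemma~\ref{lem.eqi-sub}, $u$ is a subsolution iff it is a V--subsolution; by Lemma~\ref{lem.P-V-super}, $u$ is a supersolution iff it is a V--supersolution; since (by Definitions~\ref{def.P-solu} and~\ref{def.VIS.solu}) being a solution, resp. a V--solution, means being both sub- and super-, resp. both V--sub- and V--super-, the two notions coincide." If anything, the only care needed is to state it cleanly enough that a reader does not expect a longer argument.
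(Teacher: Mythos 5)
Your proposal matches the paper exactly: the theorem is stated there as "a direct consequence of Lemma~\ref{lem.eqi-sub} and Lemma~\ref{lem.P-V-super}," which is precisely the assembly you carry out, with the same attention to the fact that the regularity and normalization conditions in Definitions~\ref{def.P-solu} and~\ref{def.VIS.solu} align. No issues.
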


\subsection{Perron's method}

In the classical theory of viscosity solutions, Perron's method plays a central role in finding a solution for several PDEs. 
Roughly speaking, Perron's method says that the supremum of all subsolutions (of a given PDE), bounded from above by a supersolution, is a natural candidate for being a solution. In what follows, we show how this idea is hidden in the operator $T^\infty$. %From now on, a solution will be called Perron's solution if it is the maximal one. 

\begin{theorem}\label{thm: T characterization}
    Let $u:X\to\R$ be a function such that $\inf_X u = 0$, and $\lambda\geq 0$. 
    Let $T$ be the operator defined by~\eqref{oper-T} and $T^\infty$ defined by~\eqref{eq: Tinfty}.
    Then
    \[T^\infty u(x) = \sup\left\{v(x):~ v~\text{is a subsolution of }\eqref{eqn: Gs}~\text{and } v\leq u \right\}.\]
    In particular, if $u$ is a supersolution of \eqref{eqn: Gs}, then the solution generated by $T^{\infty}$ satisfies
    \[T^\infty u(x)=\sup \left\{v(x):~ v~\text{is a solution of }\eqref{eqn: Gs}~\text{and } v\leq u \right\}.\]
\end{theorem}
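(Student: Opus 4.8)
The plan is to establish the identity \(T^\infty u(x) = \sup\{v(x) : v \text{ subsolution of }\eqref{eqn: Gs},\ v \leq u\}\) by a two-sided inequality, and then to deduce the ``in particular'' statement from \Cref{thm: existence}. Denote the right-hand side supremum by \(w(x)\). First I would prove \(T^\infty u \leq w\): by \Cref{prop: Tv=v} the function \(T^\infty u\) satisfies \(T(T^\infty u) = T^\infty u\), and since \(u \geq 0\) implies \(T^n u \geq 0\) for all \(n\), we have \(\inf_X T^\infty u = 0\) (using \(\inf_X u = 0\) and \(u \geq T u \geq \cdots\), together with the elementary fact noted after~\eqref{oper-T} that \(T\) preserves the property \(\inf_X(\cdot) = 0\)). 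Hence by \Cref{prop: T subsolution}, \(T^\infty u\) is itself a subsolution of \eqref{eqn: Gs}, and clearly \(T^\infty u \leq T^0 u = u\). So \(T^\infty u\) is one of the competitors in the supremum defining \(w\), giving \(T^\infty u(x) \leq w(x)\) for every \(x\).

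Next I would prove the reverse inequality \(w \leq T^\infty u\), i.e. that every subsolution \(v \leq u\) satisfies \(v \leq T^\infty u\) pointwise. The key observation is that \(T\) is monotone: if \(f \leq g\) then \(Tf \leq Tg\), directly from the formula~\eqref{oper-T}. Fix such a \(v\). By \Cref{prop: T subsolution}, \(v\) being a subsolution gives \(Tv = v\). Since \(v \leq u\), monotonicity yields \(v = Tv \leq Tu\), and iterating, \(v = T^n v \leq T^n u\) for all \(n \in \N\). Passing to the limit \(n \to \infty\), we obtain \(v(x) \leq T^\infty u(x)\) for every \(x \in X\). Taking the supremum over all such \(v\) gives \(w \leq T^\infty u\), which combined with the previous paragraph proves the first identity.

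For the ``in particular'' part, assume \(u\) is a supersolution. By \Cref{thm: existence}, \(T^\infty u\) is a solution of \eqref{eqn: Gs}; in particular it is a subsolution with \(T^\infty u \leq u\), so it is admissible in the supremum over \emph{solutions} \(v \leq u\) as well. Since every solution is a subsolution, the supremum over solutions \(v \leq u\) is bounded above by the supremum over subsolutions \(v \leq u\), which equals \(T^\infty u(x)\) by the first part; and it is bounded below by \(T^\infty u(x)\) because \(T^\infty u\) is itself such a solution. Hence the two suprema coincide with \(T^\infty u(x)\).

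I do not expect a serious obstacle here: the argument is entirely soft, resting on the monotonicity of \(T\) and the fixed-point characterization of subsolutions already recorded in \Cref{prop: T subsolution} and \Cref{prop: Tv=v}. The one point requiring a little care is the verification that \(T^\infty u\) genuinely satisfies \(\inf_X T^\infty u = 0\) (needed to call it a subsolution in the strict sense of \Cref{def.P-solu}); this follows because \(0 \leq T^\infty u \leq u\) and \(\inf_X u = 0\), so the infimum is squeezed to \(0\). Everything else is a routine passage to the limit in the monotone, decreasing sequence \(\{T^n u\}_n\).
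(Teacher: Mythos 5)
Your argument is correct and follows essentially the same route as the paper: both directions rest on Proposition~\ref{prop: Tv=v} and Proposition~\ref{prop: T subsolution} to identify $T^\infty u$ as a subsolution below $u$, and on the monotonicity of $T$ together with the fixed-point identity $Tv=v$ to dominate every competing subsolution, with the ``in particular'' part deduced from Theorem~\ref{thm: existence} exactly as in the paper.
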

\begin{proof}
    Let us denote by $U: X\to\R$ the function defined by 
    \[U(x):=\sup \left\{v(x):~ v~\text{is a subsolution of }\eqref{eqn: Gs}~\text{and } v\leq u \right\}.\]
    Thanks to Proposition~\ref{prop: Tv=v}, $T^\infty u$ is a subsolution of~\eqref{eqn: Gs} and therefore $ {T^\infty u\leq U\leq u}$. 
    Let $v:X\to\R$ be any subsolution of $\eqref{eqn: Gs}$ such that $u\geq v$. 
    Then, for any $x\in X$
    \begin{align*}
    Tu(x)&=\inf\left\{ \dfrac{u(y)+\ell(x)d(x,y)}{1+\lambda d(x,y)}:~y\in X\right\}\\
    &\geq \inf\left\{ \dfrac{v(y)+\ell(x)d(x,y)}{1+\lambda d(x,y)}:~y\in X\right\}\\
    &=Tv(x)=v(x).
    \end{align*}
    Therefore $Tu\geq v$. 
    Inductively, we readily get that for any $n\in\N$, $T^nu\geq v$. 
    Finally, Proposition~\ref{prop: Tv=v} implies that $T^\infty u\geq v$. 
    This completes the proof of the first part of the proposition.
    \\
    The second part of the proposition follows similarly as the first part but noticing that Theorem~\ref{thm: existence} gives us that $T^\infty u$ is a solution of~\eqref{eqn: Gs}.
\end{proof}
\begin{definition}
    A function $u:X\to\R$ is called the Perron solution of~\eqref{eqn: Gs} if $u$ is a solution of~\eqref{eqn: Gs} and is pointwise maximal among the set of solutions, i.e.
    \[u_{\lambda}(x):= \sup\{v(x):~v~\text{is a solution of }~\eqref{eqn: Gs}\}.\]
\end{definition}
The following corollary establishes the existence of the Perron solution. Note that, by definition, it is unique.
In the sequel, we denote by $u_{\lambda}$ and $u_0$ the Perron solutions of~\eqref{eqn: Gs} for $\lambda > 0$ and $\lambda = 0$ respectively.
\begin{corollary}\label{cor: Perron}
Let $\lambda>0$. Then $u_\lambda:=T^\infty \frac{\ell}{\lambda}$ is the Perron solution of~\eqref{eqn: Gs}.
For the case $\lambda =0$, let us assume that~\eqref{eq: hyp} holds.
Then, the function $u_0:X\to\R$  defined by
        \[ u_0(x):=\inf\left\{\sum_{n=0}^\infty \ell(x_n)d(x_n,x_{n+1}):~\{x_n\}_n \subset X,~x_0=x,~\lim_{n \to \infty} \ell(x_n)=0\right\}, \text{ for every } x \in X
        \]
is the Perron solution of equation~\eqref{eqn: G0}. 
\end{corollary}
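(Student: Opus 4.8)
The plan is to prove Corollary \ref{cor: Perron} by assembling the machinery already developed. For the case $\lambda > 0$, note that $\ell/\lambda$ is a supersolution of~\eqref{eqn: Gs} by Proposition~\ref{prop.super}$(a)$, so Theorem~\ref{thm: existence} gives that $T^\infty \frac{\ell}{\lambda}$ is a solution. To see it is the Perron (maximal) solution, I would invoke Theorem~\ref{thm: T characterization}: since every solution $v$ of~\eqref{eqn: Gs} is in particular a subsolution, and every subsolution $v$ satisfies $\lambda v \leq \ell$ (because $G[v] \geq 0$), hence $v \leq \ell/\lambda$, the supremum over all solutions is bounded above by $T^\infty \frac{\ell}{\lambda}$; conversely $T^\infty \frac{\ell}{\lambda}$ is itself a solution, so equality holds.

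For the case $\lambda = 0$ under assumption~\eqref{eq: hyp}, I would first recall from Proposition~\ref{prop.solu_series} (and the discussion preceding Corollary~\ref{thm.exists.full}) that the function $u_0$ given by the series formula is a solution of~\eqref{eqn: G0}, and moreover that $u_0 = T u_0$, i.e.\ $u_0$ is a fixed point of $T = T_{0,\ell}$. The key observation is that $u_0 = T^\infty u_0$, so by Theorem~\ref{thm: T characterization} applied with $u = u_0$ we get
\[
u_0(x) = \sup\{ v(x): v \text{ is a subsolution of }\eqref{eqn: G0} \text{ and } v \leq u_0 \}.
\]
Thus it remains only to show that every subsolution $v$ of~\eqref{eqn: G0} satisfies $v \leq u_0$ pointwise; combined with the fact that $u_0$ is itself a solution, this yields that $u_0$ is the pointwise maximum over all solutions (indeed over all subsolutions).

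To prove $v \leq u_0$ for an arbitrary subsolution $v$ of~\eqref{eqn: G0}: fix $x \in X$ and an arbitrary sequence $\{x_n\}_n \in \mathcal{K}(x)$, so $x_0 = x$ and $\ell(x_n) \to 0$. Since $G[v](x_n) \leq \ell(x_n)$ for all $n$, we have $v(x_n) - v(x_{n+1}) \leq G[v](x_n)\, d(x_n, x_{n+1}) \leq \ell(x_n)\, d(x_n, x_{n+1})$; summing a telescoping inequality gives
\[
v(x_0) - v(x_{N+1}) \leq \sum_{n=0}^{N} \ell(x_n)\, d(x_n, x_{n+1}), \qquad \text{for all } N \in \N.
\]
Since $\lim_{n\to\infty} G[v](x_n) = 0$ and $\inf_X v = 0$, either the series $\sum_n \ell(x_n) d(x_n, x_{n+1})$ diverges (in which case the inequality $v(x) \leq \sum_{n} \ell(x_n) d(x_n, x_{n+1})$ is trivial), or it converges, in which case, since $\sum_n G[v](x_n) d(x_n, x_{n+1}) \leq \sum_n \ell(x_n) d(x_n, x_{n+1}) < +\infty$, Proposition~\ref{prop.TZ} yields $\liminf_{n} v(x_n) = \inf_X v = 0$; passing to the limit along a suitable subsequence in the telescoping inequality gives $v(x) \leq \sum_{n=0}^{\infty} \ell(x_n) d(x_n, x_{n+1})$. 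Taking the infimum over all $\{x_n\}_n \in \mathcal{K}(x)$ gives $v(x) \leq u_0(x)$. The main subtlety — and the only place real care is needed — is this last step: one must not assume the series converges, and one must use Proposition~\ref{prop.TZ} (applied to the subsolution $v$, whose global slope is dominated by $\ell$) precisely to control the tail term $v(x_{N+1})$, rather than merely the boundary condition $\inf_X v = 0$ which by itself says nothing about the values $v(x_n)$ along the chosen sequence.
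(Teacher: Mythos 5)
Your argument for $\lambda>0$ is exactly the paper's: every solution $v$ satisfies $v\le \ell/\lambda$ because $G[v]\ge 0$, and Theorem~\ref{thm: T characterization} then identifies $T^\infty\frac{\ell}{\lambda}$ as the maximal one. For $\lambda=0$ your overall strategy (show every subsolution $v$ satisfies $v\le u_0$, using a telescoping estimate along a sequence in $\mathcal{K}(x)$ and Proposition~\ref{prop.TZ} to control the tail value $v(x_{N+1})$) is also the paper's strategy, and you correctly identify that the boundary condition $\inf_X v=0$ alone does not control $v$ along the chosen sequence. However, there is a genuine gap precisely at the step you flag as delicate: Proposition~\ref{prop.TZ} requires
\[
\sum_{n=0}^{\infty} G[v](x_{n+1})\,d(x_n,x_{n+1})<+\infty,
\]
with the slope evaluated at the \emph{second} endpoint of each step, whereas you verify only $\sum_n G[v](x_n)\,d(x_n,x_{n+1})\le\sum_n \ell(x_n)\,d(x_n,x_{n+1})<+\infty$. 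For an arbitrary sequence in $\mathcal{K}(x)$ these are not interchangeable: convergence of $\sum_n \ell(x_n)d(x_n,x_{n+1})$ does not imply convergence of $\sum_n \ell(x_{n+1})d(x_n,x_{n+1})$, since $d(x_n,x_{n+1})$ can be enormous exactly where $\ell(x_n)$ is tiny while $\ell(x_{n+1})$ is comparatively large (one can arrange $\ell(x_n)\to 0$ with $\ell(x_{n+1})/\ell(x_n)$ unbounded along a subsequence). So as written, the hypotheses of Proposition~\ref{prop.TZ} are not verified.

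The fix is exactly the second half of Proposition~\ref{prop.solu_series}, which you cite but do not use: the infimum defining $u_0(x)$ is unchanged if $\mathcal{K}(x)$ is replaced by $\widehat{\mathcal{K}}(x)$, the class of sequences along which $\{\ell(x_n)\}_n$ is decreasing. Working with a near-optimal sequence in $\widehat{\mathcal{K}}(x)$ (as the paper does), one gets $\ell(x_{n+1})\le \ell(x_n)$, hence $\sum_n G[v](x_{n+1})d(x_n,x_{n+1})\le\sum_n \ell(x_{n+1})d(x_n,x_{n+1})\le\sum_n \ell(x_n)d(x_n,x_{n+1})<+\infty$, and Proposition~\ref{prop.TZ} applies. (Note that proving $v(x)\le \sum_n \ell(x_n)d(x_n,x_{n+1})$ for \emph{every} sequence in $\mathcal{K}(x)$, as you attempt, is more than is needed: it suffices to establish the bound for one $\varepsilon$-optimal sequence, which one is free to take in $\widehat{\mathcal{K}}(x)$.) With this modification your telescoping computation, which is a mild repackaging of the paper's use of $T^k v=v$, goes through.
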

\begin{proof}
        Let us start with the case $\lambda >0$. Let $v$ be any solution of~\eqref{eqn: Gs}. Since $G[v]\geq 0$, it follows that $v \leq \ell/\lambda$. 
        Thanks to Theorem~\ref{thm: T characterization} we deduce that $T^\infty \frac{\ell}{\lambda}\geq v$.

        \medskip
        
        Now we consider the case $\lambda=0$. First, observe that $u_0$ is a solution of~\eqref{eqn: G0} thanks to Proposition~\ref{prop.solu_series}. 
        It suffices to check that $u_0$ is the largest one. 
        Let $v:X\to\R$ be any subsolution of~\eqref{eqn: G0} and let $x\in X$. 
        If $\ell(x)=0$, then $u_0(x)=v(x)=0$ and there is nothing to do.
        Assume now that $\ell(x)\neq 0 $. 
        Fix $\varepsilon>0$ and consider a sequence $\{x_n\}_n \subset X$ such that $x_0=x$, $\lim_{n\to\infty} \ell(x_n)=0$ and 
        \[u_0(x)+\varepsilon\geq \sum_{n=0}^\infty \ell(x_n)d(x_n,x_{n+1}).\]
        Thanks to Proposition~\ref{prop.solu_series}, we can further assume that the sequence $\{\ell(x_n)\}_n$ is decreasing. Therefore, we get
        \[\sum_{n=0}^\infty \ell(x_{n+1})d(x_n,x_{n+1})\leq \sum_{n=0}^\infty \ell(x_n)d(x_n,x_{n + 1})< + \infty.\]
        Since $v$ is a subsolution, we have that $G[v]\leq \ell$ and then
        \[\sum_{n=0}^\infty G[v](x_{n+1})d(x_n,x_{n + 1})< + \infty.\]
        
        We can apply now~\Cref{prop.TZ} and deduce that 
        \[\liminf_{n\to\infty} v(x_n)=\inf_{x\in X} v(x)=0.\]
        Let $k\in \N$ be such that $v(x_k)<\varepsilon$.
        Direct computations lead to
        \begin{align*}
            u_0(x)+\varepsilon & \geq \sum_{n=0}^{k-1}\ell(x_n)d(x_n,x_{n+1}) \\
            & = -v(x_k)+v(x_k)+\sum_{n=0}^{k-1}\ell(x_n)d(x_n,x_{n+1}) \\
            & \geq-v(x_k) +T^kv(x)\\
            &\geq -\varepsilon+v(x),
        \end{align*}
        where the third inequality follows from the identity $Tv = v$ (see Proposition~\ref{prop: T subsolution}).
        Thus, ${u_0(x)\geq v(x)- 2\varepsilon}$.
        Since $\varepsilon >0$ is arbitrary, we deduce that $u_0(x)\geq v(x)$. 
        Since $x\in X$ is arbitrary, we deduce that $u_0\geq v$. Since $v$ is an arbitrary subsolution, we conclude that $u_0$ is Perron's solution of~\eqref{eqn: Gs}. 
        Corollary~\ref{cor: Perron} is proven.
\end{proof}

\section{Stability results}\label{sec: stability}

\subsection{\texorpdfstring{$\mathcal{L}^\infty$}{}--stability with respect to the potential}
In this subsection, we discuss the stability of the Perron solution of equation~\eqref{eqn: Gs} in terms of the data $\ell$.
Albeit simple, the following example shows that we cannot expect stability in full generality.

\begin{example}\normalfont
    For $n\in \N$, let $\ell_n:\R\to[0, +\infty)$ given by $\ell_n(n^2)=0$ and $\ell_n(x)=n^{-1}$ for all $x\neq n^2$. 
    Clearly, $\{\ell_n\}_n$ converges uniformly to $\ell_\infty\equiv 0$.
    Also, the solution $u_n$ of \eqref{eqn: G0} associated with $\ell_n$ is given by $u_n(x)= n^{-1}|x-n^2|$, and $u_\infty\equiv 0$.
    However, $u_n(0)=n$ for all $n\in \N$, so $\{u_n\}$ does not converges pointwise to $u_\infty$. 
    Moreover, $u_n$ converges (uniformly in bounded sets) to the constant function equal to $+\infty$.
\end{example}

The above example shows that we need to assume the boundedness of $X$.
Let  $\ell$, $\ell_n \in \mathrm{LSC}(X)$ be such that $\inf_X \ell = \inf_X \ell_n =0$ for all $n\in\N$, and let $u,~u_n$ be the respective Perron solutions of $\eqref{eqn: Gs}$ associated with $\ell$ and $\ell_n $. 
We prove that if $X$ is bounded and $\{\ell_n\}_n$ converges to $\ell$ uniformly, then the sequence $\{u_n\}_n$ converges to $u$ uniformly.
Let us start with the following proposition.
The diameter of $X$ is denoted by 
    \begin{align*}
        \mathrm{diam}(X) := \sup \{ d(x, y): ~ x, y \in X \}.
    \end{align*}

\begin{proposition}\label{prop: perron bounded}
    Let $X$ be a bounded metric space and let $\lambda\geq 0$. 
    Let $\ell:X\to [0, + \infty)$ be a lsc function such that $\textstyle \inf_X \ell = 0$.
    Then, the function $v(x):=\ell(x)\textup{diam}(X)$ is a supersolution of~\eqref{eqn: Gs} associated with $\ell$.
    Moreover, $T^\infty_\lambda v$ is the Perron solution of~\eqref{eqn: Gs}.
\end{proposition}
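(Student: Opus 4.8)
The statement has two parts: (1) $v(x) := \ell(x)\,\mathrm{diam}(X)$ is a supersolution of~\eqref{eqn: Gs}, and (2) $T^\infty_\lambda v$ is the Perron solution. For part (1), I would first observe that $v$ is lsc (it is a nonnegative multiple of the lsc function $\ell$) and that $\inf_X v = \mathrm{diam}(X)\inf_X\ell = 0$. The substantive point is the inequality $\lambda v(x) + G[v](x) \geq \ell(x)$ for every $x\in X$. Fix $x\in X$. If $\ell(x)=0$ there is nothing to prove since the left side is nonnegative. If $\ell(x)>0$, pick any $y\in[\ell=0]$ if such a point exists, or more generally a point $y$ with $\ell(y)$ small; then $v(y) = \ell(y)\,\mathrm{diam}(X)$ is small, while $d(x,y)\leq\mathrm{diam}(X)$, so
\[
G[v](x) \geq \frac{v(x)-v(y)}{d(x,y)} = \frac{(\ell(x)-\ell(y))\,\mathrm{diam}(X)}{d(x,y)} \geq \ell(x)-\ell(y),
\]
using $d(x,y)\leq\mathrm{diam}(X)$ and $\ell(x)\geq\ell(y)$. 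Since $\inf_X\ell=0$, taking $\ell(y)\to 0$ gives $G[v](x)\geq\ell(x)$, hence $\lambda v(x)+G[v](x)\geq\ell(x)$. (One must be a little careful if the infimum of $\ell$ is not attained: choose a sequence $y_k$ with $\ell(y_k)\to 0$; for $k$ large $\ell(y_k)<\ell(x)$ so the quotient is legitimate, and the lower bound $\ell(x)-\ell(y_k)\to\ell(x)$ passes to the supremum defining $G[v](x)$.)

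\textbf{Part (2).} Once $v$ is known to be a supersolution, Theorem~\ref{thm: existence} immediately tells us that $T^\infty_\lambda v$ is a solution of~\eqref{eqn: Gs}. To identify it as the Perron (maximal) solution, I would invoke Theorem~\ref{thm: T characterization}: since $v$ is a supersolution,
\[
T^\infty_\lambda v(x) = \sup\{w(x) : w\text{ is a solution of }\eqref{eqn: Gs}\text{ and } w\leq v\}.
\]
So it suffices to check that every solution $w$ of~\eqref{eqn: Gs} satisfies $w\leq v$. This is a pointwise estimate: for a solution $w$ one has $G[w]\leq\ell$ (indeed $\lambda w + G[w]=\ell$ and $w\geq 0$), so for any $x$ and any $y\in X$, $w(x)-w(y)\leq G[w](x)\,d(x,y)\leq\ell(x)\,d(x,y)$. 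Choosing $y$ along a sequence with $w(y)\to\inf_X w = 0$ and bounding $d(x,y)\leq\mathrm{diam}(X)$ yields $w(x)\leq\ell(x)\,\mathrm{diam}(X)=v(x)$. Hence $w\leq v$ for every solution $w$, so the supremum over solutions $w\leq v$ coincides with the supremum over all solutions, i.e. $T^\infty_\lambda v = u_\lambda$, the Perron solution.

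\textbf{Expected main obstacle.} There is no deep obstacle here; the proof is a short chain of elementary estimates built on previously established results (Theorem~\ref{thm: existence} and Theorem~\ref{thm: T characterization}). The only point requiring mild care is handling the case where $\inf_X\ell$ is not attained — there one works with an approximating sequence $y_k$ with $\ell(y_k)\to 0$ rather than an exact minimizer, and must make sure the quotient $\tfrac{v(x)-v(y_k)}{d(x,y_k)}$ is a legitimate term in the supremum defining $G[v](x)$ (which it is, once $\ell(y_k)<\ell(x)$ forces $v(y_k)<v(x)$ and $y_k\neq x$). A symmetric remark applies to the bound $w\leq v$ in part (2). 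Everything else is routine.
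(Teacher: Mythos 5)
Your proposal is correct and follows essentially the same route as the paper: the supersolution inequality $G[v](x)\geq \ell(x)$ is obtained by testing against points $y$ where $v(y)$ is nearly $\inf_X v=0$ and bounding $d(x,y)\leq \mathrm{diam}(X)$, and the Perron property follows from Theorem~\ref{thm: T characterization} once every solution (the paper in fact bounds every subsolution, an immaterial difference) is shown to lie below $v$ via $G[w]\leq\ell$. No gaps.
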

\begin{proof}
    Since $\inf_X \ell = 0$, it follows that $\inf_X v = 0$. 
    So, for any $x\in X$, direct computations lead to
    \begin{align}\label{eq: G bounded}   
    G[v](x)\geq \inf_{y\neq x} \dfrac{\ell(x)\textup{diam}(X)}{d(x,y)}\geq \ell(x).    
    \end{align}
    Thus, $v$ is a supersolution of \eqref{eqn: Gs}. 
    Let $w:X\to\R$ be any subsolution of $\eqref{eqn: Gs}$. Then, $\inf_X w(x)=0$ and $G[w](x)\leq \ell(x)-\lambda w(x)\leq \ell(x)$. 
    Therefore, reasoning as in \eqref{eq: G bounded} we deduce that ${w(x)\leq \ell(x)\textup{diam}(X)}$.
    Finally, thanks to Theorem~\ref{thm: T characterization}, $T^\infty v$ is the Perron solution of equation~\eqref{eqn: Gs}.
\end{proof}

Let us proceed with the following lemma, which is needed to prove the stability of the Perron solution of \eqref{eqn: Gs} in terms of $\ell$.
Observe that if $X$ is a bounded metric space, then any lsc function $\ell:X\to[0, + \infty)$, with $\inf_X \ell=0$, satisfies~\eqref{eq: hyp}.

\begin{lemma}\label{lem: stability2}
    Let $X$ be a bounded metric space and let $\lambda\geq 0$. 
    Let $\ell, \kk :X\to [0,+\infty)$ be lsc functions such that $\inf_X \ell=\inf_X \kk =0$. 
    Let $u,v:X\to\R$ be the Perron solutions of equation \eqref{eqn: Gs} associated with  $\ell$ and $\kk$ respectively.
    Then, for any $\rho>0$ we have that
    \[v(x)-u(x)\leq (\rho+\|\kk-\ell \|_\infty)\textup{diam}(X) +\|\kk-\ell \|_\infty \dfrac{u(x)}{\rho}, \quad \text{ for all } x \in X. \]
\end{lemma}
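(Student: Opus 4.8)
The plan is to exploit the explicit formula for the Perron solution provided by Proposition~\ref{prop: perron bounded}, namely $v = T^\infty_{\lambda,\kk}(\kk\,\mathrm{diam}(X))$ together with $T_{\lambda,\kk}v = v$. Fix $x \in X$. Since $v(x) = T_{\lambda,\kk}v(x) \le T^n_{\lambda,\kk}v(x)$ in fact equals its $n$-fold iterate, I would unfold the identity $T^n_{\lambda,\kk}v(x)=v(x)$ using the explicit formula \eqref{eq: Tnu}: for any $\varepsilon>0$ there is a finite sequence $\{x_j\}_{j=0}^{n}$ with $x_0=x$ such that
\[
v(x) + \varepsilon \ge \frac{v(x_n)}{\prod_{k=0}^{n-1}(1+\lambda d(x_k,x_{k+1}))} + \sum_{j=0}^{n-1}\frac{\kk(x_j)\,d(x_j,x_{j+1})}{\prod_{k=0}^{j}(1+\lambda d(x_k,x_{k+1}))}.
\]
The key comparison step is to replace $\kk$ by $\ell$ at a cost: $\kk(x_j) \ge \ell(x_j) - \|\kk-\ell\|_\infty$, so the sum on the right dominates the analogous sum with $\ell$, minus an error term $\|\kk-\ell\|_\infty\sum_j A_j^n d(x_j,x_{j+1})$, where $A_j^n = \prod_{k=0}^j (1+\lambda d(x_k,x_{k+1}))^{-1}$. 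For $\lambda>0$ the telescoping identity \eqref{eq: tri equality} bounds $\sum_j A_j^n d(x_j,x_{j+1}) \le 1/\lambda$, while for $\lambda=0$ one has $A_j^n=1$ and $\sum_j d(x_j,x_{j+1})$ is not a priori bounded — this is where the truncation trick comes in. Meanwhile the remaining terms (those built from $\ell$, plus the tail term $A_{n-1}^n v(x_n)$) are bounded below by $T^n_{\lambda,\ell}(\text{something}) \ge u(x)$ since $u = T^\infty_{\lambda,\ell}(\ell\,\mathrm{diam}(X))$ is the Perron solution for $\ell$; one must only check that the tail term $A_{n-1}^n v(x_n)$ sits correctly, using $v(x_n) \le \kk(x_n)\mathrm{diam}(X) \le (\ell(x_n)+\|\kk-\ell\|_\infty)\mathrm{diam}(X)$.

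The $\lambda=0$ case (which is the delicate one, and I expect it to be the main obstacle) is handled by the parameter $\rho$: one should not run the telescoping sum over the whole path, but only up to the first index $m$ where the partial length $\sum_{j<m} d(x_j,x_{j+1})$ first exceeds $u(x)/\rho + \mathrm{diam}(X)$, or stop at $n$ if this never happens. On the initial block $j<m$ one writes $\sum_{j<m}\kk(x_j)d(x_j,x_{j+1}) \ge \sum_{j<m}\ell(x_j)d(x_j,x_{j+1}) - \|\kk-\ell\|_\infty \cdot (u(x)/\rho + \mathrm{diam}(X) + \rho\cdot\text{one step})$ — the point being that on this block the total length is controlled by construction. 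The remaining tail of the $\kk$-sum together with the endpoint term is bounded below by $u(x_m)+ (\text{something})\ge 0$, or more precisely, by the subsolution property of $u$ (i.e. $T_{0,\ell}u=u$) applied along the sub-path starting at $x_m$. The role of $\rho$: if the path never reaches length $u(x)/\rho+\mathrm{diam}(X)$, the error is already of order $\|\kk-\ell\|_\infty(u(x)/\rho+\mathrm{diam}(X))$; if it does, the length over which we must pay $\|\kk-\ell\|_\infty$ on the leading block is still $\lesssim u(x)/\rho+\mathrm{diam}(X)$ because we stop at the first such index, and a single extra step $d(x_{m-1},x_m)\le\mathrm{diam}(X)$ only contributes to the $(\rho+\|\kk-\ell\|_\infty)\mathrm{diam}(X)$ term.

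Assembling: after dividing through appropriately one gets, for every $\varepsilon>0$,
\[
v(x) - u(x) \le (\rho + \|\kk-\ell\|_\infty)\,\mathrm{diam}(X) + \|\kk-\ell\|_\infty\,\frac{u(x)}{\rho} + \varepsilon,
\]
and letting $\varepsilon\to 0$ yields the claim. (For $\lambda>0$ the same bound holds with room to spare, since the discount already forces the relevant length to be at most $1/\lambda$, independently of $\rho$; one can even dispense with the $\rho$-term there, but stating the single uniform inequality covering both cases is cleaner.) The main obstacle, as indicated, is organizing the $\lambda=0$ truncation so that the ``extra step'' at the cutoff index is absorbed cleanly into the $(\rho+\|\kk-\ell\|_\infty)\mathrm{diam}(X)$ term without circular dependence on $v(x)$ or $u(x)$; everything else is bookkeeping with the formula \eqref{eq: Tnu} and the identities \eqref{eq: tri equality} and $T_{\lambda,\ell}u = u$.
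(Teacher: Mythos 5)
Your overall strategy --- unfold the Perron solution along a near--optimal discrete path via \eqref{eq: Tnu}, swap the two potentials at a cost of $\|\kk-\ell\|_\infty$ times a controlled (discounted) length, and truncate the path to control that length --- is the right one and is essentially the paper's. But the execution has a genuine directional error. You unfold the identity $T^n_{\lambda,\kk}v=v$ at $x$, i.e.\ you take a near--minimizer for the $\kk$--problem. Such a path gives $v(x)+\varepsilon\geq(\text{path value})$, so after replacing $\kk$ by $\ell$ it can only produce a \emph{lower} bound on $v(x)$, hence an upper bound on $u(x)-v(x)$ --- and, since the length of that path is controlled by the value it nearly attains, the controlling quantity is $v(x)/\rho$, not $u(x)/\rho$. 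The inequality you assert at the end, $v(x)-u(x)\leq(\rho+\|\kk-\ell\|_\infty)\mathrm{diam}(X)+\|\kk-\ell\|_\infty u(x)/\rho$, does not follow from the chain you set up. (Your chain, correctly completed, yields the statement with the roles of $(\ell,u)$ and $(\kk,v)$ interchanged; that is equivalent to the lemma after relabelling, but you neither perform nor note this relabelling.) The paper does the reverse: it takes a near--minimizer $\{x_i\}_{i=0}^n$ for $u=T^\infty_{\lambda,\ell}(\ell\,\mathrm{diam}(X))$, so that the $\ell$--value along the path is at most $u(x)+\varepsilon$, and then uses a truncation of this same path as a \emph{competitor} for $v$, which is what an upper bound on $v(x)$ requires.

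Two further steps in your sketch would fail even after fixing the direction. First, the tail term $A^n_{n-1}v(x_n)$: to push your chain of inequalities through you need a \emph{lower} bound on it, and the estimate you invoke, $v(x_n)\leq\kk(x_n)\mathrm{diam}(X)$, is an upper bound and therefore useless there. Second, the truncation criterion: cutting at the first index where the cumulative length exceeds $u(x)/\rho+\mathrm{diam}(X)$ controls the length by fiat but leaves the endpoint correction uncontrolled. The paper instead cuts at the first index where $\ell$ drops below $\rho$ (after disposing of the trivial case $\ell(x)\leq\rho$ separately). This single choice does both jobs at once: on the initial block each step contributes at least $\rho$ times its discounted length to a sum bounded by $u(x)+\varepsilon$, which gives $\sum_{i}A^n_i d(x_i,x_{i+1})\leq(u(x)+\varepsilon)/\rho$; and at the cut index one has $\ell<\rho$, hence $\kk<\rho+\|\kk-\ell\|_\infty$, so the endpoint term $\kk(x_{\max(I)+1})\mathrm{diam}(X)$ needed to close the competitor for $v$ (via Proposition~\ref{prop: perron bounded}) is exactly the first error term $(\rho+\|\kk-\ell\|_\infty)\mathrm{diam}(X)$. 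Without a potential--based cutoff this endpoint term has no bound, and the argument does not close.
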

\begin{proof}
   
    Fix $\varepsilon>0$ and $\rho>0$.
    Thanks to Proposition~\ref{prop: perron bounded}, $u$ and $v$ are bounded from above by $\textup{diam}(X)\ell$ and $\textup{diam}(X)\kk $ respectively, $u = T^\infty(\ell \mathrm{diam}(X))$ and $v = T^\infty(\mathscr{k}\mathrm{diam}(X))$. 
    Let $x\in X$. 
    Assume that $\ell(x)\leq \rho$. 
    Then, 
    \begin{align*}
        v(x)\leq \kk(x)\textup{diam}(X)\leq (\rho+\|\kk-\ell\|_\infty)\textup{diam}(X).
    \end{align*}
    From the above inequality, we get 
    \begin{align}\label{eq: ineq 1}
        v(x)-u(x) \leq (\rho+\|\kk-\ell\|_\infty)\textup{diam}(X).
    \end{align}
    Now, assume that $\ell(x)> \rho$. 
    Let $\{x_i\}_{i=0}^n\subset X$ be a sequence such that $x_0=x$ and 
    \[u(x)+\varepsilon \geq \frac{\ell(x_n)\textup{diam}(X)}{\prod_{j=0}^{n-1} 1+\lambda d(x_j,x_{j+1})}+\sum_{i=0}^{n-1}\frac{\ell(x_i)d(x_i,x_{i+1})}{\prod_{j=0}^i 1+\lambda d(x_j,x_{j+1})}.\]
    Let $I\subset\{0,...,n\}$ be the largest interval of integers containing $0$ such that $i\in I$ if $\ell(x_i)\geq \rho$. 
    Then, if $n\notin I$ we deduce that 
    \begin{align}\label{eq: bounded length 22}
    \dfrac{u(x)+\varepsilon}{\rho}\geq \sum_{i=0}^{\max(I)}\frac{d(x_i,x_{i+1})}{\prod_{j=0}^i 1+\lambda d(x_j,x_{j+1})}.    
    \end{align}
    On the other hand, if $n\in I$, we deduce that
    \begin{align}\label{eq: bounded length 3}
    \dfrac{u(x)+\varepsilon}{\rho}\geq \frac{\textup{diam}(X)}{\prod_{j=0}^{n-1} 1+\lambda d(x_j,x_{j+1})}+\sum_{i=0}^{n-1}\frac{d(x_i,x_{i+1})}{\prod_{j=0}^i 1+\lambda d(x_j,x_{j+1})}.    
    \end{align}
    \textbf{Case 1}: $\max(I)<n$.
    Then $\ell(x_{\max(I)+1})< \rho$ and $\kk(x_{\max(I)+1})<\rho+\|\kk-\ell\|_\infty$. 
    Using~\eqref{eq: bounded length 22} and Proposition~\ref{prop: perron bounded}, we compute
    \begin{align*}
        u(x)+\varepsilon&\geq \sum_{i=0}^{\max(I)}\frac{\ell(x_i)d(x_i,x_{i+1})}{\prod_{j=0}^i 1+\lambda d(x_j,x_{j+1})}\\
        &\geq \sum_{i=0}^{\max(I)}\frac{\kk(x_i)d(x_i,x_{i+1})}{\prod_{j=0}^i 1+\lambda d(x_j,x_{j+1})} + \dfrac{\kk(x_{\max(I)+1})\textup{diam}(X)}{\prod_{j=0}^{\max(I)} 1+\lambda d(x_j,x_{j+1})}\\
        &-\dfrac{(\rho+\|\kk-\ell\|_\infty)\textup{diam}(X)}{\prod_{j=0}^{\max(I)} 1+\lambda d(x_j,x_{j+1})}-\|\kk-\ell\|_\infty \sum_{i=0}^{\max(I)}\frac{d(x_i,x_{i+1})}{\prod_{j=0}^i 1+\lambda d(x_j,x_{j+1})}\\
        & \geq v(x)-(\rho+\|\kk-\ell\|_\infty)\textup{diam}(X) -\|\kk-\ell\|_\infty \dfrac{u(x)+\varepsilon}{\rho}.
    \end{align*}
    Therefore, we have that
    \begin{align}\label{eq: ineq 2}
        v(x)-u(x)\leq \varepsilon+(\rho+\|\kk-\ell\|_\infty)\textup{diam}(X) +\|\kk-\ell\|_\infty \dfrac{u(x)+\varepsilon}{\rho}.
    \end{align}

    \textbf{Case 2}: $\max(I)=n$. We can proceed as above but using~\eqref{eq: bounded length 3} instead of~\eqref{eq: bounded length 22} and obtain
    \begin{equation}\label{kk}
       v(x) - u(x) \leq \varepsilon + \|\kk-\ell\|_\infty \dfrac{u(x)+\varepsilon}{\rho}.
    \end{equation}
    
    Combining the inequalities~\eqref{eq: ineq 1}, ~\eqref{eq: ineq 2} and \eqref{kk}, we deduce that, for all $x\in X$:
    \[
    v(x)-u(x)\leq \varepsilon+(\rho+\|\kk-\ell\|_\infty)\textup{diam}(X) +\|\kk-\ell\|_\infty \dfrac{u(x)+\varepsilon}{\rho}, 
    \]
    independently of the behavior of the sequence $\{ x_n \}_n$.
    Since $\varepsilon>0$ is arbitrary, we finally get 
    \[
    v(x)-u(x)\leq (\rho+\|\kk-\ell\|_\infty)\textup{diam}(X) +\|\kk-\ell\|_\infty \dfrac{u(x)}{\rho}.
    \]
\end{proof}

Now, we are able to prove the $\mathcal{L}^\infty$--stability of the Perron solution of equation~\eqref{eqn: Gs}. 
\begin{theorem}\label{thm: stability2}
    Let $X$ be a bounded metric space and let $\lambda \geq 0$. 
    Let $\ell,\ell_n:X\to [0,\infty)$ be lsc functions such that $\inf_X \ell =\inf_X \ell_n =0$.
    Further, assume that $\ell$ is bounded.
    Denote by $u$ and $u_n$ the Perron solutions of~\eqref{eqn: Gs} associated with $\ell$ and $\ell_n$ respectively.
    Then, if $\ell_n$ converges to $\ell$ uniformly, $u_n$ converges to $u$ uniformly.
\end{theorem}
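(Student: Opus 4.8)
The plan is to derive the uniform convergence of $\{u_n\}_n$ to $u$ directly from the quantitative estimate established in Lemma~\ref{lem: stability2}. The key point is that the bound there is symmetric in the roles of the two data functions, so it controls $|u_n - u|$ from both sides once we also produce a uniform a~priori bound on the solutions.

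First I would record the uniform bound on the $u_n$. Since $\ell_n \to \ell$ uniformly and $\ell$ is bounded, the sequence $\{\ell_n\}_n$ is uniformly bounded, say $\sup_n \|\ell_n\|_\infty =: L < +\infty$. By Proposition~\ref{prop: perron bounded}, each $u_n$ is bounded above by $\ell_n \,\mathrm{diam}(X)$, hence $0 \le u_n(x) \le L\,\mathrm{diam}(X)$ for all $x \in X$ and all $n$; the same bound holds for $u$. Call this common bound $C := L\,\mathrm{diam}(X)$ (note $\mathrm{diam}(X) < +\infty$ since $X$ is bounded).

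Next I would apply Lemma~\ref{lem: stability2} twice, with the pair $(\ell, \ell_n)$ in both orders. Writing $\varepsilon_n := \|\ell_n - \ell\|_\infty \to 0$, the lemma gives, for every $\rho > 0$ and every $x \in X$,
\[
u_n(x) - u(x) \le (\rho + \varepsilon_n)\,\mathrm{diam}(X) + \varepsilon_n \,\frac{u(x)}{\rho} \le (\rho + \varepsilon_n)\,\mathrm{diam}(X) + \varepsilon_n\,\frac{C}{\rho},
\]
and symmetrically
\[
u(x) - u_n(x) \le (\rho + \varepsilon_n)\,\mathrm{diam}(X) + \varepsilon_n\,\frac{u_n(x)}{\rho} \le (\rho + \varepsilon_n)\,\mathrm{diam}(X) + \varepsilon_n\,\frac{C}{\rho}.
\]
Hence $\|u_n - u\|_\infty \le (\rho + \varepsilon_n)\,\mathrm{diam}(X) + \varepsilon_n\, C/\rho$ for every $\rho > 0$. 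Given $\eta > 0$, first fix $\rho > 0$ small enough that $\rho\,\mathrm{diam}(X) < \eta/2$, then choose $N$ so that for $n \ge N$ we have $\varepsilon_n\big(\mathrm{diam}(X) + C/\rho\big) < \eta/2$; this yields $\|u_n - u\|_\infty < \eta$ for all $n \ge N$, which is exactly uniform convergence.

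I do not anticipate a serious obstacle here: the real work has already been done in Lemma~\ref{lem: stability2} and Proposition~\ref{prop: perron bounded}. The only point requiring a little care is the order of quantifiers — one must fix $\rho$ before sending $n \to \infty$, since the term $\varepsilon_n/\rho$ blows up if $\rho$ is sent to $0$ first — but this is precisely the standard two-parameter argument and causes no difficulty. One should also note explicitly that the boundedness of $\ell$ (not just of each $\ell_n$) is what guarantees the uniform bound $C$ on all the $u_n$ simultaneously, which is the hypothesis that makes the $\varepsilon_n u_n(x)/\rho$ term vanish uniformly.
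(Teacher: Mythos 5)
Your proposal is correct and follows essentially the same route as the paper: a uniform a priori bound on the solutions via Proposition~\ref{prop: perron bounded}, a double application of Lemma~\ref{lem: stability2} to control $|u_n-u|$ from both sides, and the standard argument of fixing $\rho$ before letting $n\to\infty$. No gaps.
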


\begin{proof}
    Since $\ell$ is bounded, without the loss of generality we can assume that there is $M>0$ such that $\|\ell_n\|_\infty\leq M$ for all $n\in \N$ and $\|\ell\|_\infty\leq M$.
    Then, thanks to Proposition~\ref{prop: perron bounded}, $\|u_n\|_\infty\leq M \textup{diam}(X)$.
    Fix $\rho>0$. 
    So, applying Lemma~\ref{lem: stability2} twice, we get that for any $x\in X$ and any $n\in \N$

    \[|u_n (x)-u(x)| \leq (\rho+\|\ell-\ell_n\|_\infty \textup{diam}(X) )+ \|\ell-\ell_n\|_\infty \dfrac{\max(u(x),u_n(x))}{\rho}.\]
    Therefore
    \[\|u_n-u\|_\infty \leq (\rho+\|\ell-\ell_n\|_\infty \textup{diam}(X) )+ \|\ell-\ell_n\|_\infty \dfrac{M\textup{diam}(X)}{\rho}.\]
    Taking limit superior as $n$ tends to $+ \infty$ we get that
    \[\limsup_{n\to\infty}\|u_n-u\|_\infty \leq \rho \textup{diam}(X).\]
    Since $\rho>0$ is arbitrary, after sending $\rho$ to $0$, we show that the above limit is equal to $0$.
\end{proof}
\begin{remark}\normalfont
% \textcolor{red}{TO BE CHECKED}
    A careful inspection of the proof of Lemma~\ref{lem: stability2} shows that the norm $\|\cdot\|_\infty$ can be replaced by its asymmetric version $\|\cdot|_\infty$, that is $\|f|_\infty=\sup_{x \in X} \max \{ f(x), 0 \}$. 
    This is related to the unilateral results obtained in~\cite{DD_2023}, in which the convexity of the functions somehow plays the role of boundedness of the space. 
    In the context of~\Cref{lem: stability2}, we can obtain the following \textit{unilateral} estimate:
        \begin{align*}
            v(x) - u(x) \leq \left( \rho + \| \mathscr{k} - \ell |_{\infty} \right) \text{diam}(X) + \| \mathscr{k} - \ell |_{\infty}\dfrac{u(x)}{\rho}, \text{ for all } x \in X. 
        \end{align*}
    Consequently, we get
        \begin{align*}
            \| v - u |_{\infty} \leq \left( \rho + \| \mathscr{k} - \ell |_{\infty} \right) \text{diam}(X) + \| \mathscr{k} - \ell |_{\infty}\dfrac{\| \ell \|_{\infty} \mathrm{diam}(X)}{\rho}, \text{ for all } \rho > 0.
        \end{align*}
\end{remark}

  \subsection{Convergence of the discounted solutions}\label{subsec: discount converg}
    As a consequence of \Cref{thm: stability2}, we show that if the underlying metric space is bounded, then the Perron solution of \eqref{eqn: Gs} for $\lambda>0$ converges uniformly to the solution of \eqref{eqn: G0} as $\lambda$ tends to $0$. 
    \begin{corollary}\label{cor: stability fathi}
        Let $X$ be a bounded metric space and let $\ell:X\to[0, +\infty)$ be a bounded and lsc function such that $\inf_X \ell=0$.
        For any $\lambda\geq 0$, denote by $u_\lambda$ the Perron solution of~\eqref{eqn: Gs} associated with $\lambda$. Then, the following assertions hold true. 
        \begin{enumerate}
            \item[(a)] $u_\lambda$ converges to $u_0$ uniformly as $\lambda$ tends to $0$.
            \item[(b)] If $X$ is compact and $\alpha>0$, then $u_\lambda$ converges to $u_\alpha$ uniformly as $\lambda$ tends to $\alpha$.
        \end{enumerate}
    \end{corollary}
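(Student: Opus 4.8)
The plan is to reduce both statements to the $\mathcal{L}^{\infty}$--stability of the Perron solution with respect to the potential, \Cref{thm: stability2}, by rewriting each discounted equation as an equation carrying the \emph{same} discount factor as the target problem but a perturbed right-hand side which converges uniformly to $\ell$. Throughout I write $D := \mathrm{diam}(X)$ and $M := \|\ell\|_{\infty}$, and I use repeatedly that $0 \le u_{\lambda} \le \ell\, D \le MD$ for every $\lambda \ge 0$, by \Cref{prop: perron bounded}.

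For (a), one inequality is free: since $G[u_{\lambda}] = \ell - \lambda u_{\lambda} \le \ell$ and $\inf_{X} u_{\lambda} = 0$, the function $u_{\lambda}$ is a subsolution of \eqref{eqn: G0}, hence $u_{\lambda} \le u_{0}$ by (the proof of) \Cref{cor: Perron}. For the reverse inequality I would, for $\lambda > 0$, introduce the shifted datum
\[
    \hat{\ell}_{\lambda} := (\ell - \lambda MD)_{+},
\]
which is lsc, nonnegative, satisfies $\inf_{X}\hat{\ell}_{\lambda} = 0$, $\|\hat{\ell}_{\lambda} - \ell\|_{\infty} \le \lambda MD$, and — as $X$ is bounded — satisfies $(H_{0})$. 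Let $\hat{u}_{\lambda}$ denote the Perron solution of \eqref{eqn: G0} with datum $\hat{\ell}_{\lambda}$ (it exists by \Cref{cor: Perron}); thus $G[\hat{u}_{\lambda}] = \hat{\ell}_{\lambda}$, $\inf_{X}\hat{u}_{\lambda} = 0$, and $\hat{u}_{\lambda} \le \hat{\ell}_{\lambda} D$ by \Cref{prop: perron bounded}. The crucial observation is that $\hat{u}_{\lambda}$ is in fact a subsolution of \eqref{eqn: Gs}: for every $x$ one has $\lambda\hat{u}_{\lambda}(x) + G[\hat{u}_{\lambda}](x) = \lambda\hat{u}_{\lambda}(x) + \hat{\ell}_{\lambda}(x) \le \ell(x)$, which is checked by distinguishing the case $\ell(x) \ge \lambda MD$ (where $\ell(x) - \hat{\ell}_{\lambda}(x) = \lambda MD \ge \lambda\hat{u}_{\lambda}(x)$ because $\hat{u}_{\lambda}(x) \le \hat{\ell}_{\lambda}(x)D \le MD$) from the case $\ell(x) < \lambda MD$ (where $\hat{\ell}_{\lambda}(x) = 0$ forces $\hat{u}_{\lambda}(x) = 0$, making the inequality trivial). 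Since $\hat{u}_{\lambda} \le \hat{\ell}_{\lambda}D \le \ell\, D$ and $T^{\infty}_{\lambda,\ell}(\ell\, D) = u_{\lambda}$ by \Cref{prop: perron bounded}, \Cref{thm: T characterization} gives $\hat{u}_{\lambda} \le u_{\lambda}$. Thus $\hat{u}_{\lambda} \le u_{\lambda} \le u_{0}$; since $\hat{\ell}_{\lambda} \to \ell$ uniformly as $\lambda \to 0^{+}$, \Cref{thm: stability2} (applied along sequences, with discount factor $0$) gives $\hat{u}_{\lambda} \to u_{0}$ uniformly, and the two-sided bound forces $u_{\lambda} \to u_{0}$ uniformly.

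For (b), where $X$ is compact — hence complete and bounded — and $\alpha > 0$, I would set, for $\lambda$ near $\alpha$,
\[
    \ell^{\lambda} := \alpha u_{\lambda} + G[u_{\lambda}] = \ell + (\alpha - \lambda)u_{\lambda}.
\]
The first expression exhibits $\ell^{\lambda}$ as a sum of the lsc functions $\alpha u_{\lambda}$ (using $\alpha \ge 0$) and $G[u_{\lambda}]$ (lsc by \Cref{prop: 22}), hence $\ell^{\lambda}$ is lsc and nonnegative; the second expression shows it is bounded with $\|\ell^{\lambda} - \ell\|_{\infty} \le |\alpha - \lambda|\, MD$. By compactness $u_{\lambda}$ attains its minimum $0$ at some $\bar{x}$, where necessarily $G[u_{\lambda}](\bar{x}) = 0$ (a global minimiser has zero global slope), so $\ell^{\lambda}(\bar{x}) = 0$ and $\inf_{X}\ell^{\lambda} = 0$. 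By construction $u_{\lambda}$ solves \eqref{eqn: Gs} with discount factor $\alpha$ and datum $\ell^{\lambda}$, and since $X$ is complete and $\ell^{\lambda}$ is bounded, \Cref{cor: uniqueness lambda} shows this is the unique such solution, hence the Perron solution of \eqref{eqn: Gs} with discount factor $\alpha$ and datum $\ell^{\lambda}$. As $\ell^{\lambda} \to \ell$ uniformly when $\lambda \to \alpha$, \Cref{thm: stability2} (applied along sequences, with discount factor $\alpha$) shows that these Perron solutions converge uniformly to the Perron solution of \eqref{eqn: Gs} with discount factor $\alpha$ and datum $\ell$, that is, $u_{\lambda} \to u_{\alpha}$ uniformly.

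The main obstacle is not a single hard estimate but rather getting the bookkeeping right: in (a) one must hit upon the shifted equation with datum $\hat{\ell}_{\lambda}$ and verify that its Perron solution is itself a subsolution of the discounted equation \eqref{eqn: Gs} — this is precisely what closes the sandwich — while in both parts one must check that the perturbed data are admissible for \Cref{thm: stability2}; here the delicate points are the lower semicontinuity of $\ell^{\lambda}$, for which one must keep the form $\alpha u_{\lambda} + G[u_{\lambda}]$ rather than $\ell + (\alpha-\lambda)u_{\lambda}$ when $\lambda > \alpha$, and the normalisation $\inf_{X}\ell^{\lambda} = 0$, which is where compactness enters.
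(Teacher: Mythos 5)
Your proposal is correct, but it follows a genuinely different route from the paper's. The paper's proof of both parts pivots on \Cref{prop: facts}~(c) --- the fact that $u_\lambda$ is the Perron solution of~\eqref{eqn: G0} associated with $\ell_\lambda:=\ell-\lambda u_\lambda$ --- whose proof passes through the completion of $X$ and the uniqueness result \Cref{cor: TZDSL}; for (a) it then applies \Cref{thm: stability2} at discount $0$ to $\ell_\lambda\to\ell$, and for (b) it uses the monotonicity of $\lambda\mapsto u_\lambda$ together with Dini's theorem to extract a uniform limit, which is then identified a posteriori. You avoid \Cref{prop: facts}~(c) altogether. In (a) you replace it by a sandwich: the truncated datum $(\ell-\lambda MD)_+$ produces, via \Cref{prop: perron bounded} and \Cref{thm: T characterization}, an explicit subsolution of~\eqref{eqn: Gs} squeezed between nothing and $u_\lambda\leq u_0$, and the one-sided stability closes the argument; the case analysis verifying the subsolution property is complete and the bounds $\hat u_\lambda\leq\hat\ell_\lambda D\leq MD$ are exactly what is needed. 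In (b) you recast $(\EuScript{G}_\lambda)$ as $(\EuScript{G}_\alpha)$ with datum $\ell^\lambda=\alpha u_\lambda+G[u_\lambda]$, invoke \Cref{cor: uniqueness lambda} on the compact (hence complete) space to see that $u_\lambda$ \emph{is} the Perron solution of that perturbed problem, and apply \Cref{thm: stability2} at discount $\alpha$; this treats $\lambda\to\alpha^-$ and $\lambda\to\alpha^+$ symmetrically and dispenses with Dini. Your verifications of admissibility (lower semicontinuity of $\ell^\lambda$ via \Cref{prop: 22}, the normalisation $\inf_X\ell^\lambda=0$ at a minimiser of the continuous function $u_\lambda$ on the compact $X$) are the right delicate points and are handled correctly. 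What each approach buys: the paper's detour through the completion yields \Cref{prop: facts}~(c) as a reusable tool (it is also used in \Cref{prop.lambda-to-alpha}) and exposes the monotone structure of $\{u_\lambda\}_\lambda$; your argument is more self-contained, leaning only on the stability theorem, the Perron characterisation, and (for (b)) the comparison principle.
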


    Let us start proving some useful facts.
    \begin{proposition}\label{prop: facts}
        Let $X$ be a metric space and $\ell:X\to [0, + \infty)$ be a lsc function such that $\inf_X\ell=0$. Under the same notation of Corollary~\ref{cor: stability fathi}, let $\lambda\geq 0$. Then:
        \begin{itemize}
            \item[(a)] For any $0\leq \alpha<\beta$, $u_\beta\leq u_\alpha $.
            \item[(b)] Assume that $\ell$ is bounded, denote by $\widetilde{X}$ the completion of $X$ and by $\widetilde{\ell}:\widetilde{X}\to [0,+\infty)$ the maximal lsc extension of $\ell$ to $\widetilde{X}$. 
            Also, denote by $v_\lambda:\widetilde{X}\to\R$ the Perron solution of~\eqref{eqn: Gs} associated with $\widetilde{\ell}$.
            Then, $u_\lambda = v_\lambda$ on $X$. 
            \item[(c)] Assume that $\ell$ is bounded. Then, $u_\lambda$ is the Perron solution of~\eqref{eqn: G0} associated with $\ell_\lambda:=\ell-\lambda u_\lambda$.
        \end{itemize}
    \end{proposition}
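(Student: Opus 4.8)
The plan is to treat the three assertions one at a time, in increasing order of difficulty, each time exploiting the fact recorded in Theorem~\ref{thm: T characterization} and Corollary~\ref{cor: Perron} that for any admissible datum the Perron solution of the corresponding equation is the pointwise largest \emph{subsolution} of that equation. For \textbf{assertion (a)} I would simply observe that a solution of~\eqref{eqn: Gs} with discount $\beta$ is automatically a subsolution of~\eqref{eqn: Gs} with discount $\alpha$ whenever $0\le\alpha\le\beta$: if $u_\beta$ solves the equation with discount $\beta$, then $u_\beta\ge0$, $\inf_X u_\beta=0$ and $\beta u_\beta+G[u_\beta]\le\ell$, so $\alpha u_\beta+G[u_\beta]\le\beta u_\beta+G[u_\beta]\le\ell$ because $u_\beta\ge0$ and $\alpha\le\beta$; hence Theorem~\ref{thm: T characterization} (together with Corollary~\ref{cor: Perron}, and Assumption~\ref{ass.} if $\alpha=0$) gives $u_\beta\le u_\alpha$.

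For \textbf{assertion (b)} I would use that the maximal lsc extension is $\widetilde\ell(\tilde x)=\sup_{r>0}\inf\{\ell(y):y\in X,\ d(y,\tilde x)<r\}$; in particular $\widetilde\ell|_X=\ell$, $\widetilde\ell$ is bounded with $\inf_{\widetilde X}\widetilde\ell=0$, $\mathrm{diam}(\widetilde X)=\mathrm{diam}(X)$, and — this is the point — for every $\tilde x\in\widetilde X$ there is a sequence $y_k\in X$ with $y_k\to\tilde x$ and $\ell(y_k)\to\widetilde\ell(\tilde x)$. By Proposition~\ref{prop: perron bounded}, $u_\lambda=T^\infty_{\lambda,\ell}(\ell\,\mathrm{diam}(X))$ on $X$, while $v_\lambda$ is the limit of the iterates of the operator~\eqref{oper-T} associated to $\widetilde X$ and $\widetilde\ell$, applied to $\widetilde\ell\,\mathrm{diam}(\widetilde X)$. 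To get $v_\lambda|_X\le u_\lambda$ I would check that $v_\lambda|_X$ is a subsolution of~\eqref{eqn: Gs} on $X$: for $x\in X$ the supremum defining the global slope of $v_\lambda|_X$ in $X$ ranges over fewer competitors than the one defining the global slope of $v_\lambda$ in $\widetilde X$, so $\lambda v_\lambda(x)+G[v_\lambda|_X](x)\le\lambda v_\lambda(x)+G[v_\lambda](x)=\widetilde\ell(x)=\ell(x)$, while $\inf_X v_\lambda=0$ follows from $v_\lambda\le\widetilde\ell\,\mathrm{diam}(\widetilde X)$ and $\inf_X\widetilde\ell=0$; Theorem~\ref{thm: T characterization} then yields this inequality. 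For the converse I would use the explicit expression~\eqref{eq: Tnu}: fix $x\in X$, $n\in\N$ and a finite chain $\tilde x_0=x,\tilde x_1,\dots,\tilde x_n$ in $\widetilde X$, and replace each $\tilde x_j$ with $j\ge1$ by points $x^{(m)}_j\in X$ such that $x^{(m)}_j\to\tilde x_j$ and $\ell(x^{(m)}_j)\to\widetilde\ell(\tilde x_j)$, keeping $x^{(m)}_0=x$. Since the right-hand side of~\eqref{eq: Tnu} is a continuous function of the distances $d(x_j,x_{j+1})$ and of the numbers $\ell(x_j)$, the value of the chain $(x^{(m)}_j)_j$ converges as $m\to\infty$ to the value of $(\tilde x_j)_j$ for the operator associated to $\widetilde X$ and $\widetilde\ell$; as the former bounds $T^n_{\lambda,\ell}(\ell\,\mathrm{diam}(X))(x)$ from above for every $m$ and $\widetilde\ell|_X=\ell$, passing to the infimum over all chains in $\widetilde X$ and then letting $n\to\infty$ gives $u_\lambda(x)\le v_\lambda(x)$, hence $u_\lambda=v_\lambda$ on $X$.

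For \textbf{assertion (c)} I would first note that, $u_\lambda$ being a solution of~\eqref{eqn: Gs}, one has $\ell_\lambda=\ell-\lambda u_\lambda=G[u_\lambda]$, which by Proposition~\ref{prop: 22} is lsc and nonnegative; since $\ell_\lambda\le\ell$ it also satisfies $\inf_X\ell_\lambda=0$, so $\ell_\lambda$ is an admissible datum for~\eqref{eqn: G0}. Moreover $u_\lambda$ itself solves~\eqref{eqn: G0} with datum $\ell_\lambda$ — it is lsc, $\inf_X u_\lambda=0$, and $G[u_\lambda]=\ell_\lambda$ by construction — whence $u_\lambda\le u_0^{\ell_\lambda}$ by Theorem~\ref{thm: T characterization}, where $u_0^{\ell_\lambda}$ denotes the Perron solution of~\eqref{eqn: G0} with datum $\ell_\lambda$. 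The reverse inequality is the heart of the statement. If $X$ is complete it is immediate: $u_\lambda$ and $u_0^{\ell_\lambda}$ are lsc, bounded below, have infimum $0$ and the same global slope $\ell_\lambda$, so the determination theorem of~\cite{IZ_2023} (see Corollary~\ref{cor: TZDSL}) forces $u_0^{\ell_\lambda}=u_\lambda$. For a general bounded $X$ I would reduce to this case via assertion (b): applied on $\widetilde X$ it gives $v_\lambda=u_0^{G[v_\lambda]}$ there, and a short computation — based on the superadditivity of $\liminf$ together with the lower semicontinuity of $v_\lambda$ — shows that the global slope $G[v_\lambda]=\widetilde\ell-\lambda v_\lambda$ on $\widetilde X$ is exactly the maximal lsc extension of $\ell_\lambda=\ell-\lambda u_\lambda$; applying assertion (b) once more, now with datum $\ell_\lambda$ and discount $0$, transfers the identity $v_\lambda=u_0^{G[v_\lambda]}$ back to $X$ and gives $u_0^{\ell_\lambda}=u_\lambda$.

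The hardest part will be the maximality step in (c). The naive route — verifying that an arbitrary subsolution $w$ of~\eqref{eqn: G0} with datum $\ell_\lambda$ is a subsolution of~\eqref{eqn: Gs} with datum $\ell$ — is circular, since $\lambda w+G[w]\le\lambda w+\ell-\lambda u_\lambda\le\ell$ is just a restatement of the sought inequality $w\le u_\lambda$. One is therefore forced either into the comparison/determination machinery of~\cite{IZ_2023,DLS_2023}, which requires completeness, or into the reduction to $\widetilde X$ sketched above. A lesser technical nuisance, pervasive in (b), is ensuring that the approximating points can be picked in $X$ so as to realize simultaneously the prescribed limits of the distances and of the value $\widetilde\ell(\tilde x_j)$ — which is precisely where the maximality built into $\widetilde\ell$ is used.
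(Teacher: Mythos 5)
Your proposal is correct, and parts (a) and (c) follow essentially the same route as the paper: (a) is the same one\-/line observation that $u_\beta$ is a subsolution of $(\EuScript{G}_\alpha)$, and (c) is the same reduction to the completion $\widetilde{X}$, where the determination result (Corollary~\ref{cor: TZDSL}) upgrades ``$u_\lambda$ is \emph{a} solution with datum $\ell_\lambda$'' to ``$u_\lambda$ is \emph{the Perron} solution''; you even make explicit a point the paper leaves implicit, namely that $\widetilde{\ell}-\lambda\widetilde{u}_\lambda$ is the maximal lsc extension of $\ell_\lambda$ (which holds because $u_\lambda$ is Lipschitz, hence continuous), so that part (b) can legitimately be invoked a second time with datum $\ell_\lambda$.

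The one place where you genuinely diverge is the inequality $u_\lambda\le v_\lambda$ in part (b). The paper extends $u_\lambda$ by uniform continuity to $\widetilde{u}_\lambda$ on $\widetilde{X}$, observes that $G_{\widetilde{X}}[\widetilde{u}_\lambda]=G_X[u_\lambda]$ on the dense set $X$, and then uses the lower semicontinuity of $G_{\widetilde{X}}[\widetilde{u}_\lambda]$ (Proposition~\ref{prop: 22}) together with the $\liminf$ characterization of $\widetilde{\ell}$ to propagate the subsolution inequality from $X$ to all of $\widetilde{X}$; maximality of $v_\lambda$ then gives $\widetilde{u}_\lambda\le v_\lambda$. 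You instead approximate an arbitrary finite chain in $\widetilde{X}$ by chains in $X$ through the explicit formula~\eqref{eq: Tnu}, using that the maximal lsc extension guarantees points $y\in X$ with $y\to\tilde{x}$ and $\ell(y)\to\widetilde{\ell}(\tilde{x})$ simultaneously. Both arguments are valid; the paper's is shorter and more conceptual, while yours is more self\-/contained (it does not need the identity between the two global slopes on $X$, only the trivial inequality $G_X[v_\lambda|_X]\le G_{\widetilde{X}}[v_\lambda]$ for the other direction). One small caveat: your version of (b) is phrased in terms of the supersolution $\ell\,\mathrm{diam}(X)$ of Proposition~\ref{prop: perron bounded}, which presupposes $\mathrm{diam}(X)<+\infty$, whereas the statement and the paper's proof of (b) only assume $\ell$ bounded; for $\lambda>0$ this is harmless (replace $\ell\,\mathrm{diam}(X)$ by $\ell/\lambda$ throughout), and in the intended context of Corollary~\ref{cor: stability fathi} the space is bounded anyway, so this is a cosmetic rather than a substantive gap.
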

    \begin{proof}
        $(a)$ Observe that $\alpha u_\beta +G[u_\beta]= \ell-(\beta-\alpha) u_\lambda \leq \ell$. Therefore, $u_\beta$ is a subsolution of~$(\EuScript{G}_\alpha)$.
        So, by definition of Perron solution, we have that $u_\alpha \geq u_\beta$.

        \medskip
        
        $(b)$ Along this proof, we denote by $G_X[\cdot]$ and $G_{\widetilde{X}}[\cdot]$ the global slope operator defined for functions on $X$ and on $\widetilde{X}$ respectively. 
        Note that $\widetilde{\ell}(\widetilde{x})=\liminf_{y\to \widetilde{x},~y\in X}\ell(y)$ for all $\widetilde{x}\in \widetilde{X}$. 
        Thus, $\widetilde{\ell}$ is bounded and coincide with $\ell$ on $X$. 
        So, $u_\lambda$ and $v_\lambda$ are Lipschitz functions (see Proposition~\ref{prop: 21}). 
        Denote by $\widetilde{u}_\lambda$ the unique continuous extension of $u_\lambda$ to $\widetilde{X}$. 
        By continuity of $\widetilde{u}_\lambda$ and $v_\lambda$, it readily follows that
        \begin{align}\label{eq: inside}
        G_{\widetilde{X}}[\widetilde{u}_\lambda](x)=G_X[u_\lambda](x)\quad\text{and}\quad G_{\widetilde{X}}[v_\lambda](x)=G_X[v_\lambda|_X](x),~\text{for all }x\in X,    
        \end{align}
        where $v_\lambda|_X$ denotes the restriction of $v_\lambda$ to $X$.
        To finish the proof, it is enough to show that $\widetilde{u}_\lambda=v_\lambda$.

        \medskip
        
        \textbf{Inequality}: $\widetilde{u}_\lambda\leq v_\lambda$. Thanks to Proposition~\ref{prop: 22}, $G_{\widetilde{X}}[\widetilde{u}_\lambda]$ is a lsc function. So, \eqref{eq: inside} and the fact that $u_\lambda$ is solution of~\eqref{eqn: Gs} lead to
        \[\lambda \widetilde{u}_\lambda+ G_{\widetilde{X}}[\widetilde{u}_\lambda]\leq \widetilde{\ell}. \]
        So, $\widetilde{u}_\lambda$ is a subsolution of ~\eqref{eqn: Gs} associated with $\widetilde{\ell}$ (on $\widetilde{X}$) and then, $\widetilde{u}_\lambda\leq v_\lambda$.

        \medskip

        \textbf{Inequality}: $v_\lambda\leq \widetilde{u}_\lambda$. From~\eqref{eq: inside} we deduce that $v_\lambda|_X$ is a solution of~\eqref{eqn: Gs} associated with $\ell$. Therefore, $v_\lambda|_X\leq u_\lambda$. By continuity we finally get that $v_\lambda\leq \widetilde{u}_\lambda$.\\

        $(c)$ Thanks to part $(b)$, we know that $\widetilde{u}_\lambda$ is the Perron solution of~\eqref{eqn: Gs} associated with $\widetilde{\ell}$. 
        Therefore, $\widetilde{u}_\lambda$ is a solution of~\eqref{eqn: G0} associated with $\widetilde{\kk}_\lambda:=\widetilde{\ell}-\lambda \widetilde{u}_\lambda$.
        Since $\widetilde{X}$ is complete, there is at most one solution of equation~\eqref{eqn: G0}, see \Cref{cor: TZDSL}.
        Henceforth, $\widetilde{u}_\lambda$ is the Perron solution of~\eqref{eqn: G0} associated with $\widetilde{\kk}_\lambda$.
        Using part $(b)$ once again, we obtain that $\widetilde{u}_\lambda|_X~(=u_\lambda)$ is the Perron solution of~\eqref{eqn: G0} associated with $\widetilde{\kk}_\lambda|_X=\ell-\lambda u_\lambda=:\ell_\lambda$.
    \end{proof}
    Now we can proceed with the proof of Corollary~\ref{cor: stability fathi}.
    \begin{proof}[Proof of Corollary~\ref{cor: stability fathi}]
        $(a)$ By Proposition~\ref{prop: perron bounded}, we get that $\|u_0\|\leq \mathrm{diam}(X)\sup_X \ell = C<\infty$.
        Thanks to Proposition~\ref{prop: facts}~$(a)$, the family $\{u_\lambda\}_\lambda$ is uniformly bounded from above by $C$.
        Now, consider the functions $\ell_\lambda:=\ell-\lambda u_\lambda$.
        Since $u_\lambda$ is continuous (Proposition \ref{prop: 21}) and is the Perron solution of~\eqref{eqn: Gs}, it follows that $\ell_\lambda=G[u_\lambda]$. Therefore, $\ell_\lambda$ is lsc
        and satisfies $\inf_X \ell_\lambda =0$.  
        Note now that 
        \[\|\ell -\ell_\lambda\|_\infty=\|\lambda u_\lambda\|_\infty\leq \lambda C,~\quad \text{for all }\lambda>0. \]
        Hence, $\{\ell_\lambda\}_\lambda$ converges to $\ell$ uniformly as $\lambda$ tends to $0$. 
        Now, thanks to Theorem~\ref{thm: stability2} and Proposition~\ref{prop: facts}~$(c)$, we finally deduce that $u_\lambda$ converges to $u_0$ uniformly as $\lambda$ tends to $0$.

        \medskip

        $(b)$ For the sake of simplicity, let us assume that $\alpha=1$. We first prove the case $\lambda\to 1^-$.
        Thanks to Proposition~\ref{prop: facts}~(a), we know that the family $\{u_\lambda\}_{\lambda<1}$ is decreasing (as $\lambda$ tends to $1$) and bounded from below by $u_1$. 
        Therefore, there is a function $v:X\to\R$ such that $u_\lambda$ converges to $v$ pointwise as $\lambda$ tends to $1^-$. 
        Since $X$ is compact and $u_\lambda$ is continuous, Dini's theorem implies that this convergence is uniform.
        Due to Proposition~\ref{prop: facts}~(c), $u_\lambda$ is the Perron solution of~\eqref{eqn: G0} associated with $\ell_\lambda= \ell- \lambda u_\lambda$. 
        From here, it readily follows that $\ell_\lambda$ converges to $\widetilde{\ell}:= \ell-v$ uniformly, and therefore, thanks to Theorem~\ref{thm: stability2}, $u_\lambda$ converges uniformly to a function $w$ which is the Perron solution of~\eqref{eqn: G0} associated with $\widetilde{\ell}$. 
        By uniqueness of the pointwise convergence, we deduce that $v=w$, and therefore $v$ satisfies $v+G[v]=\ell$.
        The argument for the limit of $u_\lambda$ as $\lambda$ tends to $1^+$ is analogous. 
        Indeed, the only difference is that the family $\{u_\lambda\}_{\lambda>1}$ is increasing (as $\lambda$ goes to $1$) and bounded from above by $u_1$.

    \end{proof}
    To finish this section, we present an ergodic result that does not require any assumption on $X$. 
    
    \begin{proposition}\label{prop.lambda-to-alpha}
        Let $X$ be a metric space and let $\ell:X\to [0, + \infty)$ be a lsc function such that $\inf_X \ell=0$. 
        For any $\lambda > 0$, denote by $u_\lambda$ the Perron solution of~\eqref{eqn: Gs}. Then, for any $\alpha>0$, $u_\lambda$ converges to $u_\alpha$ pointwise as $\lambda$ tends to $\alpha^-$.
    \end{proposition}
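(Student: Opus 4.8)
The plan is to show that the pointwise limit of $u_\lambda$ as $\lambda\uparrow\alpha$ is a subsolution of $(\EuScript{G}_\alpha)$, and then to conclude from the maximality of the Perron solution. First I would record the monotonicity furnished by Proposition~\ref{prop: facts}~$(a)$: the map $\lambda\mapsto u_\lambda$ is nonincreasing, so for $\lambda\in(0,\alpha)$ the family $\{u_\lambda\}$ decreases as $\lambda\uparrow\alpha$ and stays bounded below by $u_\alpha$. Hence
\[
v(x):=\lim_{\lambda\to\alpha^-}u_\lambda(x)=\inf_{0<\lambda<\alpha}u_\lambda(x)
\]
is well defined, finite (it is dominated by the real-valued function $u_\lambda$ for any fixed $\lambda<\alpha$), and satisfies $u_\alpha\le v\le u_\lambda$ for every $\lambda\in(0,\alpha)$. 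In particular $0\le v\le u_\lambda$, so $\inf_X v=0$.

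Next I would pass to the limit in the fixed-point identity for $T_{\lambda,\ell}$. For each $\lambda\in(0,\alpha)$ the function $u_\lambda$ is a solution, hence a subsolution of $(\EuScript{G}_\lambda)$, so by Proposition~\ref{prop: T subsolution} one has $T_{\lambda,\ell}u_\lambda=u_\lambda$; equivalently, for all $x,y\in X$,
\[
u_\lambda(x)\,\bigl(1+\lambda d(x,y)\bigr)\le u_\lambda(y)+\ell(x)d(x,y).
\]
Fixing $x,y\in X$ and letting $\lambda\to\alpha^-$, the left-hand side tends to $v(x)\bigl(1+\alpha d(x,y)\bigr)$ and the right-hand side to $v(y)+\ell(x)d(x,y)$, whence
\[
v(x)\le\frac{v(y)+\ell(x)d(x,y)}{1+\alpha d(x,y)}.
\]
Taking the infimum over $y\in X$ gives $v\le T_{\alpha,\ell}v$; since the reverse inequality $T_{\alpha,\ell}v\le v$ always holds, we obtain $T_{\alpha,\ell}v=v$, and Proposition~\ref{prop: T subsolution} (together with $\inf_X v=0$) then shows that $v$ is a subsolution of $(\EuScript{G}_\alpha)$.

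Finally, I would use maximality: any subsolution $w$ of $(\EuScript{G}_\alpha)$ satisfies $\alpha w\le\alpha w+G[w]\le\ell$, hence $w\le\ell/\alpha$, so Theorem~\ref{thm: T characterization} combined with Corollary~\ref{cor: Perron} yields $w\le T^\infty(\ell/\alpha)=u_\alpha$. Applying this to $w=v$ gives $v\le u_\alpha$, which together with $v\ge u_\alpha$ from the first step yields $v=u_\alpha$; that is, $u_\lambda\to u_\alpha$ pointwise as $\lambda\to\alpha^-$. I do not expect a genuine obstacle here; the only points requiring care are fixing the direction of monotonicity and justifying the termwise passage to the limit in the displayed inequality, which is immediate because the denominator stays bounded away from $0$. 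It is worth noting that, in contrast with Corollary~\ref{cor: stability fathi}~$(b)$, no compactness is assumed, so there is no Dini-type upgrade to uniform convergence, and the one-sided nature of the limit $\lambda\to\alpha^-$ is essential: it is precisely what makes $\{u_\lambda\}_{0<\lambda<\alpha}$ monotone, and hence pointwise convergent.
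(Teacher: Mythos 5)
Your proof is correct, but it reaches the key inequality $v\le u_\alpha$ by a genuinely different mechanism than the paper. The paper's argument is computational: it writes $u_\lambda=T^\infty_\lambda(2\ell)$ for all $\lambda\ge 1/2$ (normalizing $\alpha=1$), picks a finite chain $\{x_i\}_{i=0}^k$ that is $\varepsilon$-optimal for $u_1(x)$ in the explicit formula \eqref{eq: Tnu}, bounds $u_\lambda(x)\le T^k_\lambda(2\ell)(x)$ by the same finite expression with $\lambda$ in place of $1$, and exploits the continuity in $\lambda$ of that finite sum to conclude $\lim_{\lambda\to 1^-}u_\lambda(x)\le u_1(x)+\varepsilon$. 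You instead pass to the limit in the fixed-point identity $T_{\lambda,\ell}u_\lambda=u_\lambda$ (after clearing denominators, which is the right way to make the termwise limit painless), deduce $T_{\alpha,\ell}v=v$, invoke Proposition~\ref{prop: T subsolution} to see that $v$ is a subsolution of $(\EuScript{G}_\alpha)$, and then use Theorem~\ref{thm: T characterization} together with Corollary~\ref{cor: Perron} and the bound $w\le\ell/\alpha$ for subsolutions to conclude $v\le u_\alpha$. Your route is more structural and arguably cleaner: it avoids the explicit formula for $T^n$ entirely and isolates the real reason the statement holds, namely that a decreasing limit of fixed points of $T_\lambda$ is a fixed point of $T_\alpha$ and the Perron solution dominates all subsolutions. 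The paper's route is more self-contained at this spot (it does not need to re-derive that $u_\alpha$ majorizes subsolutions, only solutions, though that extension is immediate from the same references you cite). One small quibble with your closing remark: monotonicity of $\{u_\lambda\}$ holds on both sides of $\alpha$ by Proposition~\ref{prop: facts}~$(a)$, so the one-sidedness is not what buys monotone convergence; what it buys is the a priori inequality $v\ge u_\alpha$, without which your subsolution argument would only give the upper bound and could not close the identification (indeed, for $\lambda\to\alpha^+$ the same limiting argument again yields a subsolution, hence only $\le u_\alpha$, which is the wrong direction there). This does not affect the validity of your proof.
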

    \begin{proof}
        For the sake of simplicity let us fix $\alpha=1$.
        Thanks to Proposition~\ref{prop: facts}~(a), the family of functions $\{u_\lambda\}_{\lambda<1}$ is decreasing as $\lambda$ tends to $1^-$ and is bounded from below by $u_1$. 
        Therefore, $u_\lambda$ converges pointwise to a function $v:X\to \R$, as $\lambda$ tends to $1^-$, and this limit satisfies $v\geq u_1$. 
        So, it suffices to check that $v\leq u_1$. 
        Let $x\in X$ and $\varepsilon>0$.
        Thanks to Corollary~\ref{cor: Perron} and Theorem~\ref{thm: T characterization}, we deduce that $u_\lambda = T^\infty_\lambda (2\ell)$, for all $\lambda\geq 1/2$. 
        Fix $\lambda>1/2$.
        Followed by the definition of $T^\infty$~\eqref{eq: Tinfty}, there is a finite sequence $\{x_i\}_{i=0}^k\subset X$, with $x_0=x$ such that
        \begin{align*}
            u_1(x)+\varepsilon \geq \dfrac{2\ell(x_k)}{\prod_{j=0}^{k-1}(1+d(x_j,x_{j+1}))}+\sum_{i=0}^{k-1}\dfrac{\ell(x_i)d(x_i, x_{i+1})}{\prod_{j=0}^i(1+d(x_j,x_{j+1}))}. 
        \end{align*}
        Observe that the right hand side of the following inequality 
        \begin{align*}
            u_\lambda (x)\leq T^k_\lambda (2\ell)(x)\leq \dfrac{2\ell(x_k)}{\prod_{j=0}^{k-1}(1+\lambda d(x_j,x_{j+1}))}+\sum_{i=0}^{k-1}\dfrac{\ell(x_i)d(x_i, x_{i+1})}{\prod_{j=0}^i(1+\lambda 
            d(x_j,x_{j+1}))},
        \end{align*}
        is continuous with respect to $\lambda$. Therefore, we deduce  
        \[v(x)=\lim_{\lambda\to 1^-}u_\lambda(x)\leq u_1(x)+\varepsilon.\]
        Since $\varepsilon>0$ is arbitrary, we get that $v(x)\leq u_1(x)$.
        The proof is now complete.
    \end{proof}
\section{Applications}\label{sec: appli}

This section is devoted to applying our previous results and techniques. 
In the first subsection, similarly to \cite{E-B_2023}, we consider an approximation scheme for the solution of the global slope equation~\eqref{eqn: G0} and also a solution of the local slope equation, motivated by the shape of the Perron solution of equation \eqref{eqn: G0}. 
Additional structure on the metric space is required to carry out this approximation. 
In the second subsection, we deal with the case in which the data $\ell$ admits the value $+\infty$. 
To the best of our knowledge, as a consequence of our analysis, we obtain a new integration formula that allows us to recover, up to a constant, lsc functions which are bounded from below in terms of their global slopes.

\subsection{An approximation scheme}\label{subsec: local-global}

We have already shown that only mild conditions on the data $\ell$ are required to construct a solution of equation~\eqref{eqn: Gs}.
However, this is not the case if we replace the global slope operator $G[\cdot]$ with the local slope operator $s[\cdot]$. 
Indeed, the classical Eikonal equation is considered whenever $X$ is an open subset of $\textstyle\R^d$, and further, its purely metric version usually requires to work in complete length spaces.
To fix ideas, let $X$ be a complete length space and let $\ell:X\to[0, + \infty)$ be a continuous function such that $[\ell=0]$ is nonempty.
Under these assumptions, it is standard that the local slope equation 
\begin{equation}\tag{$\EuScript{L}$}\label{eqn: Ls}
    \begin{dcases}
    s[V](x)=\ell(x),&~\text{for all }x\in X,\\
     V(x)=0,&~\text{for all }x\in [\ell=0],\\
    \end{dcases}
\end{equation}
admits a solution $V:X\to\R$ given by
\begin{equation}\label{eq: solution local}
    V(x):=\inf\left\{ \int_0^T \ell(\gamma(t))dt\right\},~\text{for all }x\in X.
\end{equation}
where the infimum is taken over all the $1$-Lipschitz curves starting at $x$ (i.e. $\gamma(0)=x$) and landing on $[\ell=0]$ (i.e. $\gamma(T)\in [\ell=0]$) (see \cite[Theorem 4.2]{GHN_2015} e.g.). 
A complete discussion about the uniqueness issue of equation~\eqref{eqn: Ls} can be found in~\cite{DLS_2023, DMS_2022}. 
In the forthcoming analysis, we use the following result that summarizes \cite[Theorem 4.2]{GHN_2015} and \cite[Corollary 2.5]{DS_2022}.

\begin{theorem}\label{thm: solution eikonal}
    Let $X$ be a complete length space. 
    Let $\ell: X\to [0, + \infty)$ be a continuous function such that $[\ell = 0]$ is nonempty.
    Then, the function defined by~\eqref{eq: solution local} is a solution of equation~\eqref{eqn: Ls}. Moreover, if $X$ is compact, equation~\eqref{eqn: Ls} admits only one solution.
\end{theorem}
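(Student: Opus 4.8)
The statement collects two known facts, so the plan is to recall the classical arguments and point to the cited references. Concretely, I would prove (a) that the value function $V$ of \eqref{eq: solution local} solves \eqref{eqn: Ls}, via dynamic programming, and (b) that this solution is unique when $X$ is compact, via a comparison principle (this is \cite[Theorem 4.2]{GHN_2015} and \cite[Corollary 2.5]{DS_2022}, respectively). First I would check that $V$ is well defined and real-valued: since $[\ell=0]\neq\emptyset$ and $X$ is a length space, every $x\in X$ is joined to a point of $[\ell=0]$ by a rectifiable curve, whose arc-length reparametrisation is an admissible $1$-Lipschitz curve; as $\ell$ is continuous it is bounded on the (compact) image of that curve, so $0\le V(x)<+\infty$. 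Taking the constant curve shows $V\equiv 0$ on $[\ell=0]$, which is exactly the Dirichlet condition in \eqref{eqn: Ls}. (If $X$ is a single point the whole statement is trivial; otherwise $X$ is path-connected, so no point is isolated and the $\limsup$ defining $s[V]$ is the relevant one.)

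For the inequality $s[V]\le\ell$, I would first establish the dynamic programming inequality
\[
V(x)\le \int_0^s \ell(\gamma(t))\,dt + V(\gamma(s))
\]
for every $1$-Lipschitz curve $\gamma$ with $\gamma(0)=x$, by concatenating $\gamma|_{[0,s]}$ with a near-optimal admissible curve issued from $\gamma(s)$ and letting the optimality gap go to $0$. Now fix $x$ and $\epsilon>0$; by continuity of $\ell$ there is $r>0$ with $\ell\le\ell(x)+\epsilon$ on $\overline{B_r(x)}$. For $y$ close enough to $x$, the length structure provides a $1$-Lipschitz curve from $x$ to $y$ of length at most $d(x,y)+\delta$, which therefore stays inside $B_r(x)$; the dynamic programming inequality then gives $V(x)-V(y)\le(d(x,y)+\delta)(\ell(x)+\epsilon)$, and letting $\delta\to 0$, then $y\to x$, then $\epsilon\to 0$, yields $s[V](x)\le\ell(x)$.

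The inequality $s[V]\ge\ell$ is the step I expect to be the main obstacle. If $\ell(x)=0$ there is nothing to prove, so assume $\ell(x)>0$; pick $\epsilon\in(0,\ell(x))$ and $r_0>0$ with $\ell\ge\ell(x)-\epsilon>0$ on $\overline{B_{r_0}(x)}$, so that $[\ell=0]$ is disjoint from $\overline{B_{r_0}(x)}$. Given $\rho\in(0,r_0)$ and an optimality gap $\eta>0$, choose an $\eta$-optimal admissible curve $\gamma:[0,T]\to X$ starting at $x$; since $\gamma(T)\in[\ell=0]$, the continuous map $t\mapsto d(\gamma(t),x)$ attains the value $\rho$, for the first time at some $s_\rho\ge\rho$, and $\gamma([0,s_\rho])\subset\overline{B_{r_0}(x)}$. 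Applying sub-optimality to the tail $\gamma|_{[s_\rho,T]}$ gives
\[
V(x)-V(\gamma(s_\rho))\ \ge\ \int_0^{s_\rho}\ell(\gamma(t))\,dt-\eta\ \ge\ (\ell(x)-\epsilon)\,\rho-\eta ,
\]
hence $(V(x)-V(\gamma(s_\rho)))_+/d(x,\gamma(s_\rho))\ge\ell(x)-\epsilon-\eta/\rho$. Choosing $\eta=\rho^2$ and letting $\rho\to 0$, so that $\gamma(s_\rho)\to x$, and then $\epsilon\to 0$, yields $s[V](x)\ge\ell(x)$. The delicate point is precisely this coupling of the ball radius $\rho$ with the optimality gap $\eta$: the error $\eta/\rho$ must become negligible while the competitor point $\gamma(s_\rho)$ still converges to $x$.

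For the uniqueness assertion when $X$ is compact I would invoke \cite[Corollary 2.5]{DS_2022}; the underlying mechanism is a comparison principle. On the one hand, every solution $W$ of \eqref{eqn: Ls} satisfies $W\le V$, because the local slope is an upper gradient: along any admissible curve $\gamma$ one has $W(x)=W(x)-W(\gamma(T))\le\int_0^T s[W](\gamma(t))\,dt=\int_0^T\ell(\gamma(t))\,dt$, and taking the infimum over $\gamma$ gives $W\le V$. On the other hand, the reverse inequality $W\ge V$ is obtained using compactness by following a steepest-descent curve for $W$ from an arbitrary point: along it $W$ decreases at rate essentially $\ell$ — here one uses the full equality $s[W]=\ell$, not just $s[W]\le\ell$ — so the curve reaches $[\ell=0]$, where $W=0$, and the corresponding cost bounds $W(x)$ below by $V(x)$. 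Hence $W=V$, which finishes the argument.
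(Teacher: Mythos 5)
This theorem is not proved in the paper at all: it is stated explicitly as a summary of \cite[Theorem 4.2]{GHN_2015} and \cite[Corollary 2.5]{DS_2022}, so there is no in-paper argument to compare against. Your sketch essentially reconstructs the standard proofs behind those citations, and the existence half is sound: the dynamic programming inequality gives $s[V]\le\ell$ once you note that a curve of length at most $d(x,y)+\delta$ issued from $x$ stays in $\overline{B_r(x)}$, and your coupling $\eta=\rho^2$ in the reverse inequality is exactly the right device --- the competitor $\gamma(s_\rho)$ satisfies $d(x,\gamma(s_\rho))=\rho\to 0$ while the error $\eta/\rho=\rho$ vanishes, so $s[V](x)\ge\ell(x)-\epsilon$ follows. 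Two points deserve more care. First, in the comparison $W\le V$ you integrate $s[W]$ along a curve; the pointwise identity $s[W]=\ell$ alone does not make $W$ continuous (on $\R$, $W(x)=x+\mathbbm{1}_{x>0}$ has $s[W]\equiv 1$), so you must fix the class of solutions (continuous, or at least locally Lipschitz, which is what the cited uniqueness results assume) before the one-sided upper-gradient inequality $W(\gamma(0))-W(\gamma(T))\le\int_0^T s[W](\gamma(t))\,dt$ is legitimate; for continuous $W$ with $s[W]\le\ell$ locally bounded this does hold via a Dini-derivative argument on $t\mapsto W(\gamma(t))$. Second, the step $W\ge V$ is where all the difficulty of the uniqueness statement lives: ``following a steepest-descent curve along which $W$ decreases at rate $\ell$'' is precisely what the determination theorems of \cite{DS_2022,DLS_2023} establish, via a nontrivial discrete descent or transfinite construction using compactness; as written it is an appeal to the reference rather than a proof, which is acceptable here since the paper itself does no more, but you should not present it as a routine computation.
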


Let us now define the $R$-semiglobal slopes $G_R$ (with $R>0$), which formally interpolate the local slope and the global slope. 
The semiglobal slopes were introduced in~\cite[Proposition 3.7]{DMS_2022} as examples of what the authors called abstract descent modulus.
For a fixed $R > 0$ and $u: X \rightarrow \R$, the $R$-\textit{semiglobal slope} of $u$ at $x \in X$ is defined by
    \begin{equation}
        G_R[u](x) := 
        \begin{dcases}
            \sup_{y \in B_R(x) \setminus \{x \}} \dfrac{(u(x) - u(y))_+}{d(x, y)}, & \text{ if } B_R(x)\setminus \{x \} \neq \emptyset, \\
            0, & \text{ otherwise}.
        \end{dcases}        
    \end{equation}
Recall that $B_R(x)$ denotes the open ball of center $x$ and radius $R$. Note that, for any $u:X\to \R$ and any $x\in X$ the following limits hold
\[ G[u](x)=\lim_{R\to\infty} G_R[u](x),~\text{and }s[u] (x)=\lim_{R\to 0} G_R[u](x).\]
Assume now that $[\ell=0]\neq \emptyset$.
Inspired by the $R$-semiglobal slope and the solution of equation~\eqref{eqn: G0}, let us formally define the function $v_R:X\to\R$ by 
\begin{align}\label{eqn: fn vR}
v_R(x)= \inf \left\{ \sum_{i = 0}^{k-1} \ell(x_i) d(x_i, x_{i + 1}): x_0 = x, x_{k} \in [\ell=0], \text{ and } d(x_i,x_{i+1})<R~\text{ for } i \in \{0, ..., k \} \right\}.
\end{align}
The above functions can be compared to the ones used in \cite[Section 1.1]{E-B_2023} in which the function $\ell$ is replaced by the so-called minimal weak upper gradients.  The following lemma implies that, if $X$ is connected, then $v_R$ is well defined. 

%Our goal is to study the relation between semiglobal slopes and the local slope (as $R$ tends to $0$) and the global slope (as $R$ tends to $\infty$). 
%For this reason, we only work with the equation without discount factor (i.e. $\lambda=0$): 
%\begin{equation}\tag{$\EuScript{SG}_R$}\label{eqn: GsR}
%    \begin{dcases}
%    G_R[v](x)=\ell(x),&~\text{for all }x\in X,\\
%    v(x)=0,&~\text{for all }x\in [\ell=0].
%    \end{dcases}
%\end{equation}

%To go further, we need the following technical lemma which will be used to build solutions of the \textit{semiglobal slope equation}~\eqref{eqn: GsR}. 

\begin{lemma}\label{lem.connect_G_R}
Suppose that $X$ is connected, $R > 0$ and $Y \subset X$ is a nonempty set. 
Let $\mathcal{A}$ be the set of points $x \in X$ such that there exists a finite sequence $\{ x_i \}_{i = 0}^k \subset X$ satisfying
    \begin{equation}\label{R-connect-seq}
        x_0 = x, \quad x_k \in Y\quad \text{and}\quad d(x_i, x_{i + 1}) < R \text{ for every $i \in \{ 0, \cdots, k - 1 \}$ } .
    \end{equation}
Then, $\mathcal{A} = X$. 
\end{lemma}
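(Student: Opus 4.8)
The plan is to prove that $\mathcal{A}$ is a nonempty, open, and closed subset of $X$, and then invoke the connectedness of $X$ to conclude $\mathcal{A} = X$. This is the classical ``clopen'' argument for connectedness, and the $R$-chain structure of $\mathcal{A}$ is exactly what makes it compatible with the open balls $B_R(x)$.

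\textbf{Nonemptiness.} First I would observe that $Y \subset \mathcal{A}$: given $x \in Y$, the trivial sequence $\{ x_i \}_{i = 0}^{0} = \{ x \}$ (i.e.\ $k = 0$) satisfies $x_0 = x$, $x_k = x \in Y$, and the condition on consecutive distances is vacuous. Since $Y \neq \emptyset$ by hypothesis, $\mathcal{A} \neq \emptyset$.

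\textbf{Openness and closedness.} Next I would show that for every $x \in X$, membership of $x$ in $\mathcal{A}$ is determined by (and determines) the behavior of the whole ball $B_R(x)$. Precisely: if some point $y \in B_R(x)$ belongs to $\mathcal{A}$, witnessed by a sequence $\{ y_i \}_{i = 0}^{k}$ as in \eqref{R-connect-seq} with $y_0 = y$, then prepending $x$ yields the sequence $x, y_0, y_1, \dots, y_k$, which still has all consecutive distances $< R$ because $d(x, y) < R$ (the ball is open), and lands in $Y$; hence $x \in \mathcal{A}$. Conversely, if $x \in \mathcal{A}$ via a sequence $\{ x_i \}_{i = 0}^{k}$ with $x_0 = x$, then for any $y \in B_R(x)$ the sequence $y, x_0, x_1, \dots, x_k$ witnesses $y \in \mathcal{A}$; hence $B_R(x) \subset \mathcal{A}$. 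The first implication shows that $X \setminus \mathcal{A}$ is open (if $x \notin \mathcal{A}$ then $B_R(x) \cap \mathcal{A} = \emptyset$), i.e.\ $\mathcal{A}$ is closed; the second shows $\mathcal{A}$ is open.

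\textbf{Conclusion and main difficulty.} Since $X$ is connected and $\mathcal{A}$ is a nonempty clopen subset, $\mathcal{A} = X$. I do not expect any genuine obstacle here; the only point requiring a little care is making sure the strict inequality $d(x, y) < R$ for $y \in B_R(x)$ matches the strict inequality $d(x_i, x_{i+1}) < R$ in the definition of the chains, and correctly treating the degenerate case $k = 0$ so that $Y$ itself lies in $\mathcal{A}$.
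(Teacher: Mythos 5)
Your proof is correct and follows essentially the same route as the paper: the paper also observes that $B_R(x)\subset\mathcal{A}$ for every $x\in\mathcal{A}$ (which gives both openness and, by the contrapositive you spell out, closedness) and then concludes by connectedness using $Y\subset\mathcal{A}$. Your version merely makes explicit the prepending of points to the chain and the degenerate case $k=0$, which the paper leaves implicit.
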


\begin{proof}
To check that $\mathcal{A}$ is open, note that $B_{R}(x) \subset \mathcal{A}$ for every $x \in \mathcal{A}$.
This also shows that $\mathcal{A}$ is closed.
Now, since $X$ is connected and $Y\subset \mathcal{A}$, we conclude that $\mathcal{A} = X$. 
\end{proof}

The functions $v_R$ enjoy the following property.
\begin{proposition}\label{prop: solution semiglobal}
    Let $X$ be a connected metric space, $R>0$ and $\ell:X \to [0, + \infty)$ be a lsc function such that $[\ell = 0]$ is nonempty. 
    Then, $G_R[v_R]\leq \ell$. 
\end{proposition}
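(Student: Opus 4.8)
The plan is to show directly that $G_R[v_R](x) \le \ell(x)$ for every $x \in X$, by estimating the quotient $(v_R(x) - v_R(y))_+/d(x,y)$ for an arbitrary $y \in B_R(x) \setminus \{x\}$. The key observation is that a single step from $x$ to $y$ of length $<R$ can be prepended to any admissible path starting at $y$, producing an admissible path starting at $x$; this is the discrete analogue of the classical "concatenation of curves" argument that underlies dynamic programming for the Eikonal equation.

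First I would fix $x \in X$. By Lemma~\ref{lem.connect_G_R} (applied with $Y = [\ell = 0]$, which is nonempty), the admissible families of finite sequences defining $v_R(x)$ and $v_R(y)$ are nonempty, so $v_R(x), v_R(y) \in [0,+\infty)$ are well defined. If $B_R(x) \setminus \{x\} = \emptyset$, then $G_R[v_R](x) = 0 \le \ell(x)$ and there is nothing to prove, so assume $B_R(x) \setminus \{x\} \neq \emptyset$ and fix $y \in B_R(x)$ with $y \neq x$. Let $\varepsilon > 0$ and pick an admissible sequence $\{y_i\}_{i=0}^{k}$ with $y_0 = y$, $y_k \in [\ell = 0]$, $d(y_i, y_{i+1}) < R$ for all $i$, and
\[
    \sum_{i=0}^{k-1} \ell(y_i) d(y_i, y_{i+1}) < v_R(y) + \varepsilon.
\]
Now prepend $x$: set $x_0 = x$ and $x_{i+1} = y_i$ for $0 \le i \le k$. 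Since $d(x_0, x_1) = d(x, y) < R$ and the remaining steps are unchanged, $\{x_i\}_{i=0}^{k+1}$ is admissible for $v_R(x)$. Hence, by the definition of $v_R(x)$ as an infimum,
\[
    v_R(x) \le \ell(x) d(x, y) + \sum_{i=0}^{k-1} \ell(y_i) d(y_i, y_{i+1}) < \ell(x) d(x,y) + v_R(y) + \varepsilon.
\]
Letting $\varepsilon \to 0$ gives $v_R(x) - v_R(y) \le \ell(x) d(x, y)$, so $(v_R(x) - v_R(y))_+ / d(x,y) \le \ell(x)$. Taking the supremum over $y \in B_R(x) \setminus \{x\}$ yields $G_R[v_R](x) \le \ell(x)$.

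I do not expect a serious obstacle here; the only subtlety is making sure $v_R$ is genuinely finite-valued (so the subtraction makes sense), which is exactly what connectedness via Lemma~\ref{lem.connect_G_R} provides, and checking that prepending one short step preserves admissibility — both routine. It may be worth remarking that the reverse inequality $G_R[v_R] \ge \ell$ (and hence that $v_R$ solves the $R$-semiglobal analogue of the Eikonal equation) is \emph{not} claimed here and would require the kind of supersolution argument used in the proof of Theorem~\ref{thm: existence}, Claim~2; the present statement only needs the easy subsolution direction.
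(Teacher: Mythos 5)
Your argument is correct and matches the paper's proof: both use Lemma~\ref{lem.connect_G_R} for well-definedness and then the dynamic programming inequality $v_R(x)\leq \ell(x)d(x,y)+v_R(y)$ obtained by prepending the step from $x$ to $y$ to an admissible path starting at $y$. The paper states this inequality directly from the definition of the infimum, while you spell out the $\varepsilon$-selection of a near-optimal path; the content is identical.
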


\begin{proof}
Thanks to Lemma~\ref{lem.connect_G_R}, the function $v_R$ is well defined.
Let $x\in X$ and $y\in B(x,R)\setminus\{x\}$. Observe that immediately from the definition of $v(x)$ we have
\begin{align*}
    v(x)&= \inf \left\{ \sum_{i = 0}^{k-1} \ell(x_i) d(x_i, x_{i + 1}): x_0 = x, x_{k} \in [\ell=0], \text{ and } d(x_i,x_{i+1})<R~\text{ for } i \in \{0, ..., k \} \right\}\\
    &\leq \ell(x)d(x,y)+ v(y). 
\end{align*}
Since the above inequality holds true for any $y\in B(x,R)\setminus\{x\}$, we deduce that $G_R[v_R](x)\leq \ell(x)$.
\end{proof}
\begin{remark}\normalfont
    The functions $v_R$ may have their own interest, but their study is out of the scope of this paper. 
    For instance, a natural question arises: under which metric conditions of $X$ (if any) does the function $v_R$ satisfy ${G_R[v_R]=\ell}$? 
\end{remark}
In what follows, we are interested in the following natural question
\begin{center}
        \textit{What are the asymptotic behaviors of the family $\{ v_R \}_{R > 0}$ as $R$ tends to $+\infty$ and tends to $0$?}
\end{center}

In Theorem \ref{thm: local and global} we provide necessary conditions on the metric space $X$ to obtain the convergence of the family $\{v_R\}_R$ to the Perron solution of~\eqref{eqn: G0} and to the function defined by~\eqref{eq: solution local} (the solution of~\eqref{eqn: Ls}) as $R$ tends to $+ \infty$ and to $0$ respectively. 
Recall that if $[\ell=0]$ is nonempty, then the assumption~\eqref{eq: hyp} is trivially satisfied. 
\begin{theorem}\label{thm: local and global}
Assume that $\ell: X \rightarrow [0, + \infty)$ is a lsc function such that $[\ell = 0 ] \neq \emptyset$. For each ${R > 0}$, we denote by $v_R$ the function defined by~\eqref{eqn: fn vR}. 
Denote by $u_0: X \to \R$ and $V: X \to \R$ the Perron solution of \eqref{eqn: G0} and the solution of \eqref{eqn: Ls} given by \eqref{eq: solution local} both associated with $\ell$. 
Then, the following statements hold true. 
    \begin{itemize}
        \item[$(i)$] If $X$ is connected, then $v_R$ converges pointwise to $u_0$ and $R$ tends to $+\infty$.
         \item[$(ii)$] If $X$ is a complete length space and $\ell$ is continuous, then $\liminf_{R \to 0^+} v_R \leq V$ pointwise.
        \item[$(iii)$] If $X$ is a compact length space (and therefore geodesic) and $\ell$ is continuous, then $v_R$ converges uniformly to $V$ as $R$ tends to $0$. 
    \end{itemize}
\end{theorem}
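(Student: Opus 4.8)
The starting point is the monotonicity of $R\mapsto v_R$: since every $R_1$-admissible chain is $R_2$-admissible when $R_1<R_2$, the family $\{v_R\}$ is non-increasing in $R$, and (because $X$ is connected in all three cases) each $v_R$ is finite by Lemma~\ref{lem.connect_G_R}. Hence the pointwise limits $v_\infty:=\lim_{R\to+\infty}v_R$ and $v_{0^+}:=\lim_{R\to 0^+}v_R$ exist, and $(i)$ reduces to $v_\infty=u_0$, while $(iii)$ will follow from $v_{0^+}=V$ together with uniform convergence. For $(i)$, the bound $u_0\le v_R$ (hence $u_0\le v_\infty$) is seen by extending a finite chain $x_0=x,\dots,x_k\in[\ell=0]$ with the constant tail $x_n:=x_k$ ($n>k$), which lies in $\mathcal K(x)$ and has the same cost. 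Conversely, I would show $v_\infty$ is a subsolution of~\eqref{eqn: G0}: it is non-negative, vanishes on the nonempty set $[\ell=0]$ so $\inf_X v_\infty=0$, and for $x\ne y$ and $R>d(x,y)$, prepending $x$ to a near-optimal $R$-chain from $y$ gives $v_R(x)\le\ell(x)d(x,y)+v_R(y)$, so $G[v_\infty]\le\ell$ after $R\to+\infty$. By maximality of the Perron solution (Corollary~\ref{cor: Perron}), $v_\infty\le u_0$, proving $(i)$.

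For $(ii)$, fix $x$ with $V(x)<+\infty$ and a $1$-Lipschitz curve $\sigma:[0,T]\to X$ from $x$ to $[\ell=0]$ with $\int_0^T\ell(\sigma)\,dt<V(x)+\varepsilon$. Sampling $\sigma$ along a partition of mesh $<R$ yields an $R$-admissible chain ending in $[\ell=0]$, so $v_R(x)$ is bounded by the corresponding Riemann sum of the continuous function $\ell\circ\sigma$; letting the mesh $\to 0$ gives $\limsup_{R\to 0^+}v_R(x)\le V(x)+2\varepsilon$, hence $\limsup_{R\to 0^+}v_R(x)\le V(x)$, which is stronger than $(ii)$ and yields $v_{0^+}\le V$. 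For $(iii)$, a compact length space is geodesic (Hopf--Rinow). Chaining the inequality $G_R[v_R]\le\ell$ (Proposition~\ref{prop: solution semiglobal}) along a geodesic subdivided into $R$-steps gives $|v_R(x)-v_R(y)|\le(\sup_X\ell)\,d(x,y)$ for all $x,y$, so $\{v_R\}$ is equi-Lipschitz; subdividing a geodesic from $x$ to a nearest point of $[\ell=0]$ gives $0\le v_R\le(\sup_X\ell)\,\mathrm{diam}(X)$. Thus $v_{0^+}$ is Lipschitz, and since $v_R$ increases to it through continuous functions on the compact $X$, Dini's theorem upgrades the convergence to uniform. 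It remains to establish $v_{0^+}\ge V$.

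This last inequality is the main obstacle: a near-optimal $R$-chain may, a priori, travel a long distance through regions where $\ell$ is small, which would wreck a direct comparison with curves. To control this, fix $x$, $\varepsilon\in(0,1)$, $r_0>0$, a near-optimal $R$-chain $x_0=x,\dots,x_k\in[\ell=0]$, and let $\tau$ be the first index with $d(x_\tau,[\ell=0])<r_0$. For $i<\tau$ one has $d(x_i,[\ell=0])\ge r_0$, hence $\ell(x_i)\ge\mu(r_0):=\min\{\ell(y):d(y,[\ell=0])\ge r_0\}>0$, a positive constant independent of $R$ by compactness and continuity; consequently $\sum_{i<\tau}d(x_i,x_{i+1})\le\mu(r_0)^{-1}\big((\sup_X\ell)\mathrm{diam}(X)+1\big)$. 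Now replace the part of the chain after $x_\tau$ by a geodesic from $x_\tau$ to a nearest point of $[\ell=0]$, subdivided into $R$-steps: this is a valid $R$-chain from $x$ to $[\ell=0]$ whose length is bounded by some $\Lambda(r_0)$ independent of $R$ and whose cost exceeds that of the original chain by at most $\omega(2r_0)\,r_0$, where $\omega$ is a modulus of continuity of $\ell$ and the new points lie within $2r_0$ of $[\ell=0]$. Filling the modified chain in with geodesics produces a $1$-Lipschitz curve $\gamma$ from $x$ to $[\ell=0]$ along which, since on each $R$-step the geodesic stays within $R$ of its left endpoint, $\int_\gamma\ell$ exceeds the cost of the modified chain by at most $\omega(R)\Lambda(r_0)$. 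Altogether $V(x)\le\int_\gamma\ell\le v_R(x)+\varepsilon+\omega(2r_0)r_0+\omega(R)\Lambda(r_0)$; letting $R\to 0^+$, then $\varepsilon\to 0$, then $r_0\to 0$ (in this order) gives $V(x)\le v_{0^+}(x)$. Hence $v_{0^+}=V$, and combined with the uniform convergence above, this proves $(iii)$.
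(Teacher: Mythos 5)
Your proposal is correct and follows essentially the same route as the paper: monotonicity of $R\mapsto v_R$, the subsolution property of $v_\infty$ via Perron maximality for $(i)$, Riemann-sum sampling of near-optimal curves for $(ii)$, and for $(iii)$ the same truncation of a near-optimal $R$-chain at its first entry into a small neighborhood of $[\ell=0]$, with the length of the initial portion controlled by the positive lower bound of $\ell$ away from that neighborhood and the comparison to $\int_\gamma\ell$ controlled by the modulus of continuity. The one place you go beyond the paper is in making the uniform convergence in $(iii)$ explicit via the equi-Lipschitz bound $|v_R(x)-v_R(y)|\le\|\ell\|_\infty d(x,y)$ and Dini's theorem, a step the paper leaves implicit.
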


\begin{proof}
First, observe that by the definition of infimum, for any $0<R_1<R_2$, we have that ${v_{R_1}\geq v_{R_2}}$. 
So, the following limits are well defined:
\begin{align*}
    v_{\infty}(x) &:= \lim_{R\to\infty} v_R(x) =\inf\left\{v_R(x):~R>0 \right\},\\
    v_0(x) &:= \lim_{R\to 0} v_R(x) =\sup\left\{v_R(x):~R>0\right\}, \quad \text{ for every } x \in X.
\end{align*}

$(i).$ 
Let us recall that the function $u:X\to\R\cup\{+\infty\}$ defined by $u = 0$ on $S$ and $u = +\infty$ on $X \setminus S$ is a supersolution of~\eqref{eqn: G0}. Moreover, since $u$ is larger than any subsolution, Theorem~\ref{thm: T characterization} implies that $T^\infty u=u_0$. 
So, a direct computation gives that, for any $x\in X$
\[ u_0 (x)=\inf \left\{\sum_{i=0}^{k-1} \ell(x_i)d(x_i,x_{i+1}):~ k \in \N, \, \{x_i\}_{i=0}^k\subset X,~x_0=x,~x_k\in [\ell=0] \right\}.\]
Then, it readily follows that $u_0(x) \leq v_{R}(x)$ for all $x \in X$ and $R > 0$. 
Hence, it holds $u_0 \leq v_\infty$.

\medskip 

We show that $v_\infty \leq u_0$. To do this, we check that $v_\infty$ is a subsolution of~\eqref{eqn: G0}. 
Reasoning towards a contradiction, assume that there exist $\epsilon>0$ and $\bar x \in X$ such that $G[v_\infty](\bar x) > \ell(\bar x) + \epsilon$. 
By definition of the global slope, there is $y\in X\setminus \{\bar x\}$ such that
    \begin{equation}\label{Contra.G}
        v_\infty(\bar x) > v_\infty(y) + (\ell(\bar x) + \epsilon) d(\bar x, y).
    \end{equation}
Noticing that $v_R(y) \xrightarrow{R \rightarrow \infty} v_\infty(y)$, we fix $R > 0$ such that 
    \begin{align*}
       v_\infty(y) + \epsilon d(\bar x, y) > v_R(y) \text{ and } R \geq d(\bar x, y).
    \end{align*}
Then, from inequality~\eqref{Contra.G}, we get $        v_R(\bar x)\geq v_\infty(\bar x) > v_R(y) + \ell(\bar x) d(\bar x, y).$
Consequently, $G_R[v_R](\bar x) > \ell(\bar x)$, which contradicts Proposition~\ref{prop: solution semiglobal}.\\

$(ii).$ 
Fix $x \in X$ and $ R > 0$. Let $\epsilon > 0$ and fix a 1--Lipschitz curve $\gamma_{\epsilon}: [0, T] \to  X$ such that
    \begin{equation}\label{V_int_gamma.02}
        V(x) \geq \int_0^{T} \ell(\gamma_\epsilon(t))dt - \epsilon,
    \end{equation}
    $\gamma(0)=x$ and $\gamma(T)\in [\ell=0]$.
Since $\ell$ is continuous, $\ell\circ \gamma_\epsilon$ is Riemann integrable. 
Hence, there exists a partition $P := \{ 0 = t_0 < t_1 < \cdots < t_k = T \}$ of $[0, T]$ such that $|t_{i + 1} - t_i| < R$ for all $i \in \{0, \cdots, k - 1\}$ and 
    \begin{equation}\label{V_int_gamma.03}
        \int_0^{T} \ell( \gamma_{\epsilon} (t))dt \geq \sum_{i=0}^{k-1} \ell(\gamma_{\epsilon}(t_i))|t_{i + 1} - t_i| - \epsilon.
    \end{equation}
Set $x_i = \gamma_{\epsilon}(t_i)$ for $i \in \{0, \cdots, k \}$. Since $\gamma_{\epsilon}$ is 1--Lipschitz, one gets 
    \begin{equation}\label{V_int_gamma.04}
        d(x_{i + 1}, x_i) = d(\gamma_{\epsilon}(t_{i + 1}), \gamma_{\epsilon}(t_i)) \leq |t_{i + 1} - t_i| < R.
    \end{equation}
Combining the inequalities \eqref{V_int_gamma.02}, \eqref{V_int_gamma.03} and \eqref{V_int_gamma.04} and using the definition of $v_R$, we obtain
    \begin{equation}
        V(x) \geq \sum_{i=0}^{k-1}  \ell(x_i) d(x_{i + 1}, x_i) - 2 \epsilon \geq v_R(x) - 2\epsilon.
    \end{equation}
Sending $\epsilon$ to $0$, we get that $V(x) \geq v_R(x)$. 
Since $R>0$ is arbitrary, we conclude
    \begin{equation}
        V(x) \geq \lim_{R \rightarrow 0} v_R(x) = \sup_{R > 0} v_R(x)=v_0(x).
    \end{equation}
    So, $V\geq v_0$.
%Under the assumption $X$ a compact length space, equation~\eqref{eqn: Ls} has a unique solution determined by \eqref{eq: solution local}. 
%Observe that, for any $R>0$
%    \begin{align*}
%        s_{u_R} \leq G_R[u_R] = \ell.
%    \end{align*}
%This means that $u_R$ is a subsolution of the equations \eqref{eqn: Ls}. 
%Thanks to comparison principle \cite{??}, we infer that
%    \begin{equation}
%         V \geq u_R, \text{ for all } 0 \leq R.
%    \end{equation}
%We immediately obtain that $u_0 \leq V$ in $X$. \\

$(iii).$ Thanks to part $(ii)$, it remains to show that $V\leq v_0$. Let $x\in X$. If $\ell(x)=0$, then $v_R(x)=0$ for any $R>0$. So $v_0(x)= 0=V(x)$.
Assume that $\ell(x)>0$.
Let $\rho>0$ be such that $\textup{dist}(x,[\ell=0])>\rho$. Consider the open set 
\[\Omega_\rho:=\bigcup_{y\in [\ell=0]}B_{\rho}(y).\]
Note that $x\notin \Omega_\rho$.
By compactness of $X$ and continuity of $\ell$, we know

\[L_\rho := \inf\left\{\ell(y):~ y\in X\setminus \Omega_{\rho} \right\} > 0.\]
Fix $R>0$. Let $\{x_i\}_{i=0}^k\subset X$ be such that $x_0=x$, $x_k\in [\ell=0]$, $d(x_i,x_{i+1})<R$ for all $i$, and 
\begin{align}\label{eq: uR approx}
v_R(x)+1\geq \sum_{i=1}^{k-1} \ell(x_i)d(x_i,x_{i+1}).
\end{align}
Define
\begin{align}\label{eq: I}
    I:= \{i\in \{0,...,k\}:~ x_i\in X\setminus \Omega_\rho\}.    
\end{align}
It follows that $0\in I$ and $k\notin I$.
Thanks to inequality~\eqref{eq: uR approx} and the definition of $L_\rho$, we get
\begin{align}\label{eq: bounded length 2}
    \dfrac{V(x)+1}{L_\rho}\geq \sum_{i\in I}d(x_i,x_{i+1}).
\end{align}
Observe that above estimate is independent of $R$.
For any $n\in \N$, let $\{x_i^n\}_{i=1}^{k_n}\subset X$ be such that $x_1^n=0$, $x_{k_n}^n\in [\ell=0]$, $d(x_i^n,x_{i+1}^n)<1/n$ for all $i$ and
\begin{align} \label{eq: approx un}
v_{\frac{1}{n}}(x)+\dfrac{1}{n}\geq \sum_{i=1}^{k_n-1}\ell(x_i^n)d(x_i^n,x_{i+1}^n).\end{align}
Let $I_n\subset \{0,...,k_n\}$ be the set defined as~\eqref{eq: I} but using the sequence $\{x_i^n\}_{i=0}^{k_n}$.
Set $I_n^0$ as the largest interval of integers contained in $I_n$ such that $0\in I_n^0$.
Also, define
\[a_n:=\max I_n^0+1 \in I_n\setminus I_n^0.\]
By definition, $x_{a_n}\in \Omega_\rho$.
By compactness of $X$ and continuity of $\ell$, we have 
\[C :=\max\{\ell(x):x\in \overline{\Omega}_\rho\}<+\infty.\]
Since $[\ell=0]$ is compact, there is $y_n\in [\ell=0]$ such that
\[\textup{dist}(x_{a_n},[\ell=0])=d(x_{a_n},y_n)<\rho.\]
Denote by $\gamma_{a_n}^n$ the geodesic (of speed $1$) joining $x_{a_n}$ and $y_n$.
Then,
\begin{align}\label{eq: geodesic an}
    \int_0^{d(x_{a_n},y_n)}\ell(\gamma_{a_n}^n(t))dt\leq Cd(x_{a_n},y_n)\leq C\rho.
\end{align}

For any $i\in I_n^0$, let $\gamma_i^n:[0,d(x_i^n,x_{i+1}^n)] \rightarrow X$ be a geodesic (of speed $1$) joining $x_i^n$ and $x_{i+1}^n$.
Also, we denote by $\gamma^n: [0, T_n] \to X$ the curve obtained by concatenating the curves $\{\gamma_i^n\}_{i=0}^{a_n}$.
Observe that 
\[\textup{Lip}(\gamma^n)\leq 1,\quad \gamma^n(0)=x \quad ~\text{and } \quad \gamma^n(T_n)=y_n\in[\ell=0],\]
% \\
where $T_n = \sum_{i = 0}^{a_{n}- 1} d(x_i^n, x_{i + 1}^n) + d(x_{a_n}, y_n)$. 
Denote by $\omega:[0,+\infty)\to[0,+\infty]$ the modulus of continuity of $\ell$, i.e. $\omega$ is the increasing function defined by
\[\omega(r) :=\sup\left\{|\ell(z_1)-\ell(z_2)|:~z_1,z_2\in X,~d(z_1,z_2)\leq r \right\}.\]
Since $X$ is compact and $\ell$ is continuous, $\omega(r)<\infty$ for all $r\geq 0$ and $\textstyle\lim_{r\to 0}\omega(r)=0$.
So, for any $n\in \N$ and $i\in I_n^0$ we have
\begin{equation}\label{eq: geodesic xn}
    \begin{split}
        & ~\left|\ell(x_i^n)d(x_i^n,x_{i+1}^n)-\int_0^{d(x_i^n,x_{i+1}^n)}\ell(\gamma_i^n(t))dt\right| \\
        \leq & ~  \int_0^{d(x_i^n,x_{i+1}^n)}|\ell(\gamma_i^n(t))-\ell(x_i^n)|dt\leq \omega\left(n^{-1}\right) d(x_i^n,x_{i+1}^n).
    \end{split}
\end{equation}
Finally, putting all together we get that
\begin{align*}
    v_{\frac{1}{n}}(x)+\dfrac{1}{n}+C\rho&\geq \sum_{i\in I_n^0}\ell(x_i^n)d(x_i^n,x_{i+1}^n)+\int_0^{d(x_{a_n},y_n)} \ell(\gamma_{a_n}^n(t))dt\\
    &\geq \int_0^T\ell(\gamma(t))dt+\sum_{i\in I_n^0}\left (\ell(x_i^n)d(x_i^n,x_{i+1}^n)-\int_0^{d(x_i^n,x_{i+1}^n)}\ell(\gamma_i^n(t))dt\right )\\
    &\geq V(x) - \omega(n^{-1})\sum_{i\in I_n^0} d(x_i^n,x_{i+1}^n)\\
    &\geq V(x)- \omega(n^{-1})\dfrac{V(x)+1}{L_\rho},
\end{align*}
where in the first line we use~\eqref{eq: approx un} and~\eqref{eq: geodesic an}, in the third one~\eqref{eq: solution local} and~\eqref{eq: geodesic xn} and in the last line~\eqref{eq: bounded length 2}.
Sending $n$ to $+\infty$, we get
\[v_0(x)+C\rho\geq V(x).\]
Finally, sending $\rho$ to $0$ we obtain $v_0(x)\geq V(x)$. 
The proof is now complete.
\end{proof}
\begin{remark}\label{rem: above}\normalfont
Without significant changes of the above proof, it can be shown that $\textstyle\lim_{R\to 0}v_R=V$ whenever $X$ is a complete length space, $\ell:X\to[0, + \infty)$ is uniformly continuous, $[\ell=0] \neq \emptyset$ and the Hausdorff-Pompeiu distance $D_H([\ell=0],[\ell\leq \alpha] )$ tends to $0$ as $\alpha$ tends to $0^+$. 
\end{remark}
We end this section by showing that Theorem~\ref{thm: local and global}~$(iii)$ does not hold if the function $\ell$ is not assumed to be uniformly continuous. 
We denote again by $v_R$ the function defined by~\eqref{eqn: fn vR}. Also, as in Theorem~\ref{thm: local and global}, we denote by $v_0:X\to\R $ the function 
    \[
        v_0(x) := \lim_{R\to 0} v_R(x) =\sup\left\{v_R(x):~R>0\right\}, \text{ for every } x \in X.        
    \]
\begin{example}\label{thm: counterexample}
    There exist a complete bounded geodesic metric space $X$ and continuous bounded function ${\ell: X\to [0, + \infty)}$, with $[\ell=0] \neq \emptyset$, such that $v_0$ is not a solution of~\eqref{eqn: Ls}.
\end{example}
    Let $X$ be the set defined by
    \[X := \{0\}\cup\{1\}\cup \{(x,n):~x\in (0,1),~n\in \N\}\]
    and let $d:X\times X\to \R$ be the function defined by
    \[d(z_1,z_2)=\begin{cases}
        0 &~\textup{if }z_1=z_2,\\
        
        1 &~\text{if }z_1=0~\text{and } z_2=1,\\
        x &~\text{if }z_1=0~\text{and } z_2=(x,n),\\
        |x-y|&~\text{if }z_1=(x,n)~\text{and } z_2=(y,n),\\
        1-x&~\text{if }z_1=1~\text{and } z_2=(x,n),\\        
        \min \{x+y, 2-x-y\}&~\textup{if }z_1=(x,n),~z_2 = (y, m)~\text{and } n\neq m.\\
    \end{cases}\]
    All the unwritten cases are defined by symmetry. 
    Observe that $(X, d)$ is a metric space.
    For any $n\in \N$, let us denote by $S_n$ the segment
    \[S_n:=\{0\}\cup \{(x,n):x\in(0,1)\} \cup \{1\}.\]
    The metric space $X$ can be regarded as countably many segments joining $0$ and $1$. 
    Let us collect some simple facts of $X$: it is a bounded space with $\textup{diam}(X)=1$, complete and not compact (the sequence $\{(1/2,n) \}_n$ has no accumulation points). 
    Moreover, $X$ is a geodesic space.\\
    Let us fix a continuous function $\ell:X\to\R$ with the following properties: 
    \begin{enumerate}
        \item[(P1)]  $\ell(0)=0$, $\ell(1)=1$ and $\ell((x,n))=x,~ \text{for all }x\in\left(0,\dfrac{1}{3}\right) \cup \left(\dfrac{2}{3},1\right)\,\text{and all } n \in \N.$
        \item[(P2)] $\ell((x,n))\geq x$ for all $x\in (0,1)$ and all $n\in \N$. 
        \item[(P3)] $
    \displaystyle\int_0^1 \ell((x,n)) dx = 1,~\text{for all }n\in \N.$    
    \item[(P4)] The set $
        P_n:=\{z\in S_n:~z=(x,n)~\text{and }\ell(z)=x\}$ is a $\frac{1}{n}$-net of $S_n$.
    \end{enumerate}
    We can also choose $\ell$ bounded, for instance, by $3$. 
    Also, due to (P3) and (P4), $\ell$ cannot be uniformly continuous.
    In what follows, we present a simple consequence of \cite[Theorem~3.6]{DLS_2023}.
\begin{proposition}\label{prop: uniqueness local}
    Let $X$ and $\ell$ be the metric space and function defined above. Then the problem \eqref{eqn: Ls} has a unique non negative solution, namely $V:X\to\R$ defined by~\eqref{eq: solution local}.
\end{proposition}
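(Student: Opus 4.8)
The plan is to obtain existence from the theory already recalled and uniqueness from the determination result \cite[Theorem~3.6]{DLS_2023}. For existence, I would first record that $(X,d)$ is a complete length space (in fact geodesic), that $\ell$ is continuous and bounded (by $3$), and that $[\ell=0]=\{0\}\neq\emptyset$; hence Theorem~\ref{thm: solution eikonal} applies and the function $V$ from~\eqref{eq: solution local} is a solution of~\eqref{eqn: Ls}, and obviously $V\geq 0$. Observe that the uniqueness half of Theorem~\ref{thm: solution eikonal} is \emph{not} available here, because $X$ is not compact --- the sequence $\{(1/2,n)\}_n$ has no accumulation point --- which is precisely why a separate argument is needed and why the completeness of $X$ must be exploited instead.

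For uniqueness, let $u:X\to\R$ be an arbitrary nonnegative solution of~\eqref{eqn: Ls}. Since $s[u]=\ell$ is real-valued, $u$ is lsc, and it is bounded from below because $u\geq 0$. The first step is to identify the minimizers of $u$: from $u\geq 0$ and the boundary condition $u(0)=0$ one gets $\inf_X u=0=u(0)$, and if $u(x)=0$ then $x$ is a global minimizer of $u$, so $s[u](x)=0$, hence $\ell(x)=0$, hence $x=0$; thus $\mathrm{argmin}\,u=[\ell=0]=\{0\}$. The second step is to apply \cite[Theorem~3.6]{DLS_2023} to the local slope $s[\cdot]$ (a metric-compatible descent modulus) and to $u$: since $X$ is complete, $u$ is lsc and bounded below, $s[u]=\ell$ is continuous (hence locally bounded), and $\mathrm{argmin}\,u=[\ell=0]$, that result recovers $u$ from $s[u]$ and from $u|_{\mathrm{argmin}\,u}$, yielding $u(x)=\inf\{\int_0^T\ell(\gamma(t))\,dt\}$, the infimum being over all $1$--Lipschitz curves $\gamma$ with $\gamma(0)=x$ and $\gamma(T)\in[\ell=0]$. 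By~\eqref{eq: solution local} this is exactly $V(x)$, so $u=V$, and uniqueness follows.

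I expect the only genuinely delicate point to be verifying that the hypotheses of \cite[Theorem~3.6]{DLS_2023} are fulfilled in this non-compact ambient space. This reduces to the lower semicontinuity and local boundedness of $\ell$ together with $[\ell=0]\neq\emptyset$ (all immediate here) and to the finiteness of the value function $V$ on all of $X$: the latter holds because a unit-speed geodesic from any point $(x,n)$ down to $0$ stays inside the single arc $S_n$, so $V((x,n))=\int_0^x\ell((s,n))\,ds<+\infty$ (finite since $\ell$ is bounded), while $V(1)\leq\int_0^1\ell((s,n))\,ds=1$ by property (P3). Consequently the non-compactness of $X$ creates no obstruction, and the feature that $X$ \emph{does} possess --- completeness --- is exactly what \cite[Theorem~3.6]{DLS_2023} requires. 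The secondary check, carried out above, that $\mathrm{argmin}\,u$ coincides with $[\ell=0]$ is what lets the determination theorem pin $u$ down globally rather than only up to its behavior near the far endpoint $1$.
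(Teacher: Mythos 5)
Your existence step and your identification of $\mathrm{argmin}\,u=[\ell=0]=\{0\}$ are fine and match the paper's setup. The gap is in the application of \cite[Theorem~3.6]{DLS_2023}. That determination theorem, in a \emph{non-compact} complete metric space, does not follow merely from completeness, lower semicontinuity, boundedness from below, local boundedness of the slope and agreement on the argmin: it carries an additional hypothesis concerning the so-called $s$-\emph{asymptotically critical sequences} (sequences $\{x_n\}_n$ with no convergent subsequence and $\sum_n s[V](x_n)d(x_n,x_{n+1})<+\infty$; see \cite[Definition~3.1]{DLS_2023}), along which the two functions must also be compared. This is precisely the delicate point here, and it is the one your checklist omits: the example is engineered so that $X$ is not compact and sequences can escape along the infinitely many arcs $S_n$, so one must rule out (or control) such sequences before the determination theorem applies. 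The paper does exactly this: by \cite[Remark~3.2]{DLS_2023} any $s$-asymptotically critical sequence satisfies $\liminf_n s[V](x_n)=\liminf_n\ell(x_n)=0$, and property (P2) ($\ell((x,n))\geq x$) then forces a subsequence to converge to the point $0$, contradicting the absence of convergent subsequences; hence there are no $s$-asymptotically critical sequences and \cite[Theorem~3.6]{DLS_2023} yields $W=V$. Without this verification your invocation of the theorem is unjustified.

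A secondary, more cosmetic issue: a determination theorem does not ``yield'' the integral formula $u(x)=\inf\{\int_0^T\ell(\gamma(t))\,dt\}$; it only asserts that two functions with the same slope, the same values on the argmin (and matching behaviour along asymptotically critical sequences) coincide. The correct logic is the paper's: $V$ is one solution (by Theorem~\ref{thm: solution eikonal}), $W$ is any other nonnegative solution, both have slope $\ell$ and vanish at $0$, and after excluding asymptotically critical sequences the determination theorem gives $W=V$. Your finiteness check of $V$ is not needed for this argument, whereas the missing check on asymptotically critical sequences is essential.
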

\begin{proof}
    By Theorem~\ref{thm: solution eikonal}, $V$ is a solution of~\eqref{eq: solution local} associated with $\ell$.
    Let $W$ be any other positive solution. 
    That is, $s[V]=s[W]=\ell$ on $X$ and $V(0)=W(0)=0$.
    Let us recall the notion of asymptotically critical sequence defined in \cite[Definition 3.1]{DLS_2023}: a sequence $\{x_n\}_n\subset X$ it is called $s$-asymptotically critical for $V$ if $\{x_n\}_n$ has no convergent subsequence and 
    \[\sum_{n=1}^\infty s[V](x_n)d(x_n,x_{n+1})< + \infty.\]
    Since $s[V]=s[W]=\ell$, the $s$-asymptotically critical sequences for $V$ coincide with the ones for $W$.
    Let $\{x_n\}_n$ be an $s$-asymptotically critical sequence for $V$.  
    By \cite[Remark 3.2]{DLS_2023}, we have \[\liminf_{n\to\infty}s[V](x_n)=0,\quad \text{and so,}\quad \liminf_{n\to\infty} \ell(x_n)=0.\]
    Hence, by definition of $\ell$, $\{x_n\}_n$ has a subsequence converging to $0$.
    This contradicts the fact that $\{x_n\}_n$ has no accumulation points. 
    Thus, there is no $s$-asymptotically critical sequences for $V$ (and neither for $W$).
    Now, as a direct consequence of \cite[Theorem 3.6]{DLS_2023} we obtain that $V=W$.

    We note that an alternative proof can be done by combining~\cite[Theorem 4.5]{GHN_2015} and~\cite[Theorem~1.2]{LSZ_2021}. 
\end{proof}

Now we can proceed with the proof of Example~\ref{thm: counterexample}.
\begin{proof}[Proof of Example~\ref{thm: counterexample}]
    We show that the metric space $X$ and the function $\ell$ defined above are such that ${v_0:=\lim_{R\to 0} v_R}$ is not a solution of the equation~\eqref{eqn: Ls}.
    Let $R > 0$. 
    Then, for any $n\in \N$ such that $nR > 1$, we have
    \begin{align*} 
        v_R(1)&=\inf \left\{ \sum_{i=0}^{k-1} \ell(z_i)d(z_i,z_{i+1}) : \{z_i\}_i\subset X,~z_0=1,~z_k=0,~d(z_i,z_{i+1})<R \right\}\\
        &\leq \inf \left\{ \sum_{i=0}^{k-1} x_i|x_i-x_{i+1}| : \{(x_i,n)\}_{i=1}^{k-1}\subset P_n, x_0=1,x_k=0,~|x_i-x_{i+1}|<R \right\}.
    \end{align*}
    Since $P_n$ is a $\frac{1}{n}$-net of $S_n$, (P4), and since the function $h(x)=x$ is Riemann integrable in $[0.1]$, we get
    \[v_R(1)\leq \liminf_{n\to\infty} \inf\left\{\sum_{i=0}^{k-1} x_i|x_i-x_{i+1}| :\{(x_i,n)\}_{i=1}^{k-1}\subset P_n, x_0=1,x_k=0,~|x_i-x_{i+1}|<R \right \} \leq \int_0^1xdx=\dfrac{1}{2}. \]
    Therefore, $v_0(1) = \lim_{R\to0} v_R(0)\leq \frac{1}{2}$. 
    On the other hand, by Proposition~\ref{prop: uniqueness local} we know that the only non negative solution of \eqref{eqn: Ls} is the function $V$ defined by~\eqref{eq: solution local}. 
    Due to (P3), we have that
    \[V(x):= \inf \left\{\int_0^{T} \ell(\gamma(t))dt:\textup{Lip}(\gamma)\leq 1, \gamma(0)=1,\gamma(T)=0 \right\} = 1.\]
    Therefore, $v_0\neq V$, and thus, $v_0$ is not a solution of~\eqref{eqn: Ls}.
\end{proof}

\subsection{Integration formula}\label{subsec: integration}

In this subsection, we explore the case in which the lsc function $\ell$ admits the value $+\infty$. 
The main idea is to apply our previous results to the subset $X_f = [\ell< + \infty] \subset X$ and then to consider its lsc envelope. 
As a byproduct of our result, we provide an integration formula for lsc bounded from below functions defined on a complete metric space in terms of their global slopes.
Some further consequences of our integration formula are also established at the end  of the subsection.

\medskip

To fix notation, for a function $f:X \to \R\cup\{+\infty\}$, we denote by $\mathrm{lsc}(f):X\to\R\cup\{+\infty\}$ its lsc envelope, that is, 

\[\mathrm{lsc}(f)(x):= \liminf_{y \to x} f(y), \text{ for all } x \in X.
\]
Observe that if $f$ is lsc on $\mathrm{dom}~f$, then $f = \mathrm{lsc}(f)$ on $\mathrm{dom}~f$. 

%In this section, we show that  equation~\eqref{eqn: G0} admits a solution in the case $\ell: X \to [0, + \infty]$ a proper lsc function. This leads to an integration formula for proper lsc functions in complete metric spaces via their global slopes. As a direct consequence, we obtain determination results associated with the global slope. 
In the language of the Global slope equation, we have the following existence result.
\begin{proposition}\label{thm: ell infty}
Let $X$ be a metric space and let $\ell:X\to[0, + \infty]$ be a lsc function satisfying assumption~\eqref{eq: hyp}. 
Then, the equation~\eqref{eqn: G0} admits a lsc solution.
\end{proposition}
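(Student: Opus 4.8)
The plan is to transfer the problem to the set $X_f := [\ell < +\infty]$, solve the global Eikonal equation there using the already-established existence result, and then take the lsc envelope back on $X$. First, I would observe that $X_f$ is nonempty: indeed, the sequence $\{\bar x_n\}_n$ provided by assumption~\eqref{eq: hyp} satisfies $\ell(\bar x_n) \to 0$, so $\bar x_n \in X_f$ for $n$ large, and moreover the same sequence witnesses that the restriction $\ell|_{X_f} : X_f \to [0,+\infty)$ is a (finite-valued) lsc function on the metric space $(X_f, d)$ with $\inf_{X_f} \ell|_{X_f} = 0$ and satisfying~\eqref{eq: hyp} in $X_f$. Hence Corollary~\ref{thm.exists.full}$(ii)$ (equivalently Proposition~\ref{prop.solu_series}) applies: there is a function $w \in \LSC(X_f)$ with $\inf_{X_f} w = 0$ and $G_{X_f}[w] = \ell|_{X_f}$ pointwise on $X_f$, where $G_{X_f}$ denotes the global slope computed using only points of $X_f$. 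A natural choice is $w = u_0$ given by the series formula~\eqref{eqn.series} with base space $X_f$.

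Next I would extend $w$ to $X$ by setting $\hat w := +\infty$ on $X \setminus X_f$ and $\hat w := w$ on $X_f$, and then define $u := \mathrm{lsc}(\hat w)$ on all of $X$. The claim is that $u$ solves~\eqref{eqn: G0} on $X$. Since $\hat w \geq 0$ and $\inf_{X_f} w = 0$, we immediately get $\inf_X u = 0$, and $u$ is lsc by construction. It remains to verify $G[u] = \ell$ pointwise on $X$, which I would split according to whether $x \in X_f$ or not.

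For $x \notin X_f$ we have $\ell(x) = +\infty$; here the key point is that $u(x) = \liminf_{y\to x} \hat w(y)$, and one must check $G[u](x) = +\infty$. If $u(x) = +\infty$ this is immediate by the convention in the definition of $G$. If $u(x) < +\infty$, then there are points $y_k \to x$ in $X_f$ with $w(y_k) \to u(x)$; using that $\ell(y_k) = G_{X_f}[w](y_k) \to +\infty$ (because $\ell$ is lsc at $x$ with value $+\infty$) together with the defining supremum of the global slope, one produces, for each $k$, a point $z_k \in X_f$ with $(w(y_k) - w(z_k))/d(y_k, z_k)$ large; comparing $u(x)$, $u(y_k) = w(y_k)$ and $u(z_k) \le w(z_k)$ and letting $k \to \infty$ forces $G[u](x) = +\infty = \ell(x)$. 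This is the delicate case and the main obstacle: one has to be careful that taking the lsc envelope does not decrease the slope too much, i.e. that $G_X[u] \geq G_{X_f}[w]$ on $X_f$ and that $u$ inherits the "infinite slope" at boundary points of $X_f$. For $x \in X_f$, the inequality $G[u](x) \le G_{X_f}[w](x) = \ell(x)$ follows because $w$ is a subsolution on $X_f$ hence satisfies $w(y) \ge w(x) - \ell(x) d(x,y)$ for all $y \in X_f$, and by lsc this descent inequality passes to $u$ on all of $X$ (points outside $X_f$ contribute $+\infty$ and do not affect the $(\cdot)_+$ quotient in a harmful direction since $u \le \hat w$). The reverse inequality $G[u](x) \ge \ell(x)$ on $X_f$ follows from $G[u](x) \ge G_{X_f}[u|_{X_f}](x)$ together with $u|_{X_f} = w$ and $G_{X_f}[w](x) = \ell(x)$; here one uses that for $x \in X_f$ with $\ell(x)>0$ the defining near-optimal competitor $y \in X_f$ for $G_{X_f}[w](x)$ is also available in $X$. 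Assembling the two cases yields $G[u] = \ell$ on $X$, so $u$ is the desired lsc solution, completing the proof.
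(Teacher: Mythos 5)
Your overall strategy is exactly the paper's: restrict to $X_f=[\ell<+\infty]$, take the Perron solution $w$ there, extend by $+\infty$ and pass to the lsc envelope $u$. The treatment of $x\in X_f$ and of $x\notin\overline{X_f}$ is correct. The gap sits precisely in the case you yourself flag as delicate, $x\in\overline{X_f}\setminus X_f$ with $u(x)<+\infty$: producing, for each $y_k\to x$ with $w(y_k)\to u(x)$, a \emph{single} point $z_k$ with $\bigl(w(y_k)-w(z_k)\bigr)/d(y_k,z_k)\geq M_k\to+\infty$ does not force $G[u](x)=+\infty$. The best estimate this yields is
\[
\frac{u(x)-u(z_k)}{d(x,z_k)}\;\geq\;\frac{M_k\,d(y_k,z_k)-\bigl(w(y_k)-u(x)\bigr)}{d(x,y_k)+d(y_k,z_k)},
\]
and nothing prevents the near-optimal competitor $z_k$ in the supremum defining $G_{X_f}[w](y_k)$ from satisfying $d(y_k,z_k)\ll d(x,y_k)$ with $M_k\,d(y_k,z_k)$ bounded or even tending to $0$; in that regime the right-hand side stays bounded and no blow-up of the difference quotient at $x$ follows. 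A large slope at $y_k$ only guarantees a steep descent over an uncontrolled, possibly infinitesimal, displacement, and this is exactly what the lsc envelope cannot be trusted to preserve.

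The missing ingredient is a lower bound on the displacement of the descent, and the paper gets it by iterating the step rather than taking one. Fix $R>0$ and, by lower semicontinuity of $\ell$, a $\delta>0$ with $\ell>R$ on $B_\delta(x)$; for $y\in B_{\delta/2}(x)\cap X_f$ take a $\rho$-optimal sequence $\{x_n^y\}_n$ in the series defining $w(y)$ and follow it up to the \emph{first} index $N$ with $x_N^y\notin B_\delta(x)$ (such $N$ exists since $\ell(x_n^y)\to 0$). Then $d(y,x_N^y)\geq\delta/2$ is bounded away from $0$, while the triangle inequality and $\ell>R$ on $B_\delta(x)$ give $w(y)+\rho\geq R\,d(y,x_N^y)+w(x_N^y)$; comparing $u(x)$ with $u(y)$ up to $\varepsilon$ then yields a difference quotient between $x$ and $x_N^y$ of at least $R-(\varepsilon+\rho)/\delta$ (up to a factor $d(y,x_N^y)/d(x,x_N^y)\to 1$), and letting $y\to x$, $\varepsilon,\rho\to0$ and finally $R\to+\infty$ gives $G[u](x)=+\infty$. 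Your single-step argument should be replaced by this chained-descent estimate; the rest of your proof stands.
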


\begin{proof}
%Let us denote by $\mathrm{dom}~\ell:=\{x\in X:~\ell(x)\in \R\}$. 
Since $\ell$ satisfies \eqref{eq: hyp}, $X_f:=[\ell< + \infty]$ is a nonempty set. 
Consider the function ${u_f: X \to \R}$ defined by
\[u_f(x):=\inf \left\{\sum_{n=0}^\infty \ell(x_n)d(x_n,x_{n+n}):~\{x_n\}_n\subset X,~x_0=0, \lim_{n\to\infty}\ell(x_n)=0 \right\},~\text{for all }x\in X.\]
Observe that $u_f(x) < + \infty$ if and only if $x \in X_f$.
Moreover, $u_f \vert_{X_f}$ is the Perron solution of equation~\eqref{eqn: G0} on $X_f$ associated with $\ell \vert_{X_f}$. 
Let $u:X\to \R\cup\{+\infty\}$ be the lsc envelope of $u_f$, i.e. $u=\mathrm{lsc}(u_f)$.\\

\textbf{Claim:} \textit{$u$ is a solution of \eqref{eqn: G0}.} 
Let $x\in X$.
If $x\in X_f$, since $u(x) = \mathrm{lsc}(u_f)(x)$, it readily follows that 
\[G[u](x)=\sup_{y\in X\setminus \{x\}} \dfrac{\max\{u(x)-u(y),0\}}{d(x,y)}=\sup_{y\in X_f\setminus \{x\}} \dfrac{\max\{u(x)-u(y),0\}}{d(x,y)}=G_{X_f}[u_f \vert_{X_f}](x)=\ell(x).\]
If $x\in X\setminus \overline{X_f}$, it follows that $u=+\infty$. Thus, $G[u](x)=+\infty$.
So, we only need to analyse the case when $x\in \overline{X_f} \setminus X_f$.\\

\textbf{Case 1}: $u(x)= +\infty$. 
Then, $G[u](x)=+\infty$.
Since $x\in \overline{X_f} \setminus X_f$, we also have that $\ell(x)=+\infty$.\\

\textbf{Case 2}: $u(x)< + \infty$.
We need to show that $G[u](x)= + \infty$.
Fix $R>0$.
Since $\ell$ is lsc, there is $\delta>0$ such that $\ell(y)>R$ for all $y\in B(x,\delta)$. 
Let $\rho>0$. 
By the definition of $u_f$, for any $y\in B(x,\delta/2)\cap X_f$, there is a sequence $\{x^y_n\}_n \subset X$ such that $x^y_0=y$, $\lim_{n \to \infty} \ell(x^y_n)=0$ and
\[
u_f(y)+\rho\geq \sum_{n=0}^\infty \ell(x^y_n)d(x^y_n,x^y_{n+1}).
\]

Set $N = N(y) :=\min \{n\in\N:~x^y_n\notin B(x,\delta)\}$. 
Then, we have
\[u_f(y)+\rho\geq \sum_{n=0}^{N-1} \ell(x^y_n)d(x^y_n,x^y_{n+1})+u_f(x^y_{N})\geq Rd(y,x^y_{N})+u_f(x^y_{N}),\]
which leads to
\[\dfrac{u_f(y)-u_f(x^y_{N})}{d(y,x^y_{N})}\geq R-\dfrac{\rho}{d(y,x^y_{N})}.\]
Fix $\varepsilon>0$ and let $y\in B(x,\delta/2)$ be such that $|u(x)-u(y)|<\varepsilon$.
Recalling that $u_f =u $ on $X_f$, we get that 
\begin{equation}
    \begin{split}\label{esti.x_yK}
    \dfrac{u(x)-u(x^y_{N})}{d(x,x^y_{N})}&=\dfrac{d(y,x^y_{N})}{d(x,x^y_{N})}\left( \dfrac{u(x)-u(y)+u(y)-u(x^y_{N}) }{d(y,x^y_{N})} \right)\\
    &\geq \dfrac{d(y,x^y_{N})}{d(x,x^y_{N})}\left(
    \dfrac{-\varepsilon}{d(y,x^y_{N})}+R-\dfrac{\rho}{d(y,x^y_{N})}
    \right).
    \end{split}
\end{equation}

Notice that 
    \begin{align*}
        \lim_{y \to  x} \dfrac{d(y,x^y_{N})}{d(x, x^y_{N})} = 1 \quad \text{ and } \quad \liminf_{\substack{ y \in X_f \\ y \to x}} d(y,x^y_{N})\geq \delta.
    \end{align*}
From inequality~\eqref{esti.x_yK}, after taking limit inferior as $y\in X_f$ tends to $x$, we obtain
\[G[u](x)\geq \liminf_{\substack{y \in 
 \mathrm{dom}~\ell \\y  \to x}}\dfrac{u(x)-u(x^y_{N})}{d(x,x^y_{N})} \geq R-\dfrac{\varepsilon+\rho}{\delta}. \] 
Letting $\varepsilon$ tend to $0$ and then $\rho$ tend to $0$, we deduce that $G[u](x)\geq R$. 
Since the above conclusion holds for any $R>0$, we obtain that $G[u](x)= + \infty$.
The proof is complete.
\end{proof}

In the following theorem, we provide an integration formula for proper lsc bounded by below functions defined on a complete metric space via their global slopes. 
For the sake of convenience, for any fixed proper lsc function ${f: X \to \R}$, we denote
 	\begin{equation}\label{eq.integral}
 	\mathcal{I}[f](x) := \inf \left\{ \sum_{n = 0}^{+ \infty} G[f](x_n)d(x_n, x_{n + 1}): \{ x_n \}_n \in \mathcal{K}(x) \right\}, \text{for every } x \in X,
 	\end{equation}
where $\mathcal{K}(x) := \left\{ \{ x_n \}_n \subset X : x_0 = x \text{ and } \lim_{n \rightarrow \infty} G[f](x_n) = 0 \right\}$. Observe that if $f$ is bounded  from below, then $\mathcal{I}[f](x) < + \infty$ iff $G[f](x) < + \infty$.  

\begin{theorem}\label{corol.recovering}
Let $X$ be a complete metric space and $f: X \to \R \cup \{ + \infty \}$ be a proper lsc function with $\inf_X f = 0$. Then, one has
	\begin{align*}
		f = \mathrm{lsc}(\mathcal{I}[f]) \text{ in  } X. 
	\end{align*}
\end{theorem}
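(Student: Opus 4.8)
The plan is to apply Proposition~\ref{thm: ell infty} to the function $\ell := G[f]$, which is lsc by Proposition~\ref{prop: 22}, and which satisfies assumption~\eqref{eq: hyp} by Proposition~\ref{prop.AC-seq} (using that $f$ is proper, lsc and bounded from below, with $\inf_X f = 0$). Write $X_f := [\,G[f] < +\infty\,]$, which is nonempty and, by Remark~\ref{Ekeland.rmk}~$(ii)$, dense in $\mathrm{dom}\,f$; in fact $\mathrm{dom}\,f \subset \overline{X_f}$. Let $u := \mathrm{lsc}(\mathcal{I}[f])$. By the construction in the proof of Proposition~\ref{thm: ell infty}, $u$ is the lsc envelope of $\mathcal{I}[f]$ and is a solution of~\eqref{eqn: G0} with data $G[f]$; that is, $u$ is lsc, $\inf_X u = 0$, and $G[u] = G[f]$ pointwise on $X$. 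It therefore suffices to prove that the only lsc solution of~\eqref{eqn: G0} with data $G[f]$ is $f$ itself, i.e. $u = f$; for this I will use the uniqueness results for complete metric spaces, but some care is needed because a priori $f$ need not be a \emph{solution} of~\eqref{eqn: G0} in the exact sense (it is a supersolution, and we must check it is also a subsolution, or invoke a uniqueness theorem flexible enough to bypass that).

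First I would observe that $f$ itself is a supersolution of~\eqref{eqn: G0} with data $G[f]$: indeed $f$ is lsc, $\inf_X f = 0$, and trivially $G[f] \geq G[f]$. By Theorem~\ref{thm: existence}, $T^\infty_{0,\,G[f]} f$ is then a solution of~\eqref{eqn: G0}, and by Theorem~\ref{thm: T characterization} it equals $\sup\{v : v \text{ subsolution of }\eqref{eqn: G0}\text{ with data }G[f],\ v \le f\}$. Since $X$ is complete, Corollary~\ref{cor: TZDSL} (or, when $G[f]$ is not locally bounded, the uniqueness consequence of \cite[Theorem~1]{IZ_2023} quoted in the introduction, extended as in Proposition~\ref{thm: ell infty}) gives that~\eqref{eqn: G0} with data $G[f]$ has a \emph{unique} lsc solution — namely $u$. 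Hence $T^\infty_{0,\,G[f]} f = u$, so $u \le f$.

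For the reverse inequality $u \ge f$, the key tool is Proposition~\ref{prop.TZ}. Fix $x \in X$ with $u(x) < \infty$ (equivalently $x \in X_f$; if $u(x) = +\infty$ there is nothing to prove since then $x \notin \overline{X_f} \supset \mathrm{dom}\,f$, forcing $f(x) = +\infty$ as well). Given $\varepsilon > 0$, choose $\{x_n\}_n \in \mathcal{K}(x)$ (so $x_0 = x$ and $G[f](x_n) \to 0$) with $\mathcal{I}[f](x) + \varepsilon \ge \sum_{n\ge 0} G[f](x_n)\,d(x_n,x_{n+1})$; as in Proposition~\ref{prop.solu_series} one may assume $\{G[f](x_n)\}_n$ is decreasing, whence $\sum_n G[f](x_{n+1})\,d(x_n,x_{n+1}) < +\infty$. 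Proposition~\ref{prop.TZ} applied to $f$ then yields $\liminf_n f(x_n) = \inf_X f = 0$. Now telescoping the defining inequality $f(x_i) - f(x_{i+1}) \le G[f](x_i)\,d(x_i,x_{i+1})$ (valid since $G[f]$ is the global slope of $f$) along a subsequence with $f(x_{n_k}) \to 0$ gives $f(x) = f(x) - \lim_k f(x_{n_k}) \le \sum_{n\ge 0} G[f](x_n)\,d(x_n,x_{n+1}) \le \mathcal{I}[f](x) + \varepsilon$. Letting $\varepsilon \to 0$ gives $f(x) \le \mathcal{I}[f](x)$ for all $x \in X$, and since $f$ is lsc, $f \le \mathrm{lsc}(\mathcal{I}[f]) = u$. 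Combining the two inequalities yields $f = u$, which is the claim.

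The main obstacle I anticipate is the subtle interplay between $f$ merely being a \emph{supersolution} of~\eqref{eqn: G0} versus being its unique lsc solution: one must confirm either that $f$ is in fact a solution (i.e. also a subsolution, which is essentially the content of the $u \ge f$ argument above via Proposition~\ref{prop.TZ}) or route everything through the $T^\infty$-projection plus the completeness-based uniqueness theorem, and make sure the version of uniqueness invoked covers the case where $G[f]$ takes the value $+\infty$ — precisely the situation that Proposition~\ref{thm: ell infty} was built to handle. Once that bookkeeping is set up, the two inequalities are routine.
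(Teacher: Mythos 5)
Your argument for the inequality $f \leq \mathrm{lsc}(\mathcal{I}[f])$ is correct and is in fact a different route from the paper's: you pick a near-optimal chain in $\mathcal{K}(x)$, pass to the decreasing rearrangement as in Proposition~\ref{prop.solu_series}, invoke Proposition~\ref{prop.TZ} to get $\liminf_n f(x_n)=0$, and telescope the trivial bound $f(x_i)-f(x_{i+1})\leq G[f](x_i)d(x_i,x_{i+1})$. The paper instead obtains this inequality from Perron maximality of $\mathcal{I}[f]\vert_D$ on $D=\mathrm{dom}\,G[f]$, which forces it to prove the nontrivial Claim that $G_D[f\vert_D]=G[f]$ on $D$ (via Remark~\ref{Ekeland.rmk}$(ii)$); your telescoping avoids that entirely and is arguably cleaner.

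The genuine gap is in the reverse inequality $\mathrm{lsc}(\mathcal{I}[f])\leq f$, which is where the real content of the theorem sits. You reduce it to a uniqueness statement for \eqref{eqn: G0} with data $\ell=G[f]$, but no such statement is available at this point: Corollary~\ref{cor: TZDSL} requires $\ell$ locally bounded, which $G[f]$ need not be; the extension to $[0,+\infty]$-valued data is Corollary~\ref{corol.unique.improve}, which appears \emph{after} and rests on the theorem you are proving; and falling back on \cite[Theorem 1]{IZ_2023} is exactly the determination result that the paper re-derives as Corollary~\ref{corol.deter.global} \emph{from} this theorem --- invoking it here makes the theorem a one-line triviality ($G[u]=G[f]$, $\inf u=\inf f=0$, hence $u=f$) and renders your first inequality redundant, which is clearly not the intended logical architecture. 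Moreover, even the step ``$f$ is a supersolution, so $T^\infty f$ is a solution'' silently uses Theorem~\ref{thm: existence} with $+\infty$-valued data, and after restricting to $D$ the supersolution property of $f\vert_D$ again requires the unproved identity $G_D[f\vert_D]=G[f]$ on $D$. The missing idea is the paper's Remark~\ref{Ekeland.rmk}$(i)$: for each $\sigma\in(0,1)$ and $x\in D$, Ekeland's principle produces a chain $\{z_n\}_n\in\mathcal{K}(x)$ along which $(1-\sigma)G[f](z_n)d(z_n,z_{n+1})\leq f(z_n)-f(z_{n+1})$, so that $\mathcal{I}[f](x)\leq \frac{1}{1-\sigma}\bigl(f(x)-f(z_N)\bigr)+\sum_{n>N}G[f](z_n)d(z_n,z_{n+1})$, and letting $N\to\infty$, $\sigma\to 0$ gives $\mathcal{I}[f]\leq f$ on $D$ directly, with no appeal to uniqueness. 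You flagged this obstacle yourself but did not close it, so the proof as written is incomplete.
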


\begin{proof}Thanks to Remark~\ref{Ekeland.rmk} ~(ii), we note that $\mathrm{dom}~f \subset \overline{\mathrm{dom}}~G[f]$.  First, we prove that $f \leq \mathrm{lsc}(\mathcal{I}[f])$ in $X$. Denote $D:= \mathrm{dom}~G[f]$. We recall that $\mathcal{I}[f] \big\vert_D$ is the Perron solution of the equation
	\begin{equation}\label{S[f].eqn}
		\begin{dcases}
			G_D[u](x) = G[f](x), \, x \in D, \\
			\inf_D u = 0,
		\end{dcases}
	\end{equation}
where $G_D$ is the global slope operator for functions defined on $D$. 
Notice that $\inf_X f = \inf_D f = 0$.

\medskip

\textbf{Claim}: $G_D[f \big\vert_D](x) = G[f](x)$ for every $x \in D$.

\medskip
For the sake of brevity, we write $G_D[f]$ instead of $G_D[f \big\vert_D]$. 
Reasoning towards a contradiction, assume that there is $\bar x \in D$ such that $G_D[f](\bar x) \neq G[f](\bar x)$. Therefore, $G_D[f](\bar x) < G[f](\bar x)$. 
Using the definition of global slope, there exists $\bar y \in X \setminus D$ such that
	\begin{align*}
		\dfrac{f(\bar x) - f(\bar y)}{d(\bar x, \bar y)} > G_D[f](\bar x),
	\end{align*}
which leads to
	\begin{equation}\label{reco.01}
		f(\bar y) < \liminf_{\substack{y \to \bar y \\ y \in D}} f(y). 
	\end{equation}
This contradicts Remark~\ref{Ekeland.rmk}~(ii), which asserts that there is always a sequence $\{ y_n \}_n \subset D$ such that ${y_n \xrightarrow{n \to \infty} y}$ and $\textstyle\lim_{n \to \infty} f(y_n) \leq f(y)$. This finishes the proof of the claim.

\medskip

So, the function $f$ satisfies
	\begin{equation}\label{f.G_D}
		\begin{dcases}
			G_D[f](x) = G[f](x), \, x \in D, \\
			\inf_D f = 0.
		\end{dcases}	
	\end{equation}	 
Since $\mathcal{I}[f] \big\vert_D$ is the Perron solution of~\eqref{S[f].eqn}, it follows that $f \leq \mathcal{I}[f]$ on $D$. Further, since $f$ is lsc and $f=\mathrm{lsc}(\mathcal{I}[f])= \infty$ on $X\setminus \overline{D}$, we obtain $f \leq \mathrm{lsc}(\mathcal{I}[f])$ in $X$. 

\medskip

It remains to show that $\mathrm{lsc}(\mathcal{I}[f]) \leq f$. Fix $\sigma \in (0, 1)$ and $x \in D$. Applying Remark~\ref{Ekeland.rmk}(i) to the function $f$ at $x$ and $\sigma$, there exists a sequence $\{ z_n \}_n$ starting at $x$ that satisfies
	\begin{equation}\label{eke.sum}
		\lim_{n \to \infty} G[f](z_n) = 0, \quad \sum_{n = 0}^{\infty} G[f](z_n)d(z_n, z_{n + 1}) < + \infty,
	\end{equation}
and
	\begin{equation}\label{eke.differ}
		(1 - \sigma)G[f](z_n)d(z_n, z_{n + 1}) \leq f(z_n) - f(z_{n + 1}) \text{ for every } n \geq 0. 
	\end{equation}
Moreover, up to a subsequence, we can and shall assume that $\{ z_n \}_n$ satisfies~\eqref{eke.sum}--\eqref{eke.differ} and $\{G[f](z_n)\}_n$ is decreasing. 
Indeed, we can construct such a subsequence as we did in the proof of~\Cref{prop.solu_series}.

\medskip

Applying~\Cref{prop.TZ}, one has
$\liminf_{n \to \infty} f(z_n) = \inf_X f = 0.$
By the definition of $\mathcal{I}[f]$, for any $N \in \N$, we directly obtain
	\begin{equation}
		\begin{split}
			\mathcal{I}[f](x) \leq & \sum_{n = 0}^{+ \infty} G[f](z_n)d(z_n, z_{n + 1}) \\
			\leq & ~ \dfrac{1}{1 - \sigma} \sum_{n = 0}^N (f(z_n) - f(z_{n + 1})) + \sum_{n= N + 1}^\infty G[f](z_n)d(z_n, z_{n + 1}) \\
			\leq & ~ \dfrac{1}{1 - \sigma} (f(x) - f(x_N)) + \sum_{n= N + 1}^\infty G[f](z_n)d(z_n, z_{n + 1}).
		\end{split}
	\end{equation}
Taking first the limit inferior as $N$ tends to $+\infty$ and then sending $\sigma$ to $0$, we infer that $\mathcal{I}[f](x) \leq f(x)$. 
Hence, we get $\mathrm{lsc}(\mathcal{I}[f]) \leq f$ in $D$. 
\medskip 

Fix $x \in \mathrm{dom}~f \setminus D$, by Remark~\ref{Ekeland.rmk}(i), there exists a sequence $\{ x_n \}_n \subset D$, convergent to $x$, such that $\textstyle\lim_{n \to \infty} f(x_n) \leq f(x)$. 
Using the fact that $\mathcal{I}[f] \leq f$ in $D$, we obtain
	\begin{align*}
		\mathrm{lsc}(\mathcal{I}[f])(x) = \liminf_{\substack{y \to x \\ y \in D}} \mathcal{I}[f](y) \leq \liminf_{\substack{y \to x \\ y \in D}} f(y) \leq \lim_{n \to \infty} f(x_n) \leq f(x). 
	\end{align*}
So $\mathrm{lsc}(\mathcal{I}[f]) \leq f$ in $\mathrm{dom}~f$, which completes the proof. 
\end{proof}
% \hrulefill\\
\begin{remark}\normalfont
    Up to the best of our knowledge,~\Cref{corol.recovering} can be applied to lsc convex functions which are bounded from below to obtain an integration formula using less information than the one given by  Rockafellar in~\cite[Theorem 1]{Rock_66}. 
    Indeed, let $X$ be a Banach space and $f:X\to \R\cup\{+ \infty\}$ be a lsc convex function which is bounded from below.
    Then~\Cref{corol.recovering} implies that $f= \mathrm{lsc}(\mathcal{I}[f])+c$, where $c\in \R$ . 
    On the other hand, if $\partial f$ denotes the convex subdifferential of $f$, since $f$ is convex, it is known that $G[f](x)=\inf\{\|x^*\|:~x^*\in \partial f(x)\}$ when $\partial f(x)\neq \emptyset$ and $+ \infty$ otherwise, see \cite[Proposition 1.4.4]{AGS_2008}.  
\end{remark}

Also, we immediately recover the determination result due to Rockafellar for convex functions which are bounded from below.

\begin{corollary}[Rockafellar's determination result]\label{corol.rocka}
Let $X$ be a Banach space and $f, g: X \to \R \cup \{ + \infty \}$ be convex proper lsc bounded from below functions. 
If $\partial f(x) = \partial g(x)$ for every $x \in X$, then $f = g + c$ in $X$, where $c\in \R$. 
\end{corollary}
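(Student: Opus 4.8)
The plan is to derive Rockafellar's determination result as an immediate consequence of the integration formula in Theorem~\ref{corol.recovering}, combined with the classical description of the global slope of a convex function as the minimal norm of its subgradients. First I would normalize the two functions: since $f$ and $g$ are proper and bounded from below, the numbers $a := \inf_X f$ and $b := \inf_X g$ are finite, and the functions $\tilde f := f - a$ and $\tilde g := g - b$ are still convex, proper, lsc and bounded from below, now with $\inf_X \tilde f = \inf_X \tilde g = 0$. Adding a constant does not affect the convex subdifferential, so $\partial \tilde f(x) = \partial f(x) = \partial g(x) = \partial \tilde g(x)$ for every $x \in X$.

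Next I would invoke the identity $G[h](x) = \inf \{ \|x^*\| : x^* \in \partial h(x) \}$ whenever $\partial h(x) \neq \emptyset$, and $G[h](x) = + \infty$ otherwise, which holds for any proper lsc convex function $h$ on a Banach space (see \cite[Proposition 1.4.4]{AGS_2008}; recall also that $G[h] = s[h]$ in the convex case, as noted in the introduction). Applying this to $h = \tilde f$ and $h = \tilde g$ and using the pointwise equality $\partial \tilde f = \partial \tilde g$, one obtains $G[\tilde f](x) = G[\tilde g](x)$ for every $x \in X$; in particular the two functions share the same effective domain of global slope.

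The key structural observation is that the functional $\mathcal{I}[\cdot]$ defined in~\eqref{eq.integral} depends on a function only through its global slope: $\mathcal{I}[h](x)$ is the infimum of the series $\sum_{n} G[h](x_n)\, d(x_n, x_{n+1})$ over sequences $\{ x_n \}_n$ with $x_0 = x$ and $G[h](x_n) \to 0$. Hence $G[\tilde f] = G[\tilde g]$ forces $\mathcal{I}[\tilde f] = \mathcal{I}[\tilde g]$, and therefore $\mathrm{lsc}(\mathcal{I}[\tilde f]) = \mathrm{lsc}(\mathcal{I}[\tilde g])$. Since a Banach space equipped with its norm is a complete metric space, Theorem~\ref{corol.recovering} applies to both $\tilde f$ and $\tilde g$, giving $\tilde f = \mathrm{lsc}(\mathcal{I}[\tilde f]) = \mathrm{lsc}(\mathcal{I}[\tilde g]) = \tilde g$ on $X$. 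Unwinding the normalization, $f - a = g - b$, i.e. $f = g + c$ with $c = a - b$.

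I expect no serious obstacle here once Theorem~\ref{corol.recovering} is available; the only points requiring care are the justification that the global slope of a convex function coincides with the minimal norm of its subgradients (this is where convexity and the Banach-space structure genuinely enter), and the elementary but crucial remark that $\mathcal{I}[\cdot]$ is a functional of the global slope alone, so that equality of subdifferentials propagates through the integration formula.
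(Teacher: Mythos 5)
Your proposal is correct and follows essentially the same route as the paper: deduce $G[f]=G[g]$ from $\partial f=\partial g$ via the minimal-norm-of-subgradients formula for the global slope of a convex function, observe that $\mathcal{I}[\cdot]$ depends only on the global slope, and apply Theorem~\ref{corol.recovering}. Your treatment is in fact slightly more careful than the paper's, since you explicitly normalize $f$ and $g$ to have infimum zero before invoking the integration formula, whereas the paper absorbs this into the constants $c_1, c_2$.
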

\begin{proof}
    Indeed, since $f$ and $g$ are lsc convex functions and $\partial f=\partial g$, we deduce that $G[f]=G[g]$. 
    \Cref{corol.recovering} readily implies that there are $c_1,c_2\in \R$ such that
    \[f+c_1 = \mathrm{lsc}(\mathcal{I}[f])=\mathrm{lsc}(\mathcal{I}[g])=g+c_2.\]
\end{proof}

Even more, we also recover the following result from \cite{IZ_2023}.

\begin{corollary}\label{corol.deter.global}{\normalfont \cite[Theorem 1]{IZ_2023}}
Let $X$ be a complete metric space and $f, g: X \to \R \cup \{ + \infty \}$ be proper lsc functions. Assume that $g$ is bounded from below.
Then, the following assertions hold true:
	\begin{itemize}
		\item[$(i)$] if $G[f](x) \leq G[g](x)$ for every $x \in X$, then one has
			\begin{align*}
				f - \inf_X f \leq g - \inf_X g \text{ in } X;
			\end{align*}
		\item[$(ii)$] $G[f](x) = G[g](x)$ for every $x \in X$ if and only if there exists a constant $c \in \R$ such that $f = g + c$.
	\end{itemize}
\end{corollary}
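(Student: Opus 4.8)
## Proof proposal for Corollary~\ref{corol.deter.global}

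The plan is to deduce both assertions from the integration formula in Theorem~\ref{corol.recovering}, normalizing the functions so that their infimum is $0$. First I would handle assertion $(i)$. Set $\tilde f := f - \inf_X f$ and $\tilde g := g - \inf_X g$, so that both are proper lsc with $\inf_X \tilde f = \inf_X \tilde g = 0$; note $\tilde g$ is bounded from below by hypothesis, while $\tilde f$ need not be. The key observation is that the global slope is translation-invariant, so $G[\tilde f] = G[f] \le G[g] = G[\tilde g]$ pointwise, and in particular $\mathrm{dom}\,G[\tilde f] \subset \mathrm{dom}\,G[\tilde g] =: D$ and $\mathrm{dom}\,\tilde f \subset \overline{\mathrm{dom}}\,G[\tilde f]$ by Remark~\ref{Ekeland.rmk}(ii). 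The monotonicity $G[\tilde f] \le G[\tilde g]$ together with the fact that both $\mathcal{I}[\tilde f]$ and $\mathcal{I}[\tilde g]$ are defined as infima over discrete paths of sums $\sum G[\cdot](x_n)d(x_n,x_{n+1})$ (with the respective admissible-path classes $\mathcal{K}_{\tilde f}(x) \supset \mathcal{K}_{\tilde g}(x)$, since a path making $G[\tilde g]$-values vanish also makes $G[\tilde f]$-values vanish) gives directly $\mathcal{I}[\tilde f](x) \le \mathcal{I}[\tilde g](x)$ for every $x \in X$; taking lsc envelopes preserves this inequality. By Theorem~\ref{corol.recovering} applied to $\tilde g$ we have $\mathrm{lsc}(\mathcal{I}[\tilde g]) = \tilde g$ in $X$. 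For $\tilde f$ we cannot directly invoke Theorem~\ref{corol.recovering} (it assumes bounded from below), but we only need the inequality $\tilde f \le \mathrm{lsc}(\mathcal{I}[\tilde f])$, which is exactly the first half of the proof of Theorem~\ref{corol.recovering} — and a careful reading shows that half uses only that $\mathcal{I}[f]\big\vert_D$ is the Perron solution of the equation~\eqref{S[f].eqn} (which requires assumption~\eqref{eq: hyp} on $G[f]$, guaranteed here since $\tilde f$ is proper lsc on the complete space $X$, hence $G[\tilde f]$ automatically admits a sequence as in Proposition~\ref{prop.AC-seq} once we know $\mathrm{dom}\,G[\tilde f]\ne\emptyset$, which follows from $G[\tilde f]\le G[\tilde g]$ and $\mathrm{dom}\,G[\tilde g]\ne\emptyset$). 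Combining, $\tilde f \le \mathrm{lsc}(\mathcal{I}[\tilde f]) \le \mathrm{lsc}(\mathcal{I}[\tilde g]) = \tilde g$, which is assertion $(i)$.

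For assertion $(ii)$, the backward implication is trivial: if $f = g + c$ then $G[f] = G[g]$ pointwise by definition of the global slope. For the forward implication, assume $G[f] = G[g]$ everywhere. Then $G[f] \le G[g]$ and $G[g] \le G[f]$; since $g$ is bounded from below, applying $(i)$ in the direction "$G[f]\le G[g]$" gives $f - \inf_X f \le g - \inf_X g$. In particular $f$ is bounded from below (it is dominated by the bounded-from-below function $g - \inf_X g + \inf_X f$ and is $\ge \inf_X f$), so we may now apply $(i)$ in the reverse direction "$G[g] \le G[f]$" with the roles of $f$ and $g$ exchanged, obtaining $g - \inf_X g \le f - \inf_X f$. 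The two inequalities yield $f - \inf_X f = g - \inf_X g$, i.e. $f = g + c$ with $c = \inf_X f - \inf_X g \in \R$.

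The main obstacle I anticipate is the mild technical point in assertion $(i)$ that $f$ itself is not assumed bounded from below, so Theorem~\ref{corol.recovering} is not directly applicable to $f$: one must verify that the "$\le$" half of that theorem's proof goes through under the weaker hypothesis, i.e. that $\mathcal{I}[f]\big\vert_D$ is still the Perron solution of~\eqref{S[f].eqn} on $D:=\mathrm{dom}\,G[f]$ and that the claim $G_D[f\big\vert_D] = G[f]$ on $D$ (proved there via Remark~\ref{Ekeland.rmk}(ii), which needs only properness and lower semicontinuity of $f$, not boundedness) remains valid. Once that is checked, everything else is a bookkeeping exercise with translation-invariance of $G[\cdot]$ and monotonicity of the infimum defining $\mathcal{I}[\cdot]$. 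One should also take a moment to confirm that when $\mathrm{dom}\,G[g] = \emptyset$ the statement is vacuous or trivial (then $g$ is constant, $G[g]\equiv 0$ forces... actually $G[g]\equiv 0$ means $g$ attains its minimum everywhere, so $g$ is constant; then $G[f]\le 0$ forces $f$ constant too), so the nondegenerate case $D\ne\emptyset$ is the only one requiring the argument above.
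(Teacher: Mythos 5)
There is a genuine gap in your treatment of assertion $(i)$: you never establish that $f$ is bounded from below, i.e.\ that $\inf_X f>-\infty$. Without this, the normalization $\tilde f:=f-\inf_X f$ is not even defined (and the asserted inequality $f-\inf_X f\leq g-\inf_X g$ would be false on the nonempty set $\mathrm{dom}\,g$), so this is not a removable technicality but a claim that must be proved. You half-notice the issue when you write that ``$\tilde f$ need not be bounded from below'', but this is incompatible with your own normalization (if $\tilde f$ is defined then $\inf_X\tilde f=0$, so $\tilde f\geq 0$), and your proposed workaround --- invoking only the first half of the proof of Theorem~\ref{corol.recovering} --- does not repair it: that half compares $f\big\vert_D$ with the Perron solution of~\eqref{S[f].eqn}, and for $f\big\vert_D$ to be a subsolution in the sense of Definition~\ref{def.P-solu} one needs precisely $\inf_D f=0$, hence boundedness from below. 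The paper's proof opens with exactly the missing step: apply Proposition~\ref{prop.AC-seq} to $g$ to obtain a sequence $\{z_n\}_n$ with $\lim_{n}G[g](z_n)=0$ and $\sum_{n}G[g](z_n)d(z_n,z_{n+1})<+\infty$; since $0\leq G[f]\leq G[g]$, the same two conditions hold with $f$ in place of $g$, the telescoping estimate
\begin{equation*}
f(z_0)-f(z_N)\leq\sum_{n=0}^{N-1}\bigl(f(z_n)-f(z_{n+1})\bigr)_+\leq\sum_{n=0}^{\infty}G[f](z_n)d(z_n,z_{n+1})<+\infty
\end{equation*}
bounds $\{f(z_n)\}_n$ from below, and Proposition~\ref{prop.TZ} (applied after extracting a subsequence along which the slopes decrease, as in the proof of Proposition~\ref{prop.solu_series}) identifies $\liminf_{n}f(z_n)$ with $\inf_X f$, whence $\inf_X f>-\infty$.

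Once this is in place, the rest of your argument is correct and coincides with the paper's: both normalized functions satisfy the hypotheses of Theorem~\ref{corol.recovering}, the integration formula is monotone in the global slope (the admissible paths for $\tilde g$ form a subclass of those for $\tilde f$ and the summands are termwise smaller), and $(ii)$ follows by applying $(i)$ twice --- no ``half-theorem'' detour is needed. A minor further point: your closing discussion of the case $\mathrm{dom}\,G[g]=\emptyset$ conflates $G[g]\equiv+\infty$ with $G[g]\equiv 0$; in any event this case never occurs, since Ekeland's principle applied to the proper, lsc, bounded-from-below function $g$ on the complete space $X$ produces points of arbitrarily small global slope.
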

\begin{proof}
    $(i)$ Thanks to~\Cref{prop.AC-seq} and~\Cref{prop.TZ}, we observe that $f$ is also bounded from below. Now, it is enough to apply~\Cref{corol.recovering}, to realize that $\inf \, (f-\inf f)= \inf \, (g-\inf g)=0$ and that the integration formula~\eqref{eq.integral} is monotone with respect to the global slope.\\
    $(ii)$ It readily follows from $(i)$. 
\end{proof}

We end this paper with a uniqueness result for the equation~\eqref{eqn: G0} which complements Proposition~\ref{thm: ell infty}. 
We stress that the data $\ell:X\to[0, +\infty]$ is only assumed to be lsc and that $\inf_X \ell=0$.
Note that the following result extend
Corollary~\ref{cor: TZDSL} twofold: the solution may not be continuous (but merely lsc) inside the effective domain and the data may not be locally bounded.
Our result reads as follows.

\begin{corollary}\label{corol.unique.improve}
Let $X$  be a complete metric space and let $\ell: X \to [0, + \infty]$ be a lsc function such that $\inf_X \ell = 0$ and~\eqref{eq: hyp} is satisfied. Then, equation~\eqref{eqn: G0} admits a unique proper lsc solution $u: X \to [0, + \infty]$.  
\end{corollary}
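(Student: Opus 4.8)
The plan is to separate existence from uniqueness; existence is immediate from an earlier result, and uniqueness will follow at once from the determination theorem already recorded in this section.

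For existence, note that $\ell$ is lsc, takes values in $[0,+\infty]$, satisfies $\inf_X\ell=0$ and hypothesis~\eqref{eq: hyp}. Hence Proposition~\ref{thm: ell infty} applies verbatim and produces a proper lsc solution of~\eqref{eqn: G0} (the function $\mathrm{lsc}(u_f)$ built there over $X_f=[\ell<+\infty]$). So there is nothing new to prove for this half.

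For uniqueness, let $u,v:X\to[0,+\infty]$ be two proper lsc solutions of~\eqref{eqn: G0} with datum $\ell$. By the definition of solution one has $G[u]=\ell=G[v]$ pointwise on $X$ and $\inf_Xu=\inf_Xv=0$; in particular $u$ and $v$ are proper, lsc and bounded from below (by $0$), and $X$ is complete, so Corollary~\ref{corol.deter.global} is applicable. Applying part $(i)$ to the pair $(u,v)$, the inequality $G[u]\le G[v]$ yields $u=u-\inf_Xu\le v-\inf_Xv=v$ on $X$; exchanging the roles of $u$ and $v$ (legitimate because $u$ is bounded from below too) gives $v\le u$ on $X$. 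Hence $u=v$, proving uniqueness. Equivalently, one may invoke Theorem~\ref{corol.recovering}: since $G[u]=G[v]$, the integral functionals $\mathcal{I}[u]$ and $\mathcal{I}[v]$ coincide, whence $u=\mathrm{lsc}(\mathcal{I}[u])=\mathrm{lsc}(\mathcal{I}[v])=v$, the normalization $\inf_Xu=\inf_Xv=0$ eliminating the additive constant.

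No serious obstacle is expected here: the analytic content is already contained in Proposition~\ref{thm: ell infty} (existence when $\ell$ may be infinite) and in the determination result from~\cite{IZ_2023}. The only points deserving a word of care are that (a) the determination statements must be used in the form allowing $G[u]=\ell$ to equal $+\infty$ on $X\setminus X_f$ — which is precisely the generality in which Corollary~\ref{corol.deter.global} and Theorem~\ref{corol.recovering} are stated, since they require properness, lower semicontinuity and boundedness from below of the \emph{functions}, not of their global slopes — and (b) that any solution is automatically proper, because a solution identically equal to $+\infty$ would force $\ell\equiv+\infty$, contradicting $\inf_X\ell=0$.
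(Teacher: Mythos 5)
Your proposal is correct and follows exactly the route the paper intends (the corollary is stated without an explicit proof, immediately after the results it rests on): existence comes verbatim from Proposition~\ref{thm: ell infty}, and uniqueness from the determination result, since two proper lsc solutions $u,v\ge 0$ with $G[u]=G[v]=\ell$ differ by a constant by Corollary~\ref{corol.deter.global}$(ii)$ (or equal $\mathrm{lsc}(\mathcal{I}[\cdot])$ by Theorem~\ref{corol.recovering}), and the normalization $\inf_X u=\inf_X v=0$ kills the constant. Your two side remarks (applicability when $\ell=+\infty$ off $X_f$, and automatic properness) are also accurate.
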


\bigskip
%%%%%%%%%%%%%%%%%%%%%%%%%%%%%%%%%%%%

\noindent\rule{5cm}{1pt} \smallskip\newline\noindent\textbf{Acknowledgements.} The authors are grateful to Aris Daniilidis for several fruitful discussions and to Olivier Ley for many comments that helped to improve the presentation of the present manuscript.

\nocite{*}

%\newpage
\vspace{0.5cm}

\noindent Tr\'i Minh L\^E, Sebasti\'an TAPIA-GARC\'IA

\medskip

\noindent Institut f\"{u}r Stochastik und Wirtschaftsmathematik, VADOR E105-04
\newline TU Wien, Wiedner Hauptstra{\ss }e 8, A-1040 Wien\medskip
\newline\noindent E-mail: \{\texttt{minh.le,sebastian.tapia.garcia\}@tuwien.ac.at}
\\
\newline\noindent\texttt{https://sites.google.com/view/tri-minh-le}
\newline\noindent\texttt{https://sites.google.com/view/sebastian-tapia-garcia/}

\medskip

\noindent Research partially supported by the grants: \smallskip\newline 
Austrian Science Fund (FWF P-36344N) (Austria)

\begin{thebibliography}{1}

\bibitem{ACCT_2013}\textsc{Y. Achdou, F. Camilli, A. Cutr\`i and N. Tchou}, Hamilton-Jacobi equations constrained on networks, \emph{NoDEA Nonlinear Differential Equations Appl} \textbf{20} (2013), 413--445.

\bibitem{AF_2014} \textsc{L. Ambrosio and J. Feng}, On a class of first order Hamilton-Jacobi equations in metric spaces, \emph{J. Differ. Equations} \textbf{256}(2014), 2194--2245.

\bibitem{AGS_2013} \textsc{L. Ambrosio, N. Gigli and G. Savar\'e}, Heat flow and calculus on metric measure spaces with Ricci curvature bounded below—the compact case, \emph{Analysis and numerics of partial differential equations},  \emph{Springer INdAM Ser., \textbf{4},} 2013, 63--115.

\bibitem{AGS_2008} \textsc{L. Ambrosio, N. Gigli and G. Savar\'e}, \emph{Gradient flows in metric spaces and in the space of probability measures}, \emph{Birkh\"{a}user Verlag, Basel,} 2008.

\bibitem{BC-D_97} \textsc{M. Bardi and I. Capuzzo--Dolcetta,} \emph{Optimal control and viscosity solutions of Hamilton-Jacobi-Bellman equations}, \emph{Birkh\"{a}user Boston, Inc., Boston, MA,} 1997. 

% \bibitem{BB_2006} \textsc{P. Bernard and B. Buffoni}, The Monge problem for supercritical Ma\~n\'e potentials on compact manifolds, \emph{Adv. Math.} \textbf{207} (2006), 691--706.

% \bibitem{BB_2007} \textsc{P. Bernard and B. Buffoni}, Weak KAM pairs and Monge-Kantorovich duality, \emph{Adv. Stud. Pure Math.} \textbf{47.2} (2007), 397--420.

\bibitem{BCD_2018} \textsc{T. Z. Boulmezaoud, P. Cieutat and A. Daniilidis}, Gradient flows, second-order gradient systems and convexity, \emph{SIAM J. Optim.} \textbf{28} (2018), 2049–-2066.


\bibitem{BD_2005} \textsc{A. Briani and A. Davini}, Monge solutions for discontinuous Hamiltonians, \emph{ESAIM Control Optim. Calc. Var.} \textbf{11} (2005), 229–-251.


\bibitem{CCM_2016} \textsc{F. Camilli, R. Capitanelli and C. Marchi}, Eikonal equations on the Sierpinski gasket, \emph{Math. Ann.} \textbf{364} (2016), 1167–-1188. 

\bibitem{CGMQ_2024} \textsc{P. Cannarsa, S. Gaubert, C. Mendico and M. Quincampoix}, Analysis of the vanishing discount limit for optimal control problems in continuous and discrete time, \emph{Math. Control Relat. Fields}, doi:10.3934/mcrf.2024010.

\bibitem{C_2013} \textsc{P. Cardaliaguet}, Notes on Mean Field Games (from P.-L. Lions’ lectures at Coll\`ege de France), 2013.

\bibitem{CLM_2013} \textsc{A. Chambolle, E. Lindgren and R. Monneau}, A H\"older infinity Laplacian, \emph{ESAIM Control Optim. Calc. Var.} \textbf{18} (2012), 799-–835. 

\bibitem{CL_1983} \textsc{M. G. Crandall and P.--L. Lions}, Viscosity solutions of Hamilton-Jacobi equations, \emph{Trans. Amer. Math. Soc.} \textbf{277} (1983), 1--42.

\bibitem{CL_1985} \textsc{M. G. Crandall and P.--L. Lions}, Hamilton-Jacobi equations in infinite dimensions. I. Uniqueness of viscosity solutions, \emph{J. Funct. Anal.} \textbf{62} (1985), 379--396.

\bibitem{CL_1986} \textsc{M. G. Crandall and P--L. Lions}, Hamilton-Jacobi equations in infinite dimensions. II. Existence of viscosity solutions, \emph{J. Func. Anal.} \textbf{65} (1986), 368--405. 

\bibitem{DD_2023} \textsc{A. Daniilidis and D. Drusvyatskiy}, The slope robustly determines convex functions. \emph{Proc. Amer. Math. Soc.} \textbf{151} (2023), 4751--4756.

\bibitem{DLS_2023}\textsc{A. Daniilidis, T. M. Le and D. Salas}, Metric compatibility and determination in complete metric spaces, Math. Z. (to appear).

\bibitem{DMS_2022} \textsc{A. Daniilidis, L. Miclo and D. Salas,} Descent modulus and applications, \emph{J. Funct. Anal.} \textbf{287} (2024) (To appear).

\bibitem{DS_2022} \textsc{A. Daniilidis and D. Salas}, A determination theorem in terms of the metric slope, \emph{Proc. Amer. Math. Soc.} \textbf{150} (2022), 4325--4333.

\bibitem{DST-G_2024} \textsc{A. Daniilidis, D. Salas and S. Tapia-García}, A slope generalization of Attouch theorem, \emph{Math. Program}, https://doi.org/10.1007/s10107-024-02108-w.

\bibitem{DFIZ_2016_invent} \textsc{A. Davini, A. Fathi, R. Iturriaga and M. Zavidovique}, Convergence of the solutions of the discounted Hamilton-Jacobi equation: convergence of the discounted solutions, \emph{Invent. Math.} \textbf{206} (2016), 29--55.

\bibitem{DFIZ_2016} \textsc{A. Davini, A. Fathi, R. Iturriaga and M. Zavidovique}, Convergence of the solutions of the discounted equation:
the discrete case, \emph{Math. Z.} \textbf{284} (2016), 1021--1034.

\bibitem{E-B_2023} \textsc{S. Eriksson-Bique}, Density of Lipschitz functions in energy, \emph{Calc. Var. Partial Differential Equations} \textbf{62} (2023), 23pp.

\bibitem{EGV_2024} \textsc{F. Essebei, G. Giovannardi and S. Verzellesi}, Monge solutions for discontinuous Hamilton-Jacobi equations in Carnot groups, \emph{NoDEA Nonlinear Differential Equations Appl.} \textbf{31} (2024), Paper No. 95.


\bibitem{F_2008} \textsc{A. Fathi}, \emph{Weak KAM Theorem in Lagrangian Dynamics}, Preliminary Version Number 10, June 2008.

\bibitem{JZ_2024} \textsc{O. Jerhaoui and H. Zidani}, Viscosity solutions of Hamilton-Jacobi equations in proper $CAT(0)$ 
spaces, \emph{J. Geom. Anal.} \textbf{34} (2024), Paper No. 4. 

% \bibitem{Ekeland_1979} \textsc{I. Ekeland}, Nonconvex minimization problems, \emph{Bull. Amer. Math. Soc. (N.S.)} \textbf{1} (1979), 443--474.

% \bibitem{GS_2014} \textsc{W. Gangbo and A. \'Swi\c{e}ch}, Optimal transport and large number of particles, \emph{Discrete Contin. Dyn. Syst.} \textbf{34} (2014), 1397–-1441.

\bibitem{GS_2015} \textsc{W. Gangbo and A. \'Swi\c{e}ch}, Metric viscosity solutions of Hamilton–Jacobi equations depending on local slopes, \emph{Calc. Var.} \textbf{54} (2015), 1183-–1218.

\bibitem{GHN_2015} \textsc{Y. Giga, N. Hamamuki and A. Nakayasu,} Eikonal equations in metric spaces, \emph{Trans. Amer. Math. Soc.} \textbf{367}(2015), 49--66.

\bibitem{IMZ_2013}\textsc{C. Imbert, R. Monneau and H. Zidani}, A Hamilton-Jacobi approach to junction problems and application to traffic flows, \emph{ESAIM Control Optim. Calc. Var.} \textbf{19} (2013), 129--166.

\bibitem{I_1987} \textsc{H. Ishii}, A simple, direct proof of uniqueness for solutions of the Hamilton-Jacobi equations of Eikonal type, \emph{Proc. Amer. Math. Soc.} \textbf{100} (1987), 247--251.

\bibitem{I_1987_Perron} \textsc{H. Ishii}, Perron's method for Hamilton--Jacobi equations, \emph{Duke Math. J.} \textbf{55} (1987), 369--384.

\bibitem{IZ_2023} \textsc{M. Ivanov and N. Zlateva}, Slopes and Moreau-Rockafellar Theorem (preprint),  arXiv:2309.03505v3. 

\bibitem{L1904} \textsc{H. Lebesgue}, Leçons sur l'intégration et la recherche des fonctions primitives, Gauthier-Villars 1904.

\bibitem{LPV_1987} \textsc{P. L. Lions, G. Papanicolaou and S. R. S. Varadhan}, Homogenization of Hamilton-Jacobi equations, unpublished preprint, 1987. 

\bibitem{LSZ_2021} \textsc{Q. Liu, N. Shanmugalingam and X. Zhou,} Equivalence of solutions of eikonal equation in metric spaces, \emph{J. Differential Equations,} \textbf{272}(2021), 979--1014.

\bibitem{LSZ_2024} \textsc{Q. Liu, N. Shanmugalingam and X. Zhou,} Discontinuous eikonal equations in metric measure spaces, \emph{Trans. Amer. Math. Soc.}, https://doi.org/10.1090/tran/9294.

% \bibitem{MN_2024} \textsc{S. Makida and A. Nakayasu}, Stability of metric viscosity solutions under Hausdorff convergence. (arxiv preprint)

\bibitem{NN_2018} \textsc{A. Nakayasu and T. Namba}, Stability properties and large time behavior of viscosity solutions of Hamilton-Jacobi equations on metric spaces, \emph{Nonlinearity} \textbf{31} (2018),  5147–-5161.

\bibitem{NS_1995} \textsc{R. T. Newcomb II and J. Su},  Eikonal equations with discontinuities, \emph{Differ. Integral Equ.} \textbf{8} (1995),  1947–-1960.

\bibitem{PSV_2021} \textsc{P. P\'erez-Aros, D. Salas and E. Vilches}, Determination of convex functions via subgradients of minimal norm, \emph{Math. Program. Ser. A} {\bf 190} (2021), 561--583.

\bibitem{Rock_66} \textsc{R. T. Rockafellar}, Characterization of the subdifferentials of convex functions, \emph{Pacific J. Math.} \textbf{17} (1966), 497--510.

\bibitem{Rock_70} \textsc{R. T. Rockafellar}, On the maximal monotonicity of subdifferential mappings, \emph{Pacific J. Math.} \textbf{33} (1970), 209--216.

% \bibitem{RT_1992}\textsc{E. Rouy and A. Tourin}, A viscosity solutions approach to shape-from-shading, \emph{SIAM J. Numer. Anal.} \textbf{29} (1992), 867--884.

\bibitem{SC_2013} \textsc{D. Schieborn and F. Camilli}, Viscosity solutions of Eikonal equations on topological
networks, \emph{Calc. Var.} \textbf{46} (2013), 671–-686.

% \bibitem{S_1997} \textsc{P. E. Souganidis}, Front propagation: Theory and applications, In: \emph{Viscosity Solutions and Applications.} Lecture notes in Mathematics, Vol. 1660, \emph{Springer, Berlin, Heidelberg,} 1997.

\bibitem{TZ_2023} \textsc{L. Thibault and D. Zagrodny}, Determining functions by slopes, \emph{Commun. Contemp. Math.} \textbf{25} (2023), 30pp.


\bibitem{Z_2002} \textsc{C. Z\u{a}linescu}, \emph{Convex analysis in general vector spaces}, \emph{World Scientific Publishing Co., Inc., River Edge, NJ,} 2002.

\bibitem{Z_2010} \textsc{M. Zavidovique}, Existence of $C^{1,1}$ critical subsolutions in discrete weak KAM theory, \emph{J. Mod. Dyn.} \textbf{4} (2010), 693–-714.

\bibitem{Z_2012} \textsc{M. Zavidovique,} Strict sub-solutions and Ma\~n\'e potential in discrete weak KAM theory, \emph{Comment. Math. Helv.} \textbf{87} (2012), 1--39.

\end{thebibliography}
\end{document}